\pgfplotsset{compat=newest}
\newtheorem{theorem}{Theorem}
\newtheorem{lemma}[theorem]{Lemma}
\theoremstyle{definition}
\newtheorem{definition}{Definition}
\newtheorem{remark}{Remark}
\newcommand{\R}{ \mathbb{R} }
\newcommand{\Z}{ \mathbb{Z} }
\newcommand{\N}{ \mathbb{N} }
\newcommand{\LL}{ \mathbb{L} }
\def \bH{\mathbf{H}}
\def \bS{\mathbf{S}}
\def \bTheta{\mathbf{\Theta}}
\def \bDelta{\mathbf{\Delta}}
\def \bSigma{\mathbf{\Sigma}}
\def \bPsi{\mathbf{\Psi}}
\def \bA{\mathbf{A}}
\def \bN{\pmb{\mathcal{N}}}
\def \bU{\mathbf{U}}
\def \bu{\mathbf{u}}
\def \bv{\mathbf{v}}
\def \bW{\mathbf{W}}
\def \bpsi{\boldsymbol\psi}
\def \bbeta{\boldsymbol\eta}
\def \bz{\mathbf{z}}
\def \bO{\mathbf{O}}
\def \mN{\mathcal{N}}
\def \balpha{\alpha}
\def \be{e}
\def \bzero{0}
\def \bm{m}
\newcommand{\norm}[1]{\left\Vert #1\right\Vert}
\newcommand{\abs}[1]{\left\vert #1\right\vert}
\newcommand{\opnorm}[1]{{\left\vert\kern-0.25ex\left\vert\kern-0.25ex\left\vert #1 \right\vert\kern-0.25ex\right\vert\kern-0.25ex\right\vert}}
\newcommand{\dotproduct}[2]{\ensuremath{\left\langle #1, #2 \right\rangle}\xspace}
\newcommand{\p}{\partial}
\newcommand{\argmin}{\mathop{\mathrm{arg\,min}}}
\newcommand{\argmax}{\mathop{\mathrm{arg\,max}}}
\newcommand{\supp}{\mathop{\mathrm{supp}}}
\newcommand{\set}[1]{\left\{ #1 \right\}}
\newcommand{\wtilde}[1]{\widetilde{#1}}
\DeclareMathOperator*{\esssup}{ess\,sup}
\newcommand{\dist}[2]{\mathop{\mathrm{dist}} \left( #1, #2 \right)}
\font\dsrom=dsrom10 scaled 1200
\def \indic{\textrm{\dsrom{1}}}
\newcommand{\fa}{ \textrm{for all }}
\providecommand{\SetAlgoLined}{\SetLine}
\providecommand{\DontPrintSemicolon}{\dontprintsemicolon}
\begin{document}
\title{Sparse Wavelet Representations of Spatially Varying Blurring Operators.}

\author{Paul Escande \footnotemark[2] \and Pierre Weiss \footnotemark[3] }

\maketitle

\renewcommand{\thefootnote}{\fnsymbol{footnote}}
\footnotetext[2]{D\'epartement Math\'ematiques, Informatique, Automatique (DMIA), Institut Sup\'erieur de l'A\'eronautique et de l'Espace (ISAE), Toulouse, France,  \url{paul.escande@gmail.com} }
\footnotetext[3]{Institut des Technologies Avanc\'ees en Sciences du Vivant, ITAV-USR3505 and Institut de Math\'ematiques de Toulouse, IMT-UMR5219, CNRS and universit\'e de Toulouse, Toulouse, France, \url{pierre.armand.weiss@gmail.com}}

\begin{abstract}
Restoring images degraded by spatially varying blur is a problem encountered in many disciplines such as astrophysics, computer vision or biomedical imaging. 
One of the main challenges to perform this task is to design efficient numerical algorithms to approximate integral operators.


We introduce a new method based on a sparse approximation of the blurring operator in the wavelet domain. This method requires $\mathcal{O}\left(N \epsilon^{-d/M}\right)$ operations to provide $\epsilon$-approximations, where $N$ is the number of pixels of a $d$-dimensional image and $M\geq 1$ is a scalar describing the regularity of the blur kernel. In addition, we propose original methods to define sparsity patterns when only the operators regularity is known.

Numerical experiments reveal that our algorithm provides a significant improvement compared to standard methods based on windowed convolutions.
\end{abstract}

\textbf{Keywords:} Image deblurring, spatially varying blur, integral operator approximation, wavelet compression, windowed convolution


\section{Introduction}

The problem of image restoration in the presence of spatially varying blur appears in many domains.
Examples of applications in computer vision, biomedical imaging and astronomy are shown in Figures \ref{fig:dwarves_motion} and \ref{fig:billes} respectively. 
In this paper, we propose new solutions to address one of the main difficulties associated to this problem: the computational evaluation of matrix-vector products.

A spatially variant blurring operator can be modelled as a linear operator and therefore be represented by a matrix $\bH$ of size $N\times N$, where $N$ represents the number of pixels of a $d$-dimensional image. 
Sizes of typical images range from $N=10^6$ for small 2D images, to $N=10^{10}$ for large 2D or 3D images. 
Storing matrices and computing matrix-vector products using the standard representation is impossible for such sizes: it amounts to tera or exabytes of data/operations. 
In cases where the Point Spread Functions (PSF) supports are sufficiently small in average over the image domain, the operator can be coded as a sparse matrix and be applied using traditional approaches. 
However, in many practical applications this method turns out to be too intensive and cannot be applied with decent computing times. 
This may be due to i) large PSFs supports or ii) the need for super-resolution applications where the PSFs sizes increase with the resolution.
Spatially varying blurring matrices therefore require the development of computational tools to compress them and evaluate them in an efficient way.
 

\subsection*{Existing approaches}

To the best of our knowledge, the first attempts to address this issue appeared at the beginning of the seventies (see e.g. \cite{sawchuk_space-variant_1972}). Since then, many techniques were proposed. We describe them briefly below

\paragraph{Composition of diffeomorphisms and convolutions}
One of the first method proposed to reduce the computational complexity, is based on first applying a diffeomorphism to the image domain \cite{sawchuk_space-variant_1972, sawchuk_space-variant_1974,mcnown1994approximate,tabernero1999duality,estatico2013shift} followed by a convolution using FFTs and an inverse diffeomorphism. The diffeomorphism is chosen in order to transform the spatially varying blur into an invariant one. This approach suffers from two important drawbacks:
\begin{itemize}
\item first it was shown that not all spatially varying kernel can be approximated by this approach \cite{mcnown1994approximate}, 
\item second, this method requires good interpolation methods and the use of Euclidean grids with small grid size in order to correctly estimate integrals.
\end{itemize}

\paragraph{Separable approximations}
Another common idea is to approximate the kernel of the operator by a separable one that operates in only one dimension. The computational complexity of a product is thus reduced to $d$ applications of one-dimensional operators. It drastically improves the performance of algorithms. For instance, in 3D fluorescence microscopy, the authors of \cite{preza2004depth,maalouf2011fluorescence,hadj2012space, zhang2007gaussian} proposed to approximate PSFs by anisotropic Gaussians and assumed that the Gaussian variances only vary along one direction (e.g., the direction of light propagation). The separability assumption implies that both the PSF and its variations are separable. Unfortunately, most physically realistic PSFs are not separable and do not vary in a separable manner (see e.g., Figure \ref{fig:nonSeparablePSF}). This method is therefore usually too crude.

\paragraph{Wavelet or Gabor multipliers}
Some works \cite{CMY-Foveation, escande2012spatially,feichtinger2003first,hrycak2011practical} proposed to approximate blurring operators $\bH$ using operators diagonal in wavelet bases, wavelet packet or Gabor frames.  
This idea consists of defining an approximation $\wtilde \bH$ of kind $\wtilde \bH = \bPsi \bSigma \bPsi^{*}$, where $\bPsi^{*}$ and $\bPsi$ are wavelet or Gabor transforms and $\bSigma$ is a diagonal matrix.
These diagonal approximations mimic the fact that shift-invariant operators are diagonal in the Fourier domain. 
These approaches lead to fast $\mathcal{O}(N)$ or $\mathcal{O}(N\log(N))$ algorithms to compute matrix-vector products. 
In \cite{escande2012spatially}, we proposed to deblur images using diagonal approximations of the blurring operators in redundant wavelet packet bases. This approximation was shown to be fast and efficient in deblurring images when the exact operator was scarcely known or in high noise levels. It is however too coarse for applications with low noise levels. This approach seems however promising. Gabor multipliers are considered the state-of-the-art for 1D signals in ODFM systems for instance (slowly varying smoothing operators).

\paragraph{Weighted convolutions}
Probably the most commonly used approaches consist of approximating the integral kernel by spatially weighted sum of convolutions. Among these approaches two different ideas have been explored. The first one will be called \emph{windowed convolutions} in this paper and appeared in \cite{nagy1997fast,nagy1998restoring, hansen2006deblurring, hirsch2010efficient,denis2014fast}. The second one was proposed in \cite{rigault2005anisoplanatic} and consists of expanding the PSFs in a common basis of small dimensionality.

Windowed convolutions consists of locally stationary approximations of the kernel. We advise the reading of \cite{denis2014fast} for an up-to-date description of this approach and its numerous refinements. The main idea is to decompose the image domain into subregions and perform a convolution on each subregion. 
The results are then gathered together to obtain the blurred image. 
In its simplest form, this approach consists in partitioning the domain $\Omega$ in squares of equal sizes. 
More advanced strategies consist in decomposing the domain with overlapping subregions. 
The blurred image can then be obtained by using windowing functions that interpolate the kernel between subregions (see, e.g., \cite{nagy1997fast,hirsch2010efficient,denis2014fast}). Various methods have been proposed to interpolate the PSF. In \cite{hirsch2010efficient}, a linear interpolation is performed, and in \cite{denis2014fast} higher order interpolation of the PSF are handled.

\paragraph{Sparse wavelet approximations}
The approach studied in this paper was proposed recently and independently in \cite{wei2014fast, wei2009fast,escande2013image}. The main idea is to represent the operator in the wavelet domain by using a change of basis. This change of basis, followed by a thresholding operation allows sparsifying the operator and use sparse matrix-vector products. The main objective of this work is to provide solid theoretical foundations to these approaches.

\subsection{Contributions of the paper}

Our first contribution is the design of a new approach based on sparse approximation of $\bH$ in the wavelet domain.
Using techniques initially developed for pseudo-differential operators \cite{BCR,meyer1992wavelets}, we show that approximations $\widetilde \bH$ satisfying $\|\bH - \widetilde \bH\|_{2\to 2}\leq \epsilon$, can be obtained with this new technique, in no more than $\displaystyle \mathcal{O}\left(N \epsilon^{-d/M} \right)$ operations. In this complexity bound, $M\geq 1$ is an integer that describes the smoothness of the blur kernel. 

Controlling the spectral norm is usually of little relevance in image processing. 
Our second contribution is the design of algorithms that iteratively construct sparse matrix patterns adapted to the structure of images. 
These algorithms rely on the fact that both natural images and operators can be compressed simultaneously in the same wavelet basis.

As a third contribution, we propose an algorithm to design a generic sparsity structure when only the operators regularity is known. This paves the way to the use of wavelet based approaches in blind deblurring problems where operators need to be inferred from the data.

We finish the paper by numerical experiments. 
We show that the proposed algorithms allow significant speed ups compared to some windowed convolutions based methods.

Let us emphasize that the present paper is a continuation of our recent contribution \cite{escande2013image}. 
The main evolution is that i) we provide all the theoretical foundations of the approach with precise hypotheses, ii) we propose a method to automatically generate adequate sparsity patterns and iii) we conduct a thorough numerical analysis of the method.

\subsection{Outline of the paper}

The outline of this paper is as follows. 
We introduce the notation used throughout the paper in Section \ref{sec:notation}. 
We propose an original mathematical description of blurring operators appearing in image processing in Section \ref{sec:model}.
We introduce the proposed method and analyze its theoretical efficiency Section \ref{sec:sparserep}.
We then propose various algorithms to design good sparsity patterns in Section \ref{sec:algorithms}.
Finally, we perform numerical tests to analyze the proposed method and compare it to the standard windowed convolutions based methods in Section \ref{sec:numerics}. 

\begin{figure}[htp]
\centering
\begin{subfigure}[b]{0.4\textwidth}
	\includegraphics[width=\textwidth]{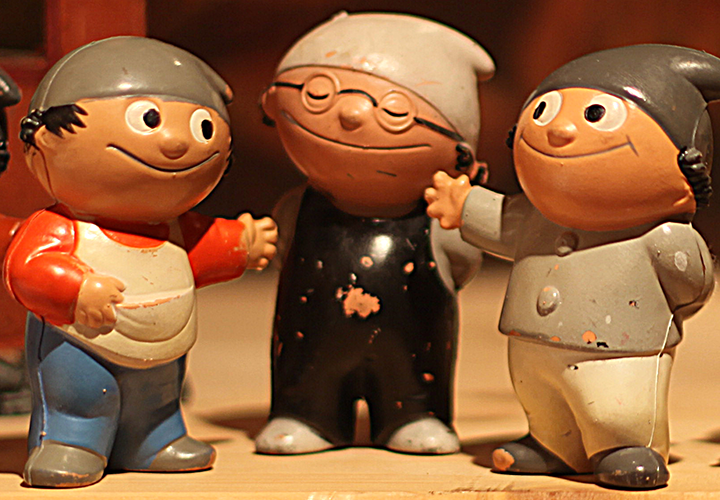}
	\caption{Sharp image}
\end{subfigure}
\quad
\begin{subfigure}[b]{0.48\textwidth}
	\includegraphics[width=\textwidth]{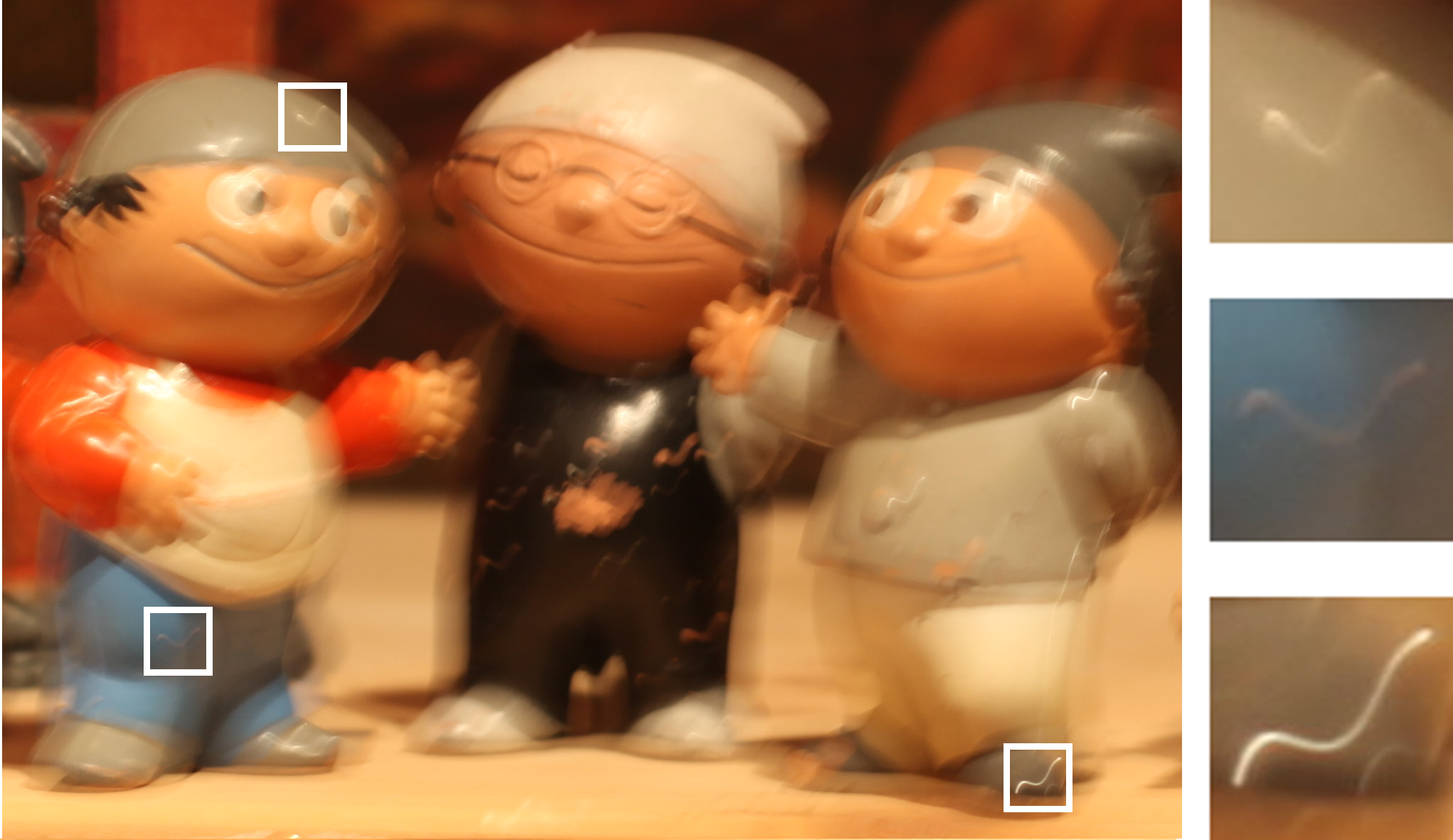}
	\caption{Blurred image and the associated PSF}
\end{subfigure}
	\caption{An example in computer vision. Image degraded by spatially varying blur due to a camera shake. Images are from \cite{hirsch2011fast} and used here by courtesy of Michael Hirsch.} \label{fig:dwarves_motion}
\end{figure}

%

\begin{figure}[htp]
\centering
\includegraphics[width=0.8\textwidth]{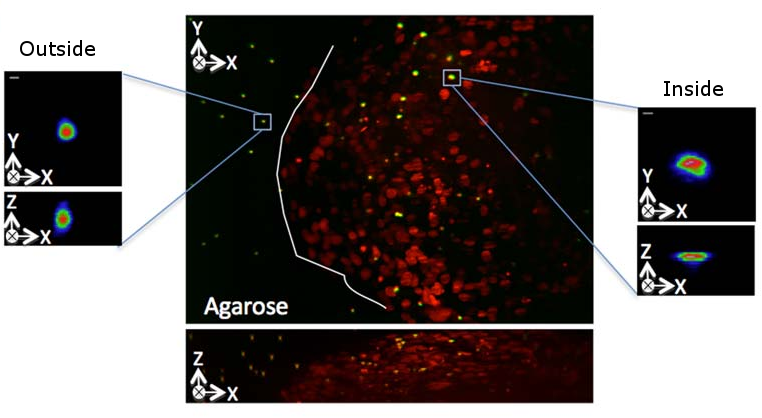}
\caption{An example in biology. Image of a multicellular tumor spheroid imaged in 3D using Selective Plane Illumination Microscope (SPIM). Fluorescence beads (in green) are inserted in the tumor model and allow the observation of the PSF at different locations. Nuclei are stained in red. On the left-hand-side, 3D PSFs outside the sample are observed. On the right-hand-side, 3D PSFs inside the sample are observed. This image is from \cite{10.1371/journal.pone.0035795} and used here by courtesy of Corinne Lorenzo.} \label{fig:billes}
\end{figure}
 
 \begin{figure}[htp]
\centering
\includegraphics[scale=0.8]{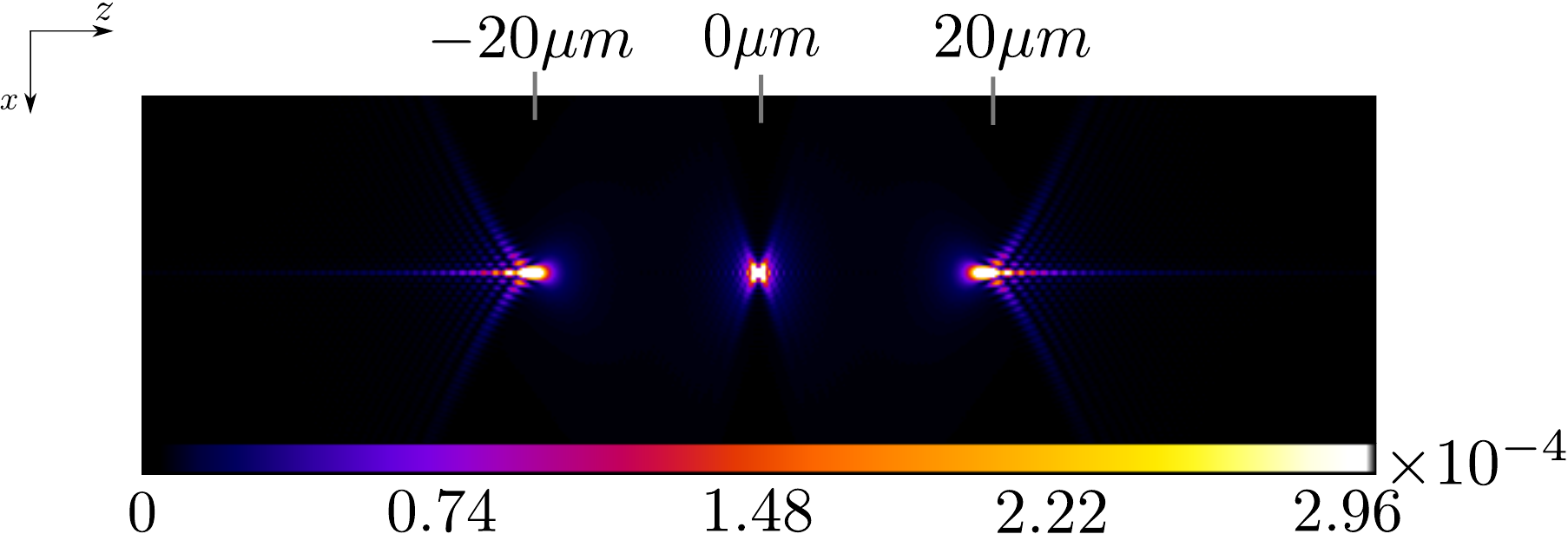}
\caption{Three PSFs displayed in a XZ plan at different $z$ depths: $-20 \micro\meter$
, $0 \micro\meter$ and $20 \micro\meter$. PSFs are generated using Gibson and Lanni 3D optical model from the PSF Generator \cite{Kirshner2011Models}. The parameters used are $n_i = 1.5$, $n_s=1.33$, $t_i = 150 \micro\meter$, $\textrm{NA} = 1.4$ and a wavelength of $610 \nano\meter$.} \label{fig:nonSeparablePSF}
\end{figure}

\section{Notation} \label{sec:notation}

In this paper, we consider $d$ dimensional images defined on a domain $\Omega=[0,1]^d$. 
The space $\LL^2(\Omega)$ will denote the space of squared integrable functions defined on $\Omega$. 

Let $\balpha=(\alpha_1,\hdots, \alpha_d)$ denote a multi-index. The sum of its components is denoted $|\balpha|=\sum_{i=1}^d \alpha_i$. 
The Sobolev spaces $W^{M,p}$ are defined as the set of functions $f \in \LL^p$ with partial derivatives up to order $M$ in $\LL^p$ where $p \in [1, + \infty]$ and $M \in \N$. 
These spaces, equipped with the following norm are Banach spaces
\begin{equation}
  \norm{f}_{W^{M,p}} = \norm{f}_{\LL^p} + \abs{f}_{W^{M,p}}, \quad \text{ where,} \quad \abs{f}_{W^{M,p}} = \sum_{\abs{\balpha} = M } \norm{\p^{\balpha} f}_{\LL^p}.
\end{equation}
In this notation, $\p^{\balpha} f = \frac{\partial^{\alpha_1}}{\partial x_1^{\alpha_1}}\hdots \frac{\partial^{\alpha_d}}{\partial x_d^{\alpha_d}} f$.

Let $X$ and $Y$ denote two metric spaces endowed with their respective norms $\|\cdot\|_X$ and $\|\cdot\|_Y$. In all the paper $H:X\to Y$ will denote a linear  operator and $H^*$ its adjoint operator. The subordinate operator norm is defined by 
\begin{align*}
\|H\|_{X\to Y}= \sup_{x\in X, \|x\|_X=1} \|Hx\|_Y.
\end{align*}
The notation $\norm{H}_{p \rightarrow q}$ corresponds to the case where $X$ and $Y$ are endowed with the standard $\LL^p$ and $\LL^q$ norms. 
In all the paper, operators acting in a continuous domain are written in plain text format $H$. 
Finite dimensional matrices are written in bold fonts $\bH$. 
Approximation operators will be denoted $\wtilde H$ in the continuous domain or $\wtilde \bH$ in the discrete domain.

In this paper we consider a compactly supported wavelet basis of $\LL^2(\Omega)$. We first introduce wavelet basis of $\LL^2([0,1])$. We let $\phi$ and $\psi$ denote the scaling and mother wavelets.
We assume that the mother-wavelet $\psi$ has $M$ vanishing moments, i.e. 
\begin{equation*}
	\fa 0 \leq m < M, \quad \int_{[0,1]} t^m \psi(t) dt = 0.
\end{equation*}
We assume that $\supp(\psi)=[-c(M)/2,c(M)/2]$. Note that $c(M)\geq 2M-1$, with equality for Daubechies wavelets, see, e.g., \cite[Theorem 7.9, p. 294]{Mallat-Book}.

We define translated and dilated versions of the wavelets for $j \geq 0$ as follows
\begin{equation*}
  \phi_{j,l} = 2^{j/2} \phi\left( 2^{j} \cdot - l \right),
\end{equation*}
\begin{equation}\label{eq:defwavelets}
  \psi_{j,l} = 2^{j/2} \psi\left( 2^{j} \cdot - l \right),
\end{equation}
with $l \in \mathcal{T}_j$ and $\mathcal{T}_j = \{0,\ldots,2^j-1\}$.

In dimension $d$, we use separable wavelet bases, see, e.g., \cite[Theorem 7.26, p. 348]{Mallat-Book}. 
Let $\bm=(m_1,\hdots,m_d)$.
Define $\rho_{j,l}^0=\phi_{j,l}$ and $\rho_{j,l}^1=\psi_{j,l}$. 
Let $\be=(e_1,\hdots,e_d)\in \{0,1\}^d$. 
For ease of reading, we will use the shorthand notation $\lambda = (j,m,e)$. 
We also denote 
\begin{equation*}
\Lambda_0 = \set{ (j,m,e) \; | \; j \in \Z, \; m \in \mathcal{T}_j, \; e \in \set{0,1}^d} 
\end{equation*}
 and  
\begin{equation*}
\Lambda = \set{ (j,m,e) \; | \; j \in \Z, \; m \in \mathcal{T}_j, \; e \in \set{0,1}^d \setminus \{0\} }. 
\end{equation*}

Wavelet $ \psi_\lambda $ is defined by $ \psi_{\lambda}(x_1, \ldots, x_d) = \psi_{j,\bm}^\be(x_1,\hdots,x_d)=\rho_{j,m_1}^{e_1}(x_1)\hdots \rho_{j,m_d}^{e_d}(x_d)$.
Elements of the separable wavelet basis consist of tensor products of scaling and mother wavelets at the same scale. 
Note that if $\be\neq \bzero$ wavelet $\psi_{j,\bm}^\be$ has $M$ vanishing moments in $\R^d$.
We let $\displaystyle I_{j,m}=\cup_{e} \supp{\psi^e_{j,m}}$ and $I_{\lambda} = \supp{ \psi_{\lambda}}$.

We assume that every function $f\in\LL^2(\Omega)$ can be written as
\begin{equation*}
	\begin{split}
 u & = \dotproduct{u}{\psi^0_{0,0}} \psi^0_{0,0} + \sum_{e\in \{0,1\}^d \setminus \{0\}} \sum_{j=0}^{+\infty}\sum_{m \in \mathcal{T}_j} \dotproduct{u}{\psi^e_{j,m}} \psi^e_{j,m} \\
	& = \dotproduct{u}{\psi^0_{0,0}} \psi^0_{0,0} + \sum_{ \lambda \in \Lambda} \dotproduct{u}{\psi_{\lambda}} \psi_\lambda \\
	& = \sum_{ \lambda \in \Lambda_0} \dotproduct{u}{\psi_{\lambda}} \psi_\lambda
 	\end{split}
\end{equation*}
This is a slight abuse since wavelets defined in \eqref{eq:defwavelets} do not define a Hilbert basis of $\LL^2([0,1]^d)$. 
There are various ways to define wavelet bases on the interval \cite{cohen1993wavelets} and wavelets having a support intersecting the boundary should be given a different definition. We stick to these definitions to keep the proofs simple.

We let $\Psi^* : \LL^2(\Omega)  \rightarrow  l^2(\Z)$ denote the wavelet decomposition operator and $\Psi : l^2(\Z) \rightarrow \LL^2(\Omega)$ its associated reconstruction operator. 
The discrete wavelet transform is denoted $\bPsi:\R^N\to \R^N$. 
We refer to \cite{Mallat-Book, daubechies_ten_1992,cohen1993wavelets} for more details on the construction of wavelet bases.

\section{Blurring operators and their mathematical properties}
\label{sec:model}

\subsection{A mathematical description of blurring operators}

In this paper, we consider $d$-dimensional real-valued images defined on a domain $\Omega =[0,1]^d$, where $d$ denotes the space dimension. 
We consider a blurring operator $H:\LL^2(\Omega)\to \LL^2(\Omega)$ defined for any $u \in \LL^2(\Omega)$ by the following integral operator:
\begin{equation} \label{eq:integral_operator}
	\forall x \in \Omega, \quad Hu(x) = \int_{y \in \Omega} K(x,y) u(y) dy.
\end{equation}
The function $K: \Omega \times \Omega \rightarrow \R$ is a kernel that defines the Point Spread Function (PSF) $K(\cdot,y)$ at each location $y \in \Omega$. 
The image $Hu$ is the blurred version of $u$. By the Schwartz kernel theorem, a linear operator of kind \eqref{eq:integral_operator} can represent any linear operator if $K$ is a generalized function. 
We thus need to determine properties of $K$ specific to blurring operators that will allow to design efficient numerical algorithms to approximate the integral \eqref{eq:integral_operator}. 

We propose a definition of the class of blurring operators below. 

\begin{definition}[Blurring operators] \label{def:blurring_operators}
	Let $M\in \N$ and $f:[0,1]\to \R_+$ denote a non-increasing bounded function.
	An integral operator is called a blurring operator in the class $\mathcal{A}(M,f)$ if it satisfies the following properties: 
	\begin{enumerate}
		\item Its kernel $K\in W^{M,\infty}(\Omega\times \Omega)$; 
		\item The partial derivatives of $K$ satisfy:
		\begin{enumerate}
		  \item \label{def:blurring_operators:PSFsmoothness} 
		  \begin{equation} \label{eq:blurring_operators:dx_decay}
			  \forall \abs{\alpha} \leq M, \ \forall (x,y) \in \Omega\times \Omega, \quad \abs{\p_x^\alpha K(x,y)} \leq f\left( \norm{x-y}_\infty \right).
		  \end{equation}
		  \item \label{def:blurring_operators:PSFVariationSmoothness}
		  \begin{equation} \label{eq:blurring_operators:dy_decay}
			  \forall \abs{\alpha} \leq M, \ \forall (x,y) \in \Omega\times \Omega, \quad \abs{\p_y^\alpha K(x,y)} \leq f \left( \norm{x-y}_\infty \right).
		  \end{equation}
	  \end{enumerate}
	\end{enumerate}
\end{definition}

Let us justify this model from a physical point of view. Most imaging systems satisfy the following properties:
\begin{description}
 \item[Spatial decay.] \hfill \\ 
 The PSFs usually have a bounded support (e.g. motion blurs, convolution with the CCD sensors support) or at least a fast spatial decay (Airy pattern, Gaussian blurs,...). This property can be modelled as property \ref{def:blurring_operators:PSFsmoothness}. For instance, the 2D Airy disk describing the PSF due to diffraction of light in a circular aperture satisfies \ref{def:blurring_operators:PSFsmoothness} with $f(r)=\frac{1}{(1+r)^4}$ (see e.g. \cite{born1999principles}). 

 \item[PSF smoothness.] \hfill \\ 
 In most imaging applications, the PSF at $y \in \Omega$, $K(\cdot, y)$ is smooth. Indeed it is the result of a convolution with the acquisition device impulse response which is smooth (e.g. Airy disk). This assumption motivates inequality \eqref{eq:blurring_operators:dx_decay}.

 \item[PSFs variations are smooth]  \hfill \\
We assume that the PSF does not vary abruptly on the image domain. 
This property can be modelled by inequality \eqref{eq:blurring_operators:dy_decay}. 
It does not hold true in all applications. 
For instance, when sharp discontinuities occur in the depth maps, the PSFs can only be considered as piecewise regular. 
This assumption simplifies the analysis of numerical procedures to approximate $H$. 
Moreover, it seems reasonable in many settings. For instance, in fluorescence microscopy, the PSF width (or Strehl ratio) mostly depends on the optical thickness, i.e. the quantity of matter laser light has to go through, and this quantity is intrinsically continuous. Even in cases where the PSFs variations are not smooth, the discontinuities locations are usually known only approximately and it seems important to smooth the transitions in order to avoid reconstruction artifacts \cite{bar2007restoration}. 
\end{description}

\begin{remark}
A standard assumption in image processing is that the constant functions are preserved by the operator $H$. This hypothesis ensures that brightness is preserved on the image domain.
In this paper we do not make this assumption and thus encompass image formation models comprising blur and attenuation. Handling attenuation is crucial in domains such as fluroescence microscopy. 
\end{remark}

\begin{remark}
The above properties are important to derive mathematical theories, but only represent an approximation of real systems. 
The methods proposed in this paper may be applied even if the above properties are not satisfied and are likely to perform well.
It is notably possible to relax the boundedness assumption.
\end{remark}
\section{Wavelet representation of the blurring operator}
\label{sec:sparserep}

In this section, we show that blurring operators can be well approximated by sparse representations in the wavelet domain. 
Since $H$ is a linear operator in a Hilbert space, it can be written as $H = \Psi \Theta \Psi^*$, where $\Theta: l^2(\Z) \rightarrow l^2(\Z)$ is the (infinite dimensional) matrix representation of the blur operator in the wavelet domain. Matrix $\Theta$ is characterized by the coefficients:
\begin{equation}
  \label{eq:definition_theta}
  \theta_{\lambda, \mu} =  \dotproduct{H \psi_\lambda}{\psi_{\mu}} , \qquad \forall \lambda, \mu \in \Lambda.
\end{equation}

In their seminal papers \cite{meyer1992wavelets,coifman1997wavelets,BCR}, Y. Meyer, R. Coifman, G. Beylkin and V. Rokhlin prove that the coefficients of $\Theta$ decrease fastly away from its diagonal for a large class of pseudo-differential operators. They also show that this property allows to design fast numerical algorithms to approximate $H$, by thresholding $\Theta$ to obtain a sparse matrix. 
In this section, we detail this approach precisely and adapt it to the class of blurring operators. 

This section is organized as follows: first, we discuss the interest of approximating $H$ in a wavelet basis rather than using the standard discretization. 
Second, we provide various theoretical results concerning the number of coefficients necessary to obtain an $\epsilon$-approximation of $H$.

\subsection{Discretization of the operator by projection}\label{sec:discretization}

The proposed method relies on a Galerkin discretization of $H$. 
The main idea is to use a projection on a finite dimensional linear subspace $V_q=\mathrm{Span}(\varphi_1,\hdots, \varphi_q)$ of $\LL^2(\Omega)$ where $(\varphi_1,\varphi_2,\hdots)$ is an orthonormal basis of $\LL^2(\Omega)$. 
We define a projected operator $H_q$ by $H_q u=P_{V_q} H P_{V_q} u$. where $P_{V_q}$ is the projector on $V_q$.
We can associate a $q \times q$ matrix $\bTheta$ to this operator defined by $\bTheta = \left( \dotproduct{H\varphi_i}{\varphi_j}\right)_{1\leq i,j \leq q}$.

It is very common in image processing to assume that natural images belong to functional spaces containing functions with some degree of regularity. 
For instance, images are often assumed to be of bounded total variation \cite{rudin1992nonlinear}.
This hypothesis implies that 
\begin{equation}
\label{eq:function_approximation_error}
\|u-P_{V_q}u\|_2=\mathcal{O}(q^{-\alpha})
\end{equation} 
for a certain $\alpha>0$. 
For instance, in 1D, if $(\varphi_1,\varphi_2, \hdots)$ is a wavelet or a Fourier basis and $u\in H^1(\Omega)$ then $\alpha=2$. For $u\in BV(\Omega)$ (the space of bounded variation functions), $\alpha=1$ in 1D and $\alpha=1/2$ in 2D \cite{Mallat-Book,petrushev1999nonlinear}. 

Moreover, if we assume that $H$ is a regularizing operator, meaning that $\|Hu - P_{V_q} Hu\|_2 = \mathcal{O}(q^{-\beta})$ with $\beta\geq\alpha$ for all $u$ satisfying \eqref{eq:function_approximation_error}, then we have:
\begin{align*}
 &\|Hu- H_q u\|_2 \\
&=\|Hu- P_{V_q} H (u+ P_{V_q} u - u) \|_2 \\
&\leq \|Hu- P_{V_q} H u \|_2 + \|P_{V_q}H\|_{2\to 2}\|P_{V_q} u - u \|_2 \\
&= \mathcal{O}(q^{-\alpha}).
\end{align*}

This simple analysis shows that under mild assumptions, the Galerkin approximation of the operator converges and that the convergence rate can be controlled. 
The situation is not as easy for standard discretization using finite elements for instance (see, e.g., \cite{wang2011error,bartels2012total} where a value $\alpha=1/6$ is obtained in 2D for BV functions, while the simple analysis above leads to $\alpha=1/2$).

\subsection{Discretization by projection on a wavelet basis}

In order to get a representation of the operator in a finite dimensional setting, we truncate the wavelet representation at scale $J$. 
This way, we obtain an operator $\widetilde H$ acting on a space of dimension $N$, where $N=1+\sum_{j=0}^{J-1}(2^d-1) 2^{dj} $ denotes the numbers of wavelets kept to represent images.

After discretization, it can be written in the following convenient form:
\begin{equation}\label{eq:matrixH}
 \bH= \bPsi \bTheta \bPsi^*
\end{equation}
where $\bPsi:\R^{N} \to \R^{N}$ is the discrete separable wavelet transform.
Matrix $\bTheta$ is an $N \times N$ matrix which corresponds to a truncated version (also called finite section) of the matrix $\Theta$ defined in \eqref{eq:definition_theta}.

\subsection{Theoretical guarantees with sparse approximations}

Sparse approximations of integral operators have been studied theoretically in \cite{BCR, meyer1992wavelets}. 
They then have been successfully used in the numerical analysis of PDEs \cite{dahmen_wavelet_1993,cohen2002adaptive,cohen2003numerical}.
Surprisingly, they have been scarcely applied to image processing. 
The two exceptions we are aware of are the paper \cite{CMY-Foveation}, where the authors show that wavelet multipliers can be useful to approximate foveation operators. 
More recently, \cite{wei2014fast} proposed an approach that is very much related to that of our paper. 

Let us provide a typical result that motivates the proposed approach. 
	
		

\begin{lemma}[Decay of $\theta_{\lambda,\mu}$] \label{lem:decay}
Assume that $H$ is a blurring operator (see Definition \ref{def:blurring_operators}) in the class $\mathcal{A}(M,f)$.
Assume that the mother wavelet is compactly supported with $M$ vanishing moments.
  
Then, the coefficients of $\Theta$ satisfy the following inequality for all $\lambda = (j,m,e) \in \Lambda$ and $\mu = (k,n,e') \in \Lambda$:
\begin{equation} \label{eq:decay}
  \abs{\theta_{\lambda, \mu} } \leq C_M 2^{-\left(M + \frac{d}{2} \right)\abs{j-k}} 2^{-\min(j,k)\left(M + d\right)} f_{\lambda,\mu}
\end{equation}
where $f_{\lambda, \mu} = f\left( \dist{I_{\lambda}}{I_{\mu}} \right)$, $C_M$ is a constant that does not depend on $\lambda$ and $\mu$
and 
\begin{align}
\dist{I_{\lambda}}{I_{\mu}} &= \inf_{x\in I_{\lambda},\, y\in I_{\mu}} \|x-y\|_\infty \nonumber \\
& = \max \left(0, \norm{2^{-j}m - 2^{-k}n}_\infty - (2^{-j} + 2^{-k})\frac{c(M)}{2}\right). \label{eq:defdist}
\end{align}
\end{lemma}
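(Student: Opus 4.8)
The plan is to estimate $\theta_{\lambda,\mu}=\langle H\psi_\lambda,\psi_\mu\rangle=\int\!\!\int K(x,y)\psi_\lambda(y)\psi_\mu(x)\,dx\,dy$ by exploiting the two features of the wavelets that interact with the two hypotheses on $K$: the $M$ vanishing moments (to gain decay in $\abs{j-k}$ and in $\min(j,k)$) and the compact support (to get the factor $f_{\lambda,\mu}$ via the spatial decay of the kernel). Without loss of generality I will treat the case $k\le j$, i.e. $\min(j,k)=k$ and $\psi_\mu$ is the coarser wavelet; the other case is symmetric, using property \ref{def:blurring_operators:PSFsmoothness} in place of \ref{def:blurring_operators:PSFVariationSmoothness} (here I would use the $\p_y^\alpha$ bound \eqref{eq:blurring_operators:dy_decay} to handle the integration in $y$ against the finer wavelet). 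First I would fix $x$ and Taylor-expand $y\mapsto K(x,y)$ around the center $y_0=2^{-j}m$ of $I_\lambda$ to order $M-1$, with integral remainder of order $M$. When I integrate this expansion against $\psi_\lambda(y)$, the polynomial part of degree $<M$ is annihilated by the $M$ vanishing moments of $\psi_\lambda$, so only the remainder survives:
\begin{equation*}
\int K(x,y)\psi_\lambda(y)\,dy = \int R_M(x,y)\psi_\lambda(y)\,dy,\qquad \abs{R_M(x,y)}\le \frac{1}{(M-1)!}\sum_{\abs{\alpha}=M}\abs{y-y_0}^M\sup_{\xi\in I_\lambda}\abs{\p_y^\alpha K(x,\xi)}.
\end{equation*}

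On $\supp\psi_\lambda$ one has $\abs{y-y_0}\lesssim 2^{-j}c(M)$, and $\abs{\p_y^\alpha K(x,\xi)}\le f(\norm{x-\xi}_\infty)$ by \eqref{eq:blurring_operators:dy_decay}; since $f$ is non-increasing and, for $x\in I_\mu$ and $\xi\in I_\lambda$, $\norm{x-\xi}_\infty\ge \dist{I_\lambda}{I_\mu}$, I get $\abs{\p_y^\alpha K(x,\xi)}\le f_{\lambda,\mu}$. Combining this with the elementary $\LL^1$ bound $\int\abs{\psi_\lambda}\lesssim 2^{-j/2}$ (from $\norm{\psi_\lambda}_1=2^{-j/2}\norm{\psi}_1$) yields
\begin{equation*}
\Bigl|\int K(x,y)\psi_\lambda(y)\,dy\Bigr| \le C_M\,2^{-jM}\,2^{-j/2}\,f_{\lambda,\mu}\qquad\text{for all }x\in I_\mu .
\end{equation*}
Next I would integrate this against $\psi_\mu(x)$ over $I_\mu$, using $\norm{\psi_\mu}_1\lesssim 2^{-k/2}$, which gives $\abs{\theta_{\lambda,\mu}}\le C_M 2^{-jM}2^{-j/2}2^{-k/2}f_{\lambda,\mu}$. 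A little bookkeeping rewrites the scale factors: with $k=\min(j,k)$ and $j-k=\abs{j-k}$,
\begin{equation*}
2^{-jM}2^{-j/2}2^{-k/2}=2^{-(M+\frac d2)\abs{j-k}}\,2^{-\min(j,k)(M+1)} .
\end{equation*}
In dimension $d$ the separable structure $\psi_\lambda=\prod_i\rho^{e_i}_{j,m_i}$ produces the same Taylor argument in each coordinate where $e_i=1$ (and at least one such coordinate exists since $\mu,\lambda\in\Lambda$), so the moment gain $2^{-jM}$ is unchanged, while the $\LL^1$ normalizations become $\norm{\psi_\lambda}_1\lesssim 2^{-jd/2}$ and $\norm{\psi_\mu}_1\lesssim 2^{-kd/2}$; redoing the arithmetic gives exactly the exponents $-(M+\tfrac d2)\abs{j-k}$ and $-\min(j,k)(M+d)$ of \eqref{eq:decay}. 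Finally I would verify formula \eqref{eq:defdist} for $\dist{I_\lambda}{I_\mu}$: since $I_\lambda$ is (contained in) a cube of side $2^{-j}c(M)$ centered at $2^{-j}m$ and similarly for $I_\mu$, the $\ell^\infty$ distance between the cubes is $\max(0,\norm{2^{-j}m-2^{-k}n}_\infty-(2^{-j}+2^{-k})c(M)/2)$, and because $f$ is non-increasing the bound $f(\norm{x-\xi}_\infty)\le f(\dist{I_\lambda}{I_\mu})=f_{\lambda,\mu}$ used above is legitimate.

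The main obstacle I anticipate is not any single estimate but the careful handling of the case split and the separable/multi-dimensional bookkeeping: one must make sure the $2^{-jM}$ factor really does come out with the full exponent $M$ (not $M$ split awkwardly across coordinates), that $\min(j,k)$ appears with the combined exponent $M+d$ rather than $M+1$, and that the symmetric case $j<k$ (which forces the use of the $\p_x^\alpha$ hypothesis \eqref{eq:blurring_operators:dx_decay} and a Taylor expansion in $x$ instead) genuinely produces the same bound with $j$ and $k$ interchanged — which is why the estimate is phrased with $\abs{j-k}$ and $\min(j,k)$. A secondary technical point is the boundary wavelets, which the paper has explicitly chosen to ignore for simplicity, so I would not dwell on them; one also needs $K\in W^{M,\infty}$ to legitimately take the Taylor expansion with the stated remainder, which is guaranteed by hypothesis 1 of Definition \ref{def:blurring_operators}.
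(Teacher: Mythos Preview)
Your proposal is correct and follows essentially the same route as the paper: exploit the vanishing moments of the \emph{finer} wavelet to replace $K$ by a polynomial approximation (you do this via an explicit Taylor remainder, the paper via the Deny--Lions polynomial approximation lemma, which is the same estimate), bound the remainder by $2^{-\max(j,k)M}f_{\lambda,\mu}$ using the appropriate $\partial_x$ or $\partial_y$ hypothesis, and collect the $\LL^1$ normalizations $2^{-jd/2}2^{-kd/2}$ of the two wavelets. The only cosmetic difference is that you treat the case $k\le j$ first (using the $\partial_y$ bound \eqref{eq:blurring_operators:dy_decay}) whereas the paper treats $j\le k$ first (using the $\partial_x$ bound \eqref{eq:blurring_operators:dx_decay}); the two are exchanged by the duality $\langle H\psi_\lambda,\psi_\mu\rangle=\langle \psi_\lambda,H^*\psi_\mu\rangle$, exactly as you and the paper both note.
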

	
\begin{proof}
  See Appendix \ref{appendix:proof_decay}.
\end{proof}


Lemma \ref{lem:decay} is the key to obtain all subsequent complexity estimates.

\begin{theorem} \label{thm:proof_thresh}
Let $\bTheta_{\eta}$ be the matrix obtained by zeroing all coefficients in $\mathbf{\Theta}$ such that
\begin{equation*}
	2^{-min(j,k)(M+d)} f_{\lambda, \mu}  \leq \eta,
\end{equation*}	
with $\lambda = (j,m,e) \in \Lambda$ and $\mu = (k,n,e') \in \Lambda$.

Let $\wtilde \bH_{\eta} = \mathbf{\Psi} \bTheta_{\eta} \mathbf{\Psi}^*$ denote 
the resulting operator.
Suppose that $f$ is compactly supported in $[0,\kappa]$ and that $\eta\leq \log_2(N)^{-(M+d)/d}$. Then:
\begin{enumerate}
 \item[i)] 	The number of non zero coefficients in $\bTheta_\eta$ is bounded above by
	\begin{equation}\label{eq:numberofcoeffs}
	 C'_M N \kappa^d \; \eta^{-\frac{d}{M+d}} 
	\end{equation}
	where $C'_M>0$ is independent of $N$.
 \item[ii)] The approximation $\wtilde \bH_{\eta}$ satisfies $\norm{\bH - \wtilde \bH_{\eta}}_{2 \rightarrow 2} \lesssim \eta^{ \frac{M}{M+d} }$.
 \item[iii)] The number of coefficients needed to satisfy $\norm{\bH - \wtilde \bH_{\eta}}_{2 \rightarrow 2} \leq \epsilon$ is bounded above by
	\begin{equation}\label{eq:wavelet_quality}
		C''_M N \kappa^d \; \epsilon^{-\frac{d}{M}}
	\end{equation}
	where $C''_M>0$ is independent of $N$.
\end{enumerate}
\end{theorem}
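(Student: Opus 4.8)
The plan is to derive all three claims from the coefficient decay of Lemma~\ref{lem:decay}. Since the discrete wavelet transform $\bPsi$ is orthogonal, $\norm{\bH-\wtilde\bH_\eta}_{2\to2}=\norm{\bPsi(\bTheta-\bTheta_\eta)\bPsi^*}_{2\to2}=\norm{\bTheta-\bTheta_\eta}_{2\to2}$, so the whole statement reduces to estimates on the matrix $\bM:=\bTheta-\bTheta_\eta$, whose $(\lambda,\mu)$ entry equals $\theta_{\lambda,\mu}$ when $2^{-\min(j,k)(M+d)}f_{\lambda,\mu}\le\eta$ and is $0$ otherwise. By Lemma~\ref{lem:decay} together with the thresholding rule, every nonzero entry of $\bM$ satisfies simultaneously
\begin{equation*}
\abs{\bM_{\lambda,\mu}}\le C_M\,2^{-(M+\frac d2)\abs{j-k}}\bigl(2^{-\min(j,k)(M+d)}f_{\lambda,\mu}\bigr)
\qquad\text{and}\qquad
2^{-\min(j,k)(M+d)}f_{\lambda,\mu}\le\eta .
\end{equation*}
Two structural observations drive the argument: the compact support of $f$ forces $\dist{I_\lambda}{I_\mu}\le\kappa$ wherever $\bM_{\lambda,\mu}\ne 0$; and, since $f\le\norm{f}_\infty$, the reverse inequality $2^{-\min(j,k)(M+d)}f_{\lambda,\mu}>\eta$ (i.e.\ retention in $\bTheta_\eta$) forces $\min(j,k)\le j^\star$ with $2^{dj^\star}\asymp\eta^{-d/(M+d)}$.

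For i), I would count the nonzero entries of $\bTheta_\eta$ directly. By the $\lambda\leftrightarrow\mu$ symmetry assume $j\le k$. At a pair of scales $(j,k)$ with $j\le\min(J,j^\star)$ there are $\lesssim 2^{dj}$ wavelets $\lambda$ at scale $j$, and for each of them the number of $\mu$ at scale $k$ with $\dist{I_\lambda}{I_\mu}\le\kappa$ is $\lesssim 2^{dk}\bigl(2^{-j}c(M)+\kappa\bigr)^d$ (lattice points of $2^{-k}\Z^d$ in a cube of side $\lesssim 2^{-j}c(M)+\kappa$). Summing the geometric series over $k$ up to $J$ gives $\lesssim N\,2^{dj}\bigl(2^{-j}c(M)+\kappa\bigr)^d\lesssim N\bigl(c(M)^d+2^{dj}\kappa^d\bigr)$, and summing over $j\le\min(J,j^\star)$ yields $\lesssim N\bigl(\min(J,j^\star)\,c(M)^d+\kappa^d 2^{dj^\star}\bigr)$. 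Here $2^{dj^\star}\asymp\eta^{-d/(M+d)}$, and the hypothesis $\eta\le\log_2(N)^{-(M+d)/d}$ gives $\min(J,j^\star)\le J\lesssim\log_2 N\lesssim\eta^{-d/(M+d)}$, so both terms are $\lesssim_M N\kappa^d\eta^{-d/(M+d)}$, which is \eqref{eq:numberofcoeffs}.

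For ii), the engine is the elementary interpolation $\min(a,\eta)\le a^{1-\delta}\eta^{\delta}$ for $0\le\delta\le1$, applied with $a=2^{-\min(j,k)(M+d)}f_{\lambda,\mu}$ and the specific value $\delta=\tfrac{M}{M+d}$; combined with the two inequalities above this gives, for every nonzero entry,
\begin{equation*}
\abs{\bM_{\lambda,\mu}}\le C_M\,2^{-(M+\frac d2)\abs{j-k}}\,\eta^{\frac M{M+d}}\,2^{-\min(j,k)d}\,f_{\lambda,\mu}^{\,d/(M+d)} .
\end{equation*}
I would then decompose $\bM=\sum_{l\in\Z}\bM_l$ by the scale difference $l=j-k$ and bound $\norm{\bM_l}_{2\to2}$ by the geometric mean of its largest row sum and its largest column sum (Schur test). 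In each block the sum over the companion index is a lattice sum of $f_{\lambda,\mu}^{d/(M+d)}$, which is finite because $f^{d/(M+d)}$ is bounded and compactly supported, and the exponent $\delta=\tfrac{M}{M+d}$ is precisely what makes the residual power of $2^{\min(j,k)}$ cancel against that lattice sum. One of the two (row/column) sums then stays uniformly bounded while the other may grow like $2^{(\frac d2-M)\abs{l}}$, but under the square root this is tamed by the off-scale factor $2^{-(M+\frac d2)\abs{l}}$, yielding $\norm{\bM_l}_{2\to2}\lesssim\eta^{M/(M+d)}2^{-M\abs{l}}$; summing the geometric series over $l$ gives $\norm{\bM}_{2\to2}\lesssim\eta^{M/(M+d)}$.

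Part iii) is then pure optimization: given $\epsilon$, take $\eta\asymp\epsilon^{(M+d)/M}$ — admissible once $\epsilon$ is small enough (relative to $N$) for the hypothesis $\eta\le\log_2(N)^{-(M+d)/d}$ to hold — so that ii) gives $\norm{\bH-\wtilde\bH_\eta}_{2\to2}\le\epsilon$, and substitute into \eqref{eq:numberofcoeffs}: $N\kappa^d\eta^{-d/(M+d)}\asymp N\kappa^d\epsilon^{-d/M}$, which is \eqref{eq:wavelet_quality}. I expect ii) to be the main obstacle: the bound must be simultaneously of the exact order $\eta^{M/(M+d)}$, linear in $N$ with no parasitic $\log N$ or polynomial-in-$N$ factors, and valid for every $M\ge1$ — in particular for $M\le d/2$, where a single Schur test on $\bM$ is too lossy and one genuinely needs the scale-difference decomposition and the precise interpolation exponent. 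Part i) is comparatively routine once one notices that the hypothesis on $\eta$ is exactly what lets one absorb the $O(N\log N)$ count of wavelet pairs with overlapping supports, and iii) is immediate.
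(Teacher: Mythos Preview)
Your argument for parts i) and iii) is essentially the paper's: the same cutoff scale $j^\star\asymp J(\eta)=\frac{-\log_2(\eta/c_f)}{M+d}$, the same lattice count $\lesssim 2^{dk}(2^{-j}c(M)+\kappa)^d$ for the number of $\mu$ at scale $k$ near a fixed $\lambda$, and the same use of the hypothesis $\eta\le(\log_2 N)^{-(M+d)/d}$ to absorb the $N\log N$ term coming from wavelet pairs with overlapping supports.

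For part ii) your route is genuinely different. The paper does \emph{not} use the interpolation inequality $\min(a,\eta)\le a^{1-\delta}\eta^\delta$ nor a decomposition by scale difference. Instead it applies a single Schur test to the full residual $\bDelta_\eta=\bTheta-\bTheta_\eta$: by symmetry of the upper bound one has $\|\bDelta_\eta\|_{2\to2}\le\|\bDelta_\eta\|_{1\to1}$, and the column sum $\sum_\mu|\Delta_{\lambda,\mu}|$ is estimated directly from Lemma~\ref{lem:decay} together with the constraint $\min(j,k)\ge J(\eta)$, splitting into $k<j$ and $k\ge j$ and summing the resulting geometric series. This is shorter and avoids the block machinery. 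Your approach, by contrast, extracts the factor $\eta^{M/(M+d)}$ up front via interpolation and then applies Schur block by block on $\bM_l$; the payoff is exactly what you anticipated: the paper's direct estimate of $A_2=\sum_{k\ge j}(\cdots)2^{-k(M-d/2)}$ tacitly uses $M>d/2$ for the geometric sum to be dominated by its first term, so for $M\le d/2$ (e.g.\ Haar wavelets in $d\ge2$, or any $M=1$ in $d\ge3$) a parasitic dependence on $J_{\max}$ creeps in. Your block decomposition sidesteps this because the possibly growing factor $2^{(d/2-M)|l|}$ in one of the row/column sums is tamed under the geometric mean by the decaying factor $2^{-(M+d/2)|l|}$ in the other, yielding $\|\bM_l\|_{2\to2}\lesssim 2^{-M|l|}\eta^{M/(M+d)}$ summable for every $M\ge1$. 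So: the paper's proof is more direct when $M>d/2$; yours is more robust in general.
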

	\begin{proof}
		See Appendix \ref{appendix:proof_thresh_d_dim}.
	\end{proof}

Let us summarize the main conclusions drawn from this section:
\begin{itemize}
 \item A discretization in the wavelet domain provides better theoretical guarantees than the standard quadrature rules (see Section \ref{sec:discretization}).
 \item  The method is capable of handling \textit{automatically} the degree of smoothness of the integral kernel $K$ since there is a dependency in $\epsilon^{-\frac{d}{M}}$ where $M$ is the smoothness of the integral operator.
 \item We will see in the next section that the method is quite versatile since different sparsity patterns can be chosen depending on the knowledge of the blur kernel and on the regularity of the signals that are to be processed. 
 \item The method can also handle more general singular operators as was shown in the seminal papers \cite{meyer1992wavelets,coifman1997wavelets,BCR}.
\end{itemize}

\begin{remark}
	Similar bounds as \eqref{eq:decay} can be derived with less stringent assumptions. First, the domain can be unbounded, given that kernels have a sufficiently fast decay at infinity. Second, the kernel can blow up on its diagonal, which is the key to study Calderon-Zygmund operators (see \cite{meyer1992wavelets,coifman1997wavelets,BCR} for more details). We sticked to this simpler setting to simplify the proofs.
\end{remark}
\section{Identification of sparsity patterns}
\label{sec:algorithms}

A key step to control the approximation quality is the selection of the coefficients in the matrix $\bTheta$ that should be kept. 
For instance, a simple thresholding of $\bTheta$ leads to sub-optimal and somewhat disappointing results. 
In this section we propose algorithms to select the most relevant coefficients for images belonging to functional spaces such as that of bounded variation functions.
We study the case where $\bTheta$ is known completely and the case where only an upper-bound such as \eqref{eq:decay} is available.

\subsection{Problem formalization}

Let $\bH$ be the $N^d \times N^d$ matrix defined in equation \eqref{eq:matrixH}. 
We wish to approximate $\bH$ by a matrix $\wtilde \bH_K$ of kind $\mathbf{\Psi} \bS_K \mathbf{\Psi^*}$ where $\bS_K$ is a matrix with at most $K$ non-zero coefficients.
Let $\mathbb{S}_K$ denote the space of $N\times N$ matrices with at most $K$ non-zero coefficients. 
The problem we address in this paragraph reads
\begin{align*}
  &\min_{\bS_K \in \mathbb{S}_K } \norm{ \bH - \wtilde \bH_K  }_{X\to 2}  \\
 &= \min_{\bS_K \in \mathbb{S}_K} \max_{\norm{\bu}_X \leq 1} \norm{\bH \bu - \mathbf{\Psi} \bS_K \mathbf{\Psi^*} \bu}_2.
\end{align*}
The solution of this problem provides the best $K$-sparse matrix $\bS_K$, in the sense that no other choice provides a better SNR uniformly on the unit-ball $\{\bu \in \R^{N}, \norm{\bu}_X \leq 1\}$.

\subsubsection{Theoretical choice of the space $X$}

The norm $\norm{ \cdot }_X$ should be chosen depending on the type of images that have to be blurred.
For instance, it is well-known that natural images are highly compressible in the wavelet domain  \cite{Mallat-Book,simoncelli1999modeling}.
This observation is the basis of JPEG2000 compression standard.
Therefore, a natural choice could be to set $\norm{\bu}_X = \norm{ \mathbf{\Psi}^* \bu}_1$. 
This choice will ensure a good reconstruction of images that have a wavelet decomposition with a low $\ell^1$-norm.

Another very common assumption in image processing is that images have a bounded total variation.
The space of functions with bounded total variation \cite{aubert2006mathematical} contains images discontinuous along edges with finite length. 
It is one of the most successful tools for image processing tasks such as denoising, segmentation, reconstruction, ...
Functions in $BV(\Omega)$ can be characterized by their wavelet coefficients \cite{petrushev1999nonlinear,Mallat-Book}.
For instance, if $u \in BV(\Omega)$, then
\begin{equation}\label{eq:charaBV}
\sum_{\lambda \in \Lambda_0} 2^{j(1 - \frac{d}{2})} \abs{\dotproduct{u}{\psi_{\lambda}}} < +\infty
\end{equation}
for all wavelet bases. This results is due to embeddings of $BV$ space in Besov spaces which are characterized by their wavelet coefficients (see \cite{cohen2003numerical} for more details on Besov spaces).
This result motivated us to consider norms defined by
\begin{equation*}
 \norm{\bu}_X = \norm{\bSigma \bPsi^* \bu}_1
\end{equation*}
where $\bSigma=\mathrm{diag}(\sigma_1,\hdots,\sigma_N)$ is a diagonal matrix. Depending on the regularity level of the images considered, different diagonal coefficients can be used. 
For instance, for BV signals in 1D, one could set $\sigma_i = 2^{j(i)/2}$ where $j(i)$ is the scale of the $i$-th wavelet, owing to \eqref{eq:charaBV}.

\subsubsection{Practical choice of the space $X$}\label{subsub:practicalX}

More generally, it is possible to adapt the weights $\sigma_i$ depending on the images to recover.
Most images exhibit a similar decay of wavelet coefficients across subbands. 
This decay is a characteristic of the functions regularity (see e.g. \cite{hernandez1996first}).
To illustrate this fact, we conducted a simple experiment in Figure \ref{fig:decay_wavelet}. 
We evaluate the maximal value of the amplitude of wavelet coefficients of three images with different contents across scales. 
The wavelet transform is decomposed at level $4$ and we normalize the images so that their maximum wavelet coefficient is $1$. 
As can be seen even though the maximal values differ from one image to the next, their overall behavior is the same: amplitudes decay nearly dyadically from one scale to the next. The same phenomenon can be observed with the mean value. 

\begin{figure}[htp]
\centering
\begin{subfigure}[b]{0.3\textwidth} 
	\frame{\includegraphics[width=\textwidth]{../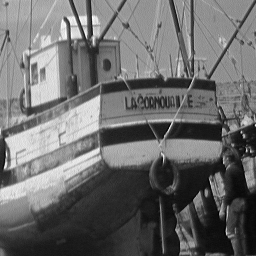}}
	\caption{Boat \\ ($1 - 0.02 - 0.02 - 0.009 - 0.004$)} 
\end{subfigure}
\quad
\begin{subfigure}[b]{0.3\textwidth} 
	\frame{\includegraphics[width=\textwidth]{../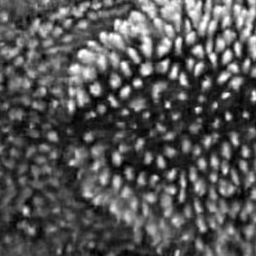}}
	\caption{Drosophila \\ ($1 - 0.04 - 0.02 - 0.007 - 0.004$)} 
\end{subfigure}
\quad
\begin{subfigure}[b]{0.3\textwidth} 
	\frame{\includegraphics[width=\textwidth]{../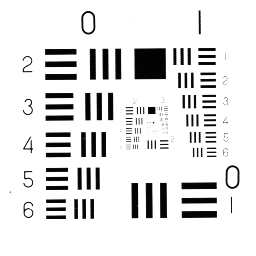}}
	\caption{Pattern \\ ($1 - 0.02 - 0.02 - 0.009 - 0.004$)} 
\end{subfigure}
\caption{Three pictures and the mean amplitude of their wavelet coefficients at each scale of the wavelet transform.} \label{fig:decay_wavelet}
\end{figure}

This experiment suggests setting $\sigma_i=2^{j(i)}$ in order to normalize the wavelet coefficients amplitude in each subband. 
Once again, the same idea was explored in \cite{wei2014fast}. 

\subsubsection{An optimization problem}

We can now take advantage of the fact that images and operators are sparse in the same wavelet basis.
Let $\bz = \bPsi^* \bu$ and $\bDelta = \bTheta - \bS_K$. 
Since we consider orthogonal wavelet transforms, we have $\norm{\bPsi \bu}_2 = \norm{\bu}_2$, for any $\bu \in \R^N$ therefore:
\begin{align*}		
\norm{ \bH - \wtilde{\bH}_K }_{X \rightarrow 2} &= \max_{ \norm{\bu}_X \leq 1 } \norm{ \bPsi ( \bTheta - \bS_K ) \bPsi^* \bu }_2 \\
&= \max_{ \norm{\bSigma \bz}_1 \leq 1 } \norm{ ( \bTheta - \bS_K ) \bz}_2 \\
& = \max_{ \norm{\bz}_1 \leq 1 } \norm{ \bDelta \bSigma^{-1} \bz}_2.
\end{align*}
Since the operator norm $\norm{\bA}_{1 \to 2} = \displaystyle \max_{1 \leq i \leq N} \norm{\bA^{(i)}}_2$, where $\bA^{(i)}$ denote the $i$-th column of the $N \times N$ matrix $\bA$ and by remarking that $(\bDelta \bSigma^{-1})^{(i)} = \bDelta^{(i)} \sigma_i^{-1}$, we finally get the following simple expression for the operator norm:
\begin{equation} \label{eq:normX-2}
  \norm{ \bH - \wtilde{\bH} }_{X \rightarrow 2} = \max_{1\leq i \leq N} \frac{1}{\sigma_i} \norm{ \bDelta^{(i)} }_2.
\end{equation}
Our goal is thus to find the solution of:
\begin{equation}\label{eq:finalthreshprob}
\min_{\bS_K \in \mathbb{S}_K} \max_{1\leq i \leq N} \frac{1}{\sigma_i} \norm{ \bDelta^{(i)} }_2.
\end{equation}


\subsection{Link with the approach in \cite{wei2014fast}}\label{subsec:wei}

In this paragraph, we show that the method proposed in \cite{wei2009fast, wei2014fast}, can be interpreted with the formalism given above. 
In those papers, $\bTheta$ is approximated by $\wtilde \bTheta$ using the following rule:
\begin{equation}\label{eq:thresh_rule_bouman}
	\wtilde \bTheta_{i,j} = \left\{ \begin{array}{l l}
		\bTheta_{i,j} & \text{if } \frac{\bTheta_{i,j}}{w_j} \text{ is in the $K$ largest values of $\bTheta \bW^{-1}$} \\
		0			  & \text{otherwise}.
	\end{array} \right.
\end{equation}
The weights $w_i$ are set as constant by subbands and learned as described in paragraph \ref{subsub:practicalX}.

The thresholding rule \eqref{eq:thresh_rule_bouman} can be interpreted as the solution of the following problem:
\[
	\min_{\wtilde \bTheta \in \mathbb{S}_K} \norm{\bTheta - \wtilde \bTheta}_{\bW \to \infty},
\]
where here $\norm{ x }_{\bW} = \norm{\bW x}_1$ with $\bW = \textrm{diag}(w_i)$ a diagonal matrix. 
Indeed, the above problem is equivalent to:
\[
	\min_{\wtilde \bTheta \in \mathbb{S}_K} \max_{1 \leq i,j \leq N} \abs{ \frac{1}{w_j} \left(\bTheta - \wtilde \bTheta\right)_{i,j} }.
\]
In other words, the method proposed in \cite{wei2009fast, wei2014fast} constructs a $K$ best-term approximation of $\bTheta$ in the metric $\|\cdot\|_{\bW \to \infty}$.

Overall, the problem is very similar to \eqref{eq:finalthreshprob}, except that the image quality is evaluated through an infinite norm in the wavelet domain, while we propose using a Euclidean norm in the spatial domain. We believe that this choice is more relevant for image processing since the SNR is the most common measure of image quality. In practice, we will see in the numerical experiments that both methods lead to very similar practical results.

Finally, let us mention that the authors in \cite{wei2014fast} have an additional concern of storing the matrix representation with the least memory. They therefore \textit{quantize} the coefficients in $\bTheta$. Since the main goal in this paper is the design of fast algorithms for matrix-vector products, we do not consider this extra refinement.

\subsection{An algorithm when $\bTheta$ is known}

Finding the minimizer of problem \eqref{eq:finalthreshprob} can be achieved using a simple greedy algorithm: the matrix $\bS_{k+1}$ is obtained by adding the largest coefficient of the column $\bDelta_i$ with largest Euclidean norm to $\bS_k$. This procedure can be implemented efficiently by using quick sort algorithms. The complete procedure is described in Algorithm \ref{algo:thresh}. 
The overall complexity of this algorithm is $\mathcal{O}(N^{2} \log(N))$. 
The most computationally intensive step is the sorting procedure in the initialisation. 
The loop on $k$ can be accelerated by first sorting the set $(\gamma_j)_{1\leq j\leq N}$, but the algorithm's complexity remains essentially unchanged. 

\begin{algorithm}[htp]
\SetKwInput{KwInit}{Initialization}
\DontPrintSemicolon
\SetAlgoLined
  \KwIn{  \\
	$\bTheta$: $N\times N$ matrix; \\
	$\bSigma$: Diagonal matrix; \\
	$K$: the number of elements in the thresholded matrix;
  }
  \KwOut{\\ $\bS_K$: Matrix minimizing \eqref{eq:finalthreshprob}}
  \KwInit{\\
      Set $\bS_K= \mathbf{0} \in \R^{N\times N}$; \\
      Sort the coefficients of each column $\bTheta^{(j)}$ of $\bTheta$ in decreasing order; \\
      Obtain $\bA^{(j)}$ the sorted columns $\bTheta^{(j)}$ and index sets $I_j$; \\
      The sorted columns $\bA^{(j)}$ and index set $I_j$ satisfy $\bA^{(j)}(i) = \bTheta^{(j)}(I_j(i))$; \\
      Compute the norms $\gamma_j=\frac{\|\bTheta^{(j)}\|_2^2}{\sigma_j^2}$;\\
      Define $\bO=(1,\hdots,1) \in \R^{N}$; \\
      $\bO(j)$ is the index of the largest coefficient in $\bA^{(j)}$ not yet added to $\bS_K$;\\
   }
  \Begin{
     \For{$k=1$ \KwTo $K$}{
	Find $l = \displaystyle \argmax_{j=1 \ldots N} \gamma_j$ ; \\
	\textit{(Find the column $l$ with largest Euclidean norm)} \\
	Set $\bS_K(I_l(\bO(l)),l)  = \bTheta ( I_l(\bO(l)),l )$ ; \\
	\textit{(Add the coefficient in the $l$-th column at index $I_l( \bO(l) )$} \\
	Update $\displaystyle \gamma_{l} = \gamma_{l} - \left(\frac{\bA^{(l)}( \bO(l) )}{\sigma_l}\right)^2$ ; \\
	\textit{ (Update norms vector) } \\
	Set $\bO(l)=\bO(l)+1$ ; \\
	\textit{(The next value to add in $l$-th column will be at index $\bO(l)+1$)}  \\
    }
  }
\caption{An algorithm to find the minimizer of \eqref{eq:finalthreshprob}.}\label{algo:thresh}
\end{algorithm}

\subsection{An algorithm when $\bTheta$ is unknown}

In the previous paragraph, we assumed that the full matrix $\bTheta$ was known.
There are at least two reasons that make this assumption irrelevant. 
First, computing $\bTheta$ is very computationally intensive and it is not even possible to store this matrix in RAM for medium sized images (e.g. $512\times 512$). 
Second, in blind deblurring problems, the operator $\bH$ needs to be inferred from the data and adding priors on the sparsity pattern of $\bS_K$ might be an efficient choice to improve the problem identifiability.

When $\bTheta$ is unknown, we may take advantage of equation \eqref{eq:decay} to define sparsity patterns. 
A naive approach would consist in applying Algorithm \eqref{algo:thresh} directly on the upper-bound \eqref{eq:decay}. 
However, this matrix cannot be stored and this approach is applicable only for small images. 
In order to reduce the computational burden, one may take advantage of the special structure of the upper-bound: 
equation \eqref{eq:decay} indicates that the coefficients $\theta_{\lambda,\mu}$ can be discarded for sufficiently large $|j-k|$ and sufficiently large distance between the wavelet supports.
Equation \eqref{eq:decay} thus means that for a given wavelet $\psi_{\lambda}$, only its spatial neighbours in neighbouring scales have significant correlation coefficients $ \dotproduct{H \psi_{\lambda}}{\psi_{\mu}}$. 
We may thus construct sparsity patterns using the notion of multiscale neighbourhoods defined below.

\begin{definition}[Multiscale shift]
The multiscale shift $s_{\lambda, \mu} \in \mathbb{Z}^d$ between two wavelets $\psi_{\lambda}$ and $\psi_{\mu}$ is defined by
\begin{equation}\label{eq:defshift}
s_{\lambda, \mu}=\left\lfloor \frac{n}{2^{\max(k-j,0)}} \right \rfloor - \left\lfloor \frac{m}{2^{\max(j-k,0)}} \right \rfloor.
\end{equation}
\end{definition}
We recall that $\lambda =(j,m,e) \in \Lambda$ and $\mu =(k,n,e') \in \Lambda$. Note that for $k=j$, the multi-scale shift is just $s_{\lambda, \mu}=n-m$ and corresponds to the standard shift between wavelets, measured as a multiple of the characteristic size $2^{-j}$.
The divisions  by $2^{\max(k-j,0)}$ and $2^{\max(j-k,0)}$ allow to rescale the shifts at the coarsest level.
This definition is illustrated in Figure \ref{fig:multiscale_neighborhood1}.
\begin{figure}[htp]
\centering
\includegraphics[width=0.5\textwidth]{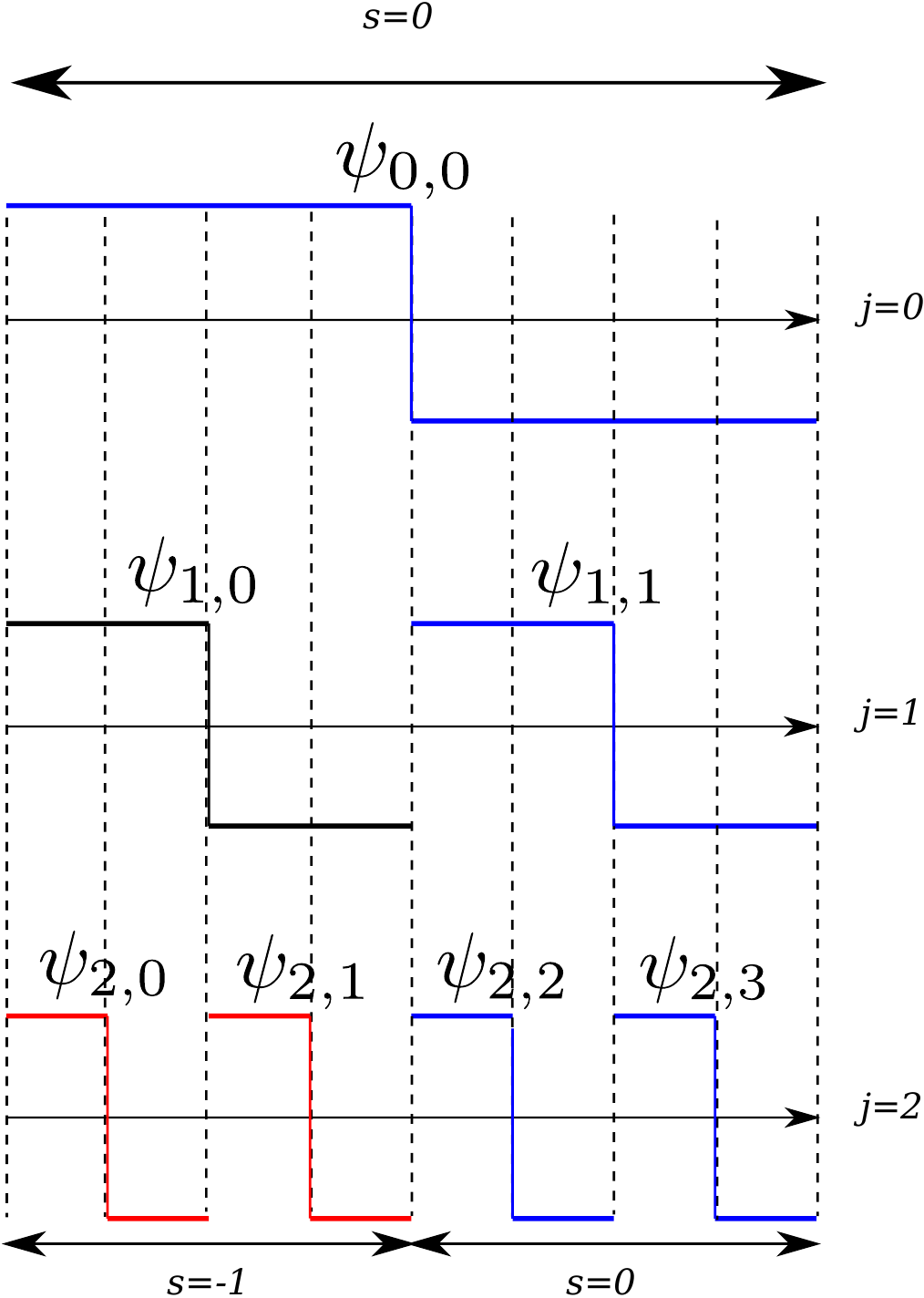}
\caption{Illustration of a multiscale shift on a 1D signal of size 8 with the Haar basis. The shifts are computed with respect to wavelet $\psi_{1,1}$. 
Wavelets $\psi_{0,0}$, $\psi_{2,2}$ and $\psi_{2,3}$ have a multiscale shift $s=0$ with $\psi_{1,1}$ since their support intersects that of $\psi_{1,1}$. Wavelets $\psi_{1,0}$, $\psi_{2,0}$ and $\psi_{2,1}$ have a multiscale shift $s=-1$ with $\psi_{1,1}$ since their support intersects that of $\psi_{1,0}$.} \label{fig:multiscale_neighborhood1}
\end{figure}

\begin{definition}[Multiscale neighborhood]
Let 
\begin{equation*}
\bN=\set{(j,(k,s)), (j,k)\in \{0,\hdots, \log_2(N)-1\}^2, s\in \{0,\hdots, 2^{\min(j,k)}-1\}^d} 
\end{equation*}
denote the set of all neighborhood relationships, i.e. the set of all possible couples of type (scale, (scale,shift)). 
A multiscale neigborhood $\mN$ is an element of the powerset $\mathcal{P}(\bN)$.
\end{definition}

\begin{definition}[Multiscale neighbors]
Given a multiscale neigborhood $\mN$, two wavelets $\psi_\lambda$ and $\psi_{\mu}$ will be said to be $\mN$-neighbors if 
$(j,(k,s_{\lambda,\mu}))\in \mN$ where $s_{\lambda,\mu}$ is defined in equation \eqref{eq:defshift}.
\end{definition}

\begin{figure}[htp]
\centering
\includegraphics[width=0.99\textwidth]{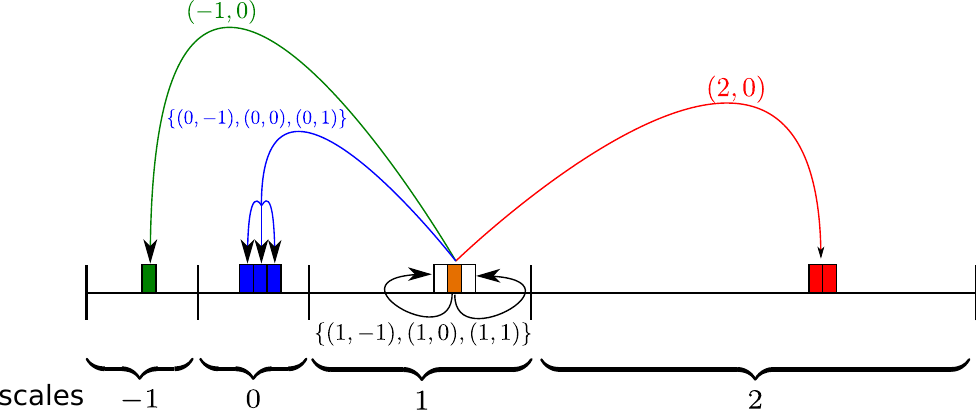}
\caption{Illustration of a multiscale neighborhood on a 1D signal. In this example, the neighborhood at scale 1 is $\mN(1) = \set{(-1,0),(0,-1),(0,0),(0,1),(1,-1),(1,0),(1,1),(2,0)}$. Notice that the two red wavelets at scale 2 are neighbors of the orange wavelet at scale 1 and that this relationship is described through only one shift.} \label{fig:multiscale_neighborhood2}
\end{figure}

The problem of finding a sparsity pattern is now reduced to finding a good multiscale neighborhood. 
In what follows, we let $\bN(j)=\{(k,s), (j,(k,s))\in \bN\}$ denote the set of all possible neighborhood relationships at scale $j$. This is illustrated in Figure \ref{fig:multiscale_neighborhood2}.
Let $\mN\in \mathcal{P}(\bN)$ denote a multiscale neighborhood. 
We define the matrix $\bS_{\mN}$ as follows:
\begin{equation*}
 \bS_{\mN}(\lambda,\mu) = \left\{ 
\begin{array}{ll} 
\theta_{\lambda, \mu} & \textrm{ if } \psi_{\lambda} \textrm{ is an } \mN \textrm{-neighbor of } \psi_{\mu}  \\ 
0 & \textrm{otherwise}.
\end{array} \right.
\end{equation*}
Equation \eqref{eq:decay} indicates that
\begin{equation*}
 |\theta_{\lambda,\mu}| \leq u(j,k,s)
\end{equation*}
with
\begin{equation} \label{eq:upper_bound_u}
u(j,k,s)=  C_M 2^{-\left(M + \frac{d}{2} \right)\abs{j-k}-\left(M + d\right)\min(j,k)} f_{j,k,s}
\end{equation}
and $f_{j,k,s}=f\left( \max \left(0, 2^{-\min(j,k)}\norm{s}_\infty - (2^{-j} + 2^{-k})\frac{c(M)}{2}\right)\right)$. 
Let $\bU$ be the matrix defined by $\bU(\lambda,\mu)= u(j,k,s_{\lambda,\mu})$. Finding a good sparsity pattern can now be achieved by solving the following problem:
\begin{equation}\label{eq:optim_multi_neigh}
\min_{ \substack{ \mN\in \mathcal{P}(\bN) \\ |\mN|=K}} \max_{1\leq i \leq N} \frac{1}{\sigma_i} \norm{ (\bU - \bS_\mN)^{(i)} }_2
\end{equation}
where $(\bU - \bS_\mN)^{(i)}$ denotes the $i$-th column of $(\bU - \bS_\mN)$. 

In what follows, we assume that $\sigma_i$ only depends on the scale $j(i)$ of the $i$-th wavelet.
Similarly to the previous section, finding the optimal sparsity pattern can be performed using a greedy algorithm.
A multiscale neighborhood is constructed by iteratively adding the couple (scale, (scale,shift)) that minimizes a residual. 
This technique is described in Algorithm \ref{algo:thresh2}.

\begin{algorithm}[htp]
\SetKwInput{KwInit}{Initialization}
\DontPrintSemicolon
\SetAlgoLined
  \KwIn{  \\
        	$u$: Upper-bound defined in \eqref{eq:upper_bound_u}; \\
        	$\bSigma$: Diagonal matrix; \\
        	$K$: the number of elements of the neighborhood;
  }
  \KwOut{\\ $\mN$: multiscale neighborhood minimizing \eqref{eq:optim_multi_neigh}}
  \KwInit{\\
      Set $\mN=\emptyset$; \\
      Compute the norms $\gamma_k=\frac{\|\bU^{(k)}\|_2^2}{\sigma_k^2}$ using the upper-bound $u$;\\
   }
  \Begin{
     \For{$k=1$ \KwTo $K$}{
	Find $j^* = \displaystyle \argmax_{j=1 \ldots N} \gamma_j$ ; \\ \textit{(The column with largest norm)} \\
	Find $(k^*,s^*) = \displaystyle \argmax_{(k,s)\in \bN(j^*)\ } u^2(j^*,k,s)2^{\max(j^*-k,0)}$ ; \\ \textit{(The best scale and shift for this column is $(k^*,s^*)$)}\\
	\textit{(The number of elements in the neighborhood relationship $(j^*,(k,s))$ is $2^{\max(j^*-k,0)}$)} \\
	Update $\displaystyle \mN=\mN\cup\{(j^*,(k^*,s^*))\}$ ; \\
	Set $\gamma_k=\gamma_k-u^2(j^*,k^*,s^*)\cdot 2^{\max(j^*-k,0)}$ \\
    }
  }
\caption{An algorithm to find the minimizer of \eqref{eq:optim_multi_neigh}.} \label{algo:thresh2}
\end{algorithm}

Note that the norms $\gamma_k$ only depend on the scale $j(k)$, so that the initialisation step only requires $\mathcal{O}(N\log_2(N))$ operations.
Similarly to Algorithm \ref{algo:thresh}, this algorithm can be accelerated by first sorting the elements of $u(j,k,s)$ in decreasing order. 
The overall complexity for this algorithm is $\mathcal{O}(N\log(N)^2)$ operations.

\section{Numerical experiments}
\label{sec:numerics}

In this section we perform various numerical experiments in order to illustrate the theory proposed in the previous sections and to compare the practical efficiency of wavelet based methods against windowed convolutions (WC) based approaches. We first describe the operators and images used in our experiments. Second, we provide numerical experiments for the direct problems. Finally, we provide numerical comparisons for the inverse problem.

\subsection{Preliminaries}

\subsubsection{Test images}
We consider a set of 16 images of different natures: standard image processing images (the boat, the house, Lena, Mandrill (see Figure \ref{im:mandrill}), peppers, cameraman), two satellite images, three medical images, three buildings images, and two test pattern images (see Figure \ref{im:letters}).
Due to memory limitations, we only consider images of size $N=256\times 256$. Note that a full matrix of size $N \times N$ stored in double precision weighs around 32 gigabytes.

\subsubsection{Test operators}
Three different blur kernels of different complexities are considered, see Figure \ref{fig:test_kernels}. 
The PSFs in Figure \ref{fig:kernel1} and \ref{fig:kernelRotation} modeled for all $x \in [0,1]^2$ by 2D Gaussians. 
Therefore the associated kernel is defined for all $(x,y) \in [0,1]^2 \times [0,1]^2$ by 
\[K(x,y) = \frac{1}{2\pi \abs{C(y)}} \exp{\left[\frac{1}{2}(y-x)^T C^{-1}(y) (y - x)\right]}.\] 
The covariance matrices $C$ are defined as:
\begin{itemize}
  \item In Figure \ref{fig:kernel1}: $C(y) = \begin{pmatrix} f(y_1) & 0 \\ 0 & f(y_1) \end{pmatrix}$ with $f(t) = 2t$, for $t \in [0,1]$. The PSFs are truncated out of a $11 \times 11$ support.
  \item In Figure \ref{fig:kernelRotation}: $C(y) = R(y)^T D(y) R(y)$ where $R(y)$ is a rotation matrix of angle $\theta=\arctan{\left(\frac{y_1 - 0.5}{y_2 - 0.5}\right)}$ and $D(y) = \begin{pmatrix}
	g(y) & 0 \\ 0 & h(y) 
\end{pmatrix}$ with $g(y) = 10 \norm{y - (0.5,0.5)^T}_2$ and  $h(y) = 2 \norm{y - (0.5,0.5)^T}_2$. 
The PSFs are truncated out of a $21 \times 21$ support.
\end{itemize}
The PSFs in Figure \ref{fig:kernel_rotation_skew} were proposed in \cite{simpkins2014parameterized} as an approximation of real spatially optical blurs.

\begin{figure}[htp]
\centering
\begin{subfigure}[b]{0.4\textwidth}
	\includegraphics[width=\textwidth]{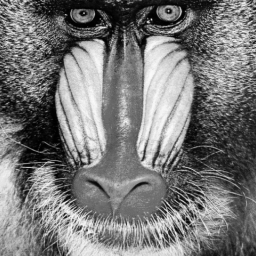}
	\caption{Mandrill} \label{im:mandrill}
\end{subfigure}
\quad
\begin{subfigure}[b]{0.4\textwidth} 
	\frame{\includegraphics[width=\textwidth]{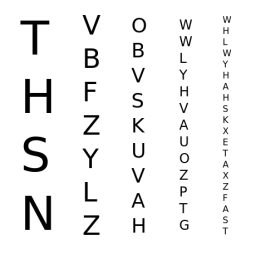}}
	\caption{Letters} \label{im:letters}
\end{subfigure}
	\caption{The two images of size $256 \times 256$ used in these numerical experiments} \label{fig:test_images}
\end{figure}

\begin{figure}[htp]
\centering
\begin{subfigure}[b]{0.30\textwidth} 
	\frame{\includegraphics[width=\textwidth]{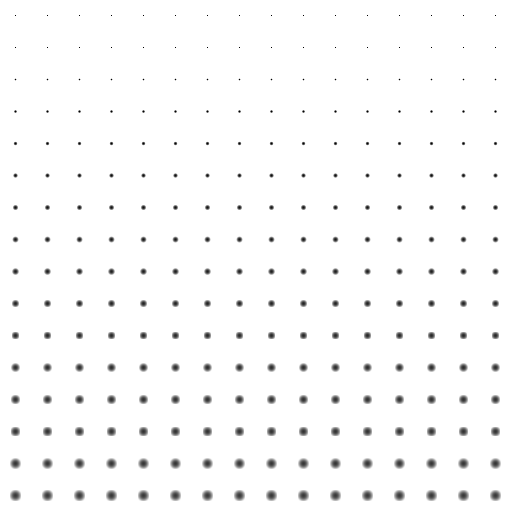}}
	\caption{} \label{fig:kernel1}
\end{subfigure}
\quad
\begin{subfigure}[b]{0.30\textwidth}
	\frame{\includegraphics[width=\textwidth]{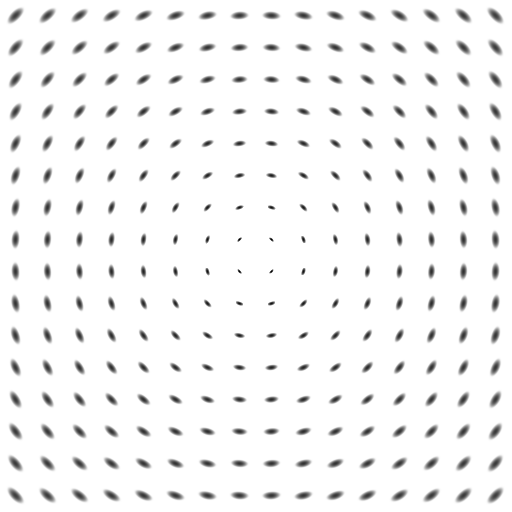}}
	\caption{} \label{fig:kernelRotation}
\end{subfigure}
\quad
\begin{subfigure}[b]{0.30\textwidth}
	\frame{\includegraphics[width=\textwidth]{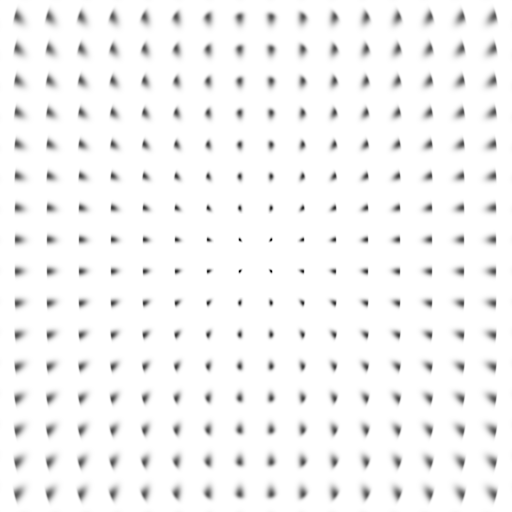}}
	\caption{} \label{fig:kernel_rotation_skew}
\end{subfigure}
	\caption{PSFs maps used in the paper. The PSFs in Figure \ref{fig:kernel1} are Gaussians with equal variances increasing in the vertical direction. The PSFs in Figure \ref{fig:kernelRotation} are anisotropic Gaussians with covariance matrices that depend on the polar coordinates. The PSFs in Figure \ref{fig:kernel_rotation_skew} are based on paper \cite{simpkins2014parameterized}.} \label{fig:test_kernels}
\end{figure}

\FloatBarrier
\subsubsection{Computation of the full $\bTheta$ matrix}
Before applying our approximation methods, matrix $\bTheta$ needs to be computed explicitly. 
The coefficients $\dotproduct{H \psi_{\lambda}}{\psi_{\mu}}$ are approximated by their discrete counterparts.
If $\bpsi_{\lambda}$ and $\bpsi_{\mu}$ denote discrete wavelets, we simply compute the wavelet transform of $\bH \bpsi_{\lambda}$ and store it into the $\lambda$-th column of $\bTheta$.
This computation scheme is summarized in Algorithm \ref{algo:btheta}. 
This algorithm corresponds to the use of rectangle methods to evaluate the dot-products:
\begin{equation}
	\int_{\Omega} \int_{\Omega} K(x,y) \psi_{\lambda}(y) \psi_{\mu}(x) dy dx \simeq \frac{1}{N^{2d}} \sum_{x \in X} \sum_{y \in X} K(x,y) \psi_{\lambda}(y) \psi_{\mu}(x).
\end{equation}

\begin{algorithm}[htp]
\SetKwInput{KwInit}{Initialization}
\DontPrintSemicolon
\SetAlgoLined
  \KwOut{\\ $\bTheta$: the full matrix of $\bH$}
  \Begin{
     \ForAll{$\lambda$}{
	Compute the wavelet $\bpsi_{\lambda}$ using an inverse wavelet transform\\
	Compute the blurred wavelet $\bH \bpsi_{\lambda}$ \\
	Compute $\left(\dotproduct{\bH \bpsi_{\lambda}}{\bpsi_{\mu}}\right)_{\mu}$ using one forward wavelet transform \\
	Set $\left(\dotproduct{\bH \bpsi_{\lambda}}{\bpsi_{\mu}}\right)_{\mu}$ in the $\lambda$-th column of $\bTheta$.
    }
  }
\caption{An algorithm to compute $\bTheta$}\label{algo:btheta}
\end{algorithm}

\subsection{Application to direct problems}\label{sec:direct_prob}

In this section, we investigate the approximation properties of the proposed approaches in the aim of computing matrix-vector products.
In all numerical experiments, we use an orthogonal wavelet transform with 4 decomposition levels. We always use Daubechies wavelets.

\subsubsection{Influence of vanishing moments} \label{sec:numerics_moments}

First of all we demonstrate the influence of vanishing moments on the quality of approximations. For each number of vanishing moments $M \in \set{1,2,4,6,10}$, a sparse approximation $\wtilde{\bH}$ is constructed by thresholding $\bTheta$, keeping the $K = l \times N$ largest coefficients with $l \in \set{0\ldots 40}$. Then for each $\bu$ in the set of 16 images, we compare $\wtilde{\bH} \bu$ to $\bH \bu$ computing the pSNR. We then plot the average of pSNRs over the set of images with respect to the number of operations needed for a matrix-vector product.
The results of this experiment are displayed in Figure \ref{fig:xp_moments}. It appears that for the considered operators, using as many vanishing moments as possible was preferable. 
Using more than 10 vanishing moments however led to insignificant performance increase while making the numerical complexity higher. Therefore, in all the following numerical experiments we will use Daubechies wavelets with 10 vanishing moments. Note that paper \cite{wei2014fast} only explored the use of Haar wavelets. This experiment shows that very significant improvements can be obtained by leveraging regularity of the integral kernel using vanishing moments. The behavior was predicted by Theorem \ref{thm:proof_thresh}.

\begin{figure}[htp]
\centering
\input{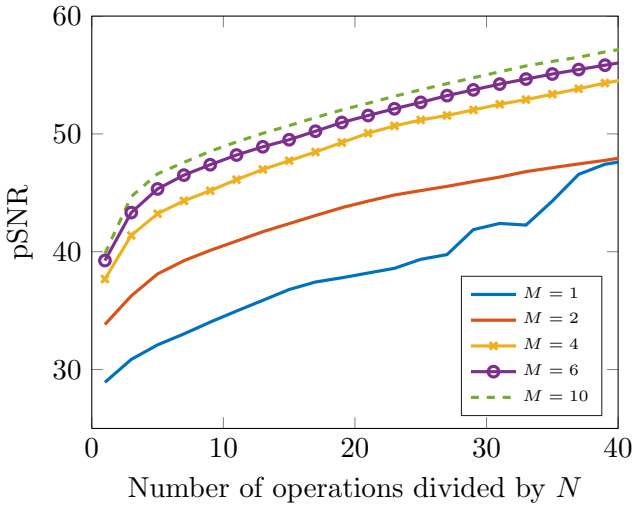}
\caption{pSNR of the blurred image using the approximated operator $\wtilde{\bH} \bu$ with respect to the blurred image using the exact operator $\bH \bu$. pSNRs have been averaged over the set of test images. Daubechies wavelets have been used with different number vanishing moments $M \in \set{1,2,4,6,10}$. The case $M=1$ corresponds to Haar wavelets.} \label{fig:xp_moments}
\end{figure}

\subsubsection{Comparison of different methods} \label{sec:comparison_direct}

\paragraph{Wavelets VS windowed convolutions.} In this first numerical experiment, we evaluate $\norm{\bH - \wtilde \bH}_{2 \to 2}$ where $\wtilde \bH$ is obtained by windowed convolutions method or sparse approximations in the wavelet domain. 

The sparse approximation of the operator is constructed by thresholding the matrix $\bTheta$ in order to keep the $K$ largest coefficients. 
We have set $K = 2^{l} \times N$ with $l \in \set{ 0 \ldots 2\log_2 N }$. This way $K$ is a multiple of the number of pixels in the image.
The windowed convolutions method is constructed by partitioning the image into $2^l \times 2^l$ sub-images where $l \in \set{ 1 \ldots \log_2 N }$. We also studied the case where sub-images overlap and linearly interpolated the blur between sub-images as proposed in \cite{nagy1997fast,hirsch2010efficient}. 
The overlap has been fixed to $ 50\% $ of the sub-images sizes. 

For each sub-image size, and each overlap, the norm $\norm{\bH - \wtilde \bH}_{2 \to 2}$ is approximated using a power method \cite{golub2012matrix}. We stop the iterative process when the difference between the eigenvalues of two successive iterations is smaller than $10^{-8} \|\bH\|_{2\to 2}$. The number of operations associated to each type of approximation is computed using theoretical complexities. For sparse matrix-vector product the number of operations is proportional to the number of non-zero coefficients in the matrix. For windowed convolutions methods, the number of operations is proportional to the number of windows ($2^l\times 2^l$) multiplied by the cost of a discrete convolution over a window $\left(\frac{N}{2^l} + N\kappa\right)^2 \log_2\left(\frac{N}{2^l} + N\kappa\right)$. 

Figure \ref{fig:normOperator} shows the results of this experiment. The wavelet based method seems to perform much better than windowed convolutions methods for both operators. The gap is however significantly larger for the rotation blur in Figure \ref{fig:kernelRotation}. This experiment therefore suggests that the advantage of wavelet based approaches will depend on the type of blur considered.

\begin{figure}[htp]
\centering
\begin{subfigure}[b]{0.48\textwidth}
%
%
\begin{tikzpicture}

\begin{axis}[%
width=0.761\textwidth,
height=0.65\textwidth,
at={(0\textwidth,0\textwidth)},
scale only axis,
xmode=log,
xmin=65536,
xmax=1315066810.39345,
xminorticks=true,
xlabel={Number of operations},
ymin=0,
ymax=0.984414407590819,
ylabel={Spectral Norm},
axis background/.style={fill=white},
legend style={legend cell align=left,align=left,draw=black,font=\tiny, at={(1.05,0.95)}}
]
\addplot [color=blue,solid,very thick,mark=x,mark options={solid}]
  table[row sep=crcr]{%
65536	0.446254862273023\\
131072	0.314857703013986\\
262144	0.179034145524103\\
524288	0.112168581230024\\
1048576	0.0706976902492774\\
2097152	0.0409946939221879\\
4194304	0.0185047324515776\\
8388608	0.00718192850515575\\
16777216	0.00214396951346818\\
33554432	0.000415413977333634\\
67108864	4.81168168386947e-05\\
134217728	8.11069086059657e-07\\
268435456	1.0339501477219e-10\\
422376336	4.21028767405573e-11\\
};
\addlegendentry{Thresholded Matrix};

\addplot [color=black!50!green,dashed,very thick,mark=x,mark options={solid}]
  table[row sep=crcr]{%
1315066810.39345	3.33066907387547e-16\\
346340052.621125	0.0194947607925858\\
95746925.3551443	0.0391522481212855\\
28896543.6916466	0.0795875998112532\\
10089599.0106159	0.156192814577784\\
4352379.72866357	0.327973210348852\\
2419037.25527911	0.58363836967208\\
1713051.18443504	0.77384650656865\\
1459790.08258224	0.984414407590819\\
};
\addlegendentry{WC, Overlap 0\%};

\addplot [color=red,very thick,mark=o,mark options={solid}]
  table[row sep=crcr]{%
1315066810.39345	3.33066907387547e-16\\
382987701.420577	0.0121341498869502\\
115586174.766586	0.011922069509159\\
40358396.0424635	0.0199812336218719\\
17409518.9146543	0.0436463863934036\\
9676149.02111643	0.100451634701922\\
6852204.73774015	0.21548195015929\\
5839160.33032897	0.490025454196642\\
1459790.08258224	0.984414407590819\\
};
\addlegendentry{WC, Overlap 50\%};

\end{axis}
\end{tikzpicture}%
\end{subfigure}
\begin{subfigure}[b]{0.48\textwidth}
%
%
\definecolor{mycolor1}{rgb}{0.00000,0.49804,0.00000}%
\begin{tikzpicture}

\begin{axis}[%
width=0.75\textwidth,
height=0.65\textwidth,
at={(0\textwidth,0\textwidth)},
scale only axis,
unbounded coords=jump,
xmode=log,
xmin=65536,
xmax=39638528373.3126,
xminorticks=true,
xlabel={Number of operations},
ymin=0,
ymax=0.998624179646799,
ylabel={Spectral Norm},
axis background/.style={fill=white},
legend style={legend cell align=left,align=left,draw=black,font=\tiny, at={(1.25,0.95)}}
]
\addplot [color=blue,solid,very thick,mark=x,mark options={solid}]
  table[row sep=crcr]{%
65536	0.368610219879878\\
131072	0.335225188646109\\
262144	0.312325665252851\\
524288	0.302622047645085\\
1048576	0.200502271750724\\
2097152	0.10694095470395\\
4194304	0.0523420967971351\\
8388608	0.0196373657539074\\
16777216	0.00550447601662102\\
33554432	0.00121719451020649\\
67108864	0.000150885505997178\\
134217728	5.67267808659899e-06\\
268435456	1.83140687337602e-09\\
493285692	4.48152626120191e-11\\
};
\addlegendentry{Thresholded Matrix};

\addplot [color=black!50!green,dashed,very thick,mark=x,mark options={solid}]
  table[row sep=crcr]{%
39638528373.3126	1.95143017617898e-09\\
10018955185.0615	0.145386502274266\\
2559882537.71534	0.311316984477992\\
668025520.888928	0.483462916923925\\
181518449.719924	0.636094080858595\\
53123382.2329546	0.816434929806987\\
17646281.2325612	0.970922029943637\\
7098155.4665525	0.99688825563715\\
3638585.73601291	nan\\
};
\addlegendentry{WC, Overlap 0\%};

\addplot [color=red,very thick,mark=o,mark options={solid}]
  table[row sep=crcr]{%
39638528373.3126	5.59708631792981e-10\\
10239530150.8613	0.104012543728068\\
2672102083.55571	0.172210122174309\\
726073798.879697	0.235965904172467\\
212493528.931818	0.424915396461683\\
70585124.9302447	0.834746166781855\\
28392621.86621	0.981537311520505\\
14554342.9440516	0.998624179646799\\
3638585.73601291	nan\\
};
\addlegendentry{WC, Overlap 50\%};

\end{axis}
\end{tikzpicture}%
\end{subfigure}
\begin{subfigure}[b]{0.48\textwidth}
%
%
\definecolor{mycolor1}{rgb}{0.00000,0.44700,0.74100}%
\definecolor{mycolor2}{rgb}{0.85000,0.32500,0.09800}%
\definecolor{mycolor3}{rgb}{0.92900,0.69400,0.12500}%
\begin{tikzpicture}

\begin{axis}[%
width=0.761\textwidth,
height=0.65\textwidth,
at={(0\textwidth,0\textwidth)},
scale only axis,
xmode=log,
xmin=65536,
xmax=24621071264.8222,
xminorticks=true,
xlabel={Number of operations},
ymin=0,
ymax=0.998441645218796,
ylabel={Spectral Norm},
yminorticks=true,
axis background/.style={fill=white},
legend style={legend cell align=left,align=left,draw=white!15!black,font=\tiny,at={(0.6,0.8)}}
]
\addplot [color=blue,solid,very thick,mark=x,mark options={solid}]
  table[row sep=crcr]{%
65536	0.485354918228642\\
131072	0.310185880007831\\
262144	0.263213652247636\\
524288	0.207682558460773\\
1048576	0.187287615812337\\
2097152	0.0979211631112355\\
4194304	0.0386851421632911\\
8388608	0.0134074164719266\\
16777216	0.00398196872003623\\
33554432	0.000947348077667873\\
67108864	5.1354186159612e-05\\
134217728	2.87473206422861e-06\\
268435456	9.53796200006685e-10\\
488679348	2.19047002758543e-13\\
};
\addlegendentry{Thresholded Matrix};

\addplot [color=black!50!green,dashed,very thick,mark=x,mark options={solid}]
  table[row sep=crcr]{%
24267773776.5152	0.145460162564421\\
6155267816.20555	0.213774181575657\\
1583490596.94258	0.422840550958196\\
418722010.807152	0.669139408309532\\
116619838.608815	0.901053753043959\\
35643321.7416618	0.97535922910597\\
12681121.3043208	0.9923018717283\\
};
\addlegendentry{WC, Overlap 0\%};

\addplot [color=red,very thick,mark=o,mark options={solid}]
  table[row sep=crcr]{%
24621071264.8222	0.11522519330331\\
6333962387.77032	0.136570253492223\\
1674888043.22861	0.233062579171239\\
466479354.435261	0.675084242872704\\
142573286.966647	0.929696293535247\\
50724485.2172833	0.992786345073446\\
22377226.3250812	0.998441645218796\\
};
\addlegendentry{WC, Overlap 50\%};

\end{axis}
\end{tikzpicture}%
\end{subfigure}
	\caption{The operator norms $\norm{\bH - \wtilde \bH}_{2 \to 2}$ are displayed for the three proposed kernels. (Left: kernel Figure \ref{fig:kernel1}, middle: kernel in Figure \ref{fig:kernelRotation}, right: kernel in Figure \ref{fig:kernel_rotation_skew}). Norms are plotted with respect to the number of operations needed to compute $\wtilde \bH \bu$. The abscissas are in log scale.} \label{fig:normOperator}
\end{figure}
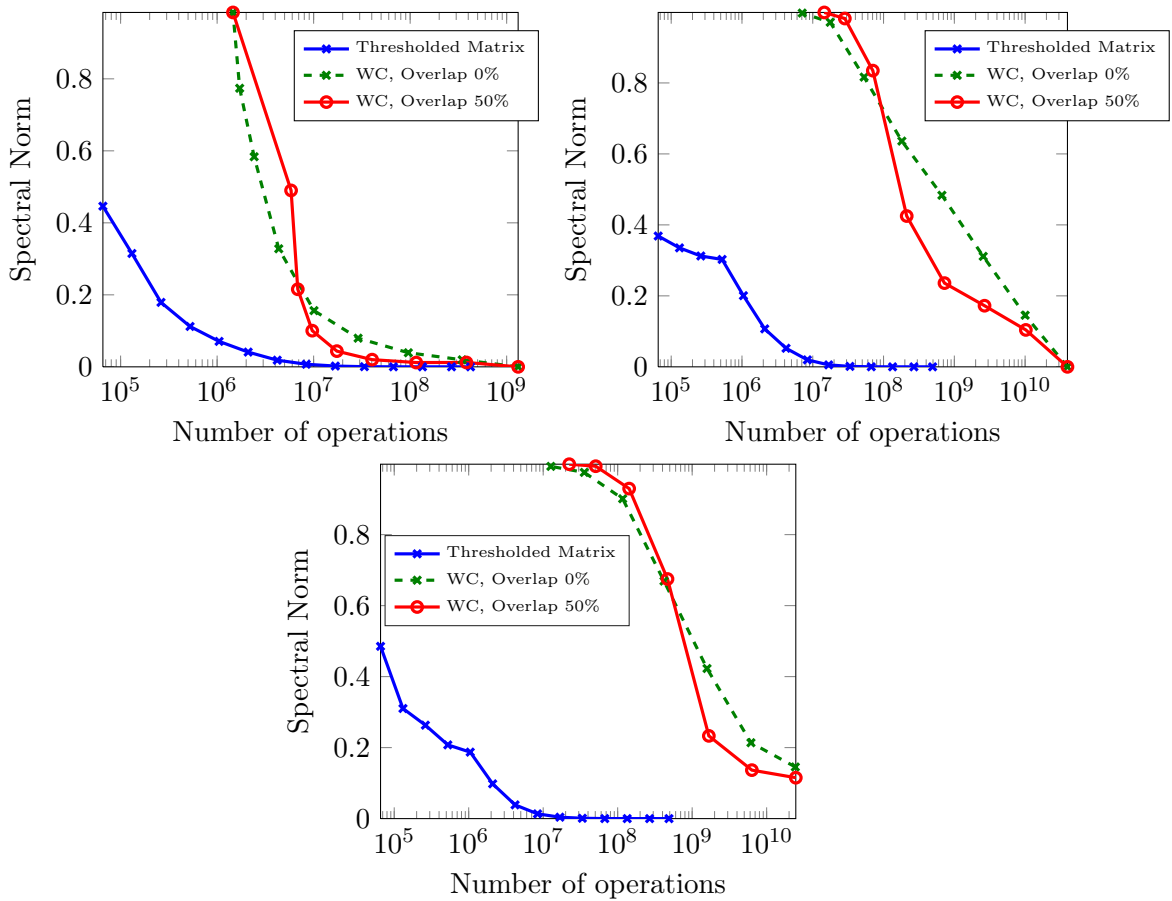

\paragraph{The influence of sparsity patterns.} In this numerical experiment, we obtain a $K$-sparse matrix $\bTheta_K$ using either a simple thresholding strategy, Algorithm \ref{algo:thresh} or Algorithm \ref{algo:thresh2}. We evaluate the error $\norm{\bH - \wtilde \bH}_{X \to 2}$ defined in \eqref{eq:normX-2} for each methods. We set $\sigma_i=2^{j(i)}$, where $j(i)$ corresponds to the scale of the $i$-th wavelet. As can be seen from Figure \ref{fig:normBesovOperator}, Algorithm \ref{algo:thresh} provides a much better error decay for each operator than the simple thresholding strategy. 
This fact will be verified for real images in next paragraph.
Algorithm \ref{algo:thresh2} has a much slower decay than both thresholding algorithm. 
Notice that this algorithm is essentially blind, in the sense that it does not require knowing the exact matrix $\bTheta$ to select the pattern. 
It would therefore work for a whole class of blur kernels, whereas the simple thresholding and Algorithm \ref{algo:thresh} work only for a specific matrix.

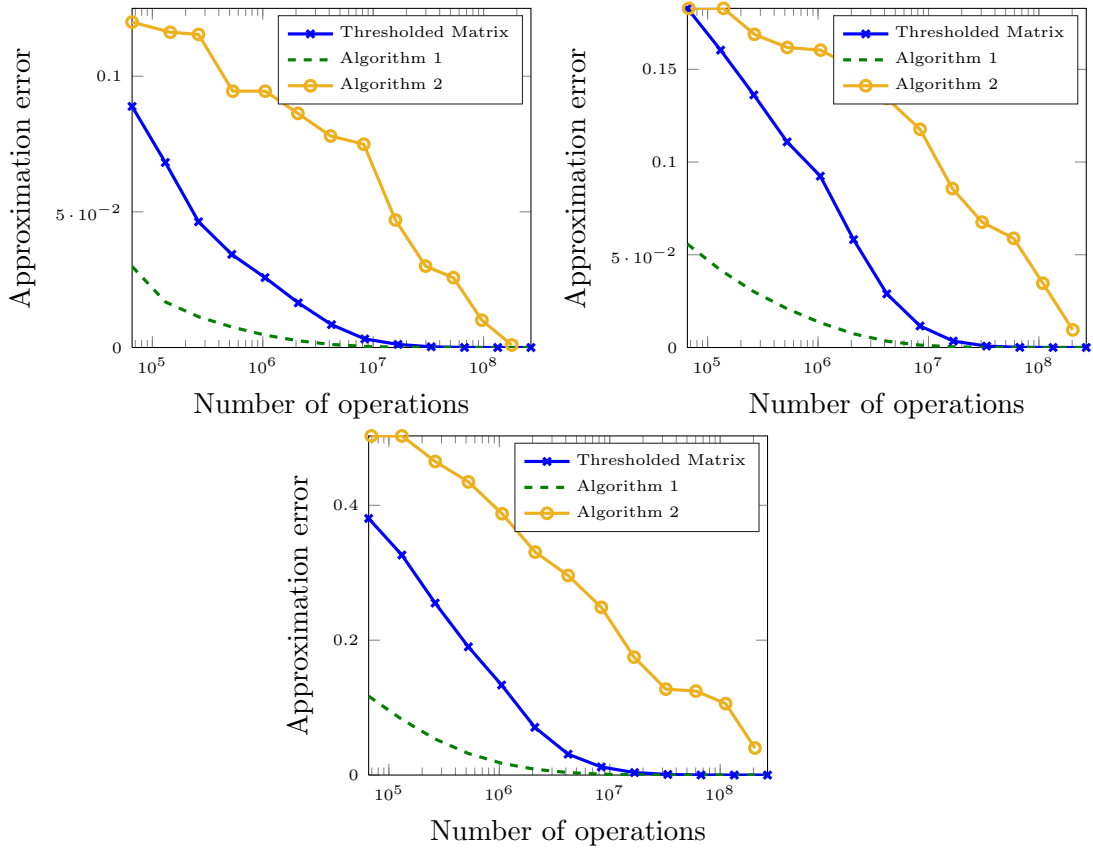
\begin{figure}[htp]
\centering
\begin{subfigure}[b]{0.48\textwidth}
%
%
\definecolor{mycolor1}{rgb}{0.00000,0.44700,0.74100}%
\definecolor{mycolor2}{rgb}{0.85000,0.32500,0.09800}%
\definecolor{mycolor3}{rgb}{0.92900,0.69400,0.12500}%
\begin{tikzpicture}

\begin{axis}[%
width=0.731\textwidth,
height=0.622\textwidth,
at={(0\textwidth,0\textwidth)},
scale only axis,
xmode=log,
xmin=65536,
xmax=268435456,
xminorticks=true,
tick label style={font=\tiny},
xlabel={Number of operations},
ymin=0,
ymax=0.125000169957959,
yminorticks=true,
ylabel={Approximation error},
axis background/.style={fill=white},
legend style={legend cell align=left,align=left,draw=white!15!black,font=\tiny}
]
\addplot [color=blue,solid,very thick,mark=x,mark options={solid}]
  table[row sep=crcr]{%
65536	0.0888584003643672\\
131072	0.0682082429811505\\
262144	0.0463502868963712\\
524288	0.034327111620532\\
1048576	0.0257699450436875\\
2097152	0.0164644718196296\\
4194304	0.0084304402760991\\
8388608	0.00315327208434122\\
16777216	0.00114356764888351\\
33554432	0.000255116922137302\\
67108864	2.22254684490316e-05\\
134217728	3.96278774951832e-07\\
268435456	2.45250423862459e-10\\
};
\addlegendentry{Thresholded Matrix};

\addplot [color=black!50!green,dashed,very thick]
  table[row sep=crcr]{%
65536	0.0298837371958821\\
131072	0.0168154309058722\\
262144	0.0113256025288182\\
524288	0.00756084543088517\\
1048576	0.00457596593883503\\
2097152	0.00248362166409551\\
4194304	0.00111187358528503\\
8388608	0.000413223767852354\\
16777216	0.000125725759377502\\
33554432	2.46438359127917e-05\\
67108864	2.12031728447915e-06\\
134217728	2.55175389504706e-07\\
268435456	2.54294448579562e-07\\
};
\addlegendentry{Algorithm \ref{algo:thresh}};

\addplot [color=mycolor3,solid,very thick,mark=o,mark options={solid}]
  table[row sep=crcr]{%
65536	0.119954398040093\\
145408	0.116203165875487\\
261888	0.115406186189971\\
534976	0.0944886315654302\\
1052160	0.0944886315654302\\
2079872	0.0862935107857968\\
4111136	0.0779726473652421\\
8218752	0.0749225129225072\\
15901424	0.0470006609166888\\
29817520	0.0300103333332411\\
53230192	0.0257444345053762\\
96534512	0.010067125890768\\
179753856	0.000903743521493812\\
};
\addlegendentry{Algorithm \ref{algo:thresh2}};

\end{axis}
\end{tikzpicture}%
\end{subfigure}
\begin{subfigure}[b]{0.48\textwidth}
%
%
\definecolor{mycolor1}{rgb}{0.00000,0.44700,0.74100}%
\definecolor{mycolor2}{rgb}{0.85000,0.32500,0.09800}%
\definecolor{mycolor3}{rgb}{0.92900,0.69400,0.12500}%
\begin{tikzpicture}

\begin{axis}[%
width=0.731\textwidth,
height=0.622\textwidth,
at={(0\textwidth,0\textwidth)},
scale only axis,
xmode=log,
xmin=65536,
xmax=268435456,
xminorticks=true,
tick label style={font=\tiny},
xlabel={Number of operations},
ymin=0,
ymax=0.182884143188321,
yminorticks=true,
ylabel={Approximation error},
axis background/.style={fill=white},
legend style={legend cell align=left,align=left,draw=white!15!black, font=\tiny}
]
\addplot [color=blue,solid,very thick,mark=x,mark options={solid}]
  table[row sep=crcr]{%
65536	0.182884143188321\\
131072	0.16036340732461\\
262144	0.136208780997275\\
524288	0.110875036642239\\
1048576	0.0924130504985174\\
2097152	0.0581732261325165\\
4194304	0.0289296949051342\\
8388608	0.0116205580534185\\
16777216	0.00345513246930612\\
33554432	0.00076736779628973\\
67108864	8.33954111151558e-05\\
134217728	2.47503177311827e-06\\
268435456	3.90020914870588e-09\\
};
\addlegendentry{Thresholded Matrix};

\addplot [color=black!50!green,dashed,very thick]
  table[row sep=crcr]{%
65536	0.0558089403940646\\
131072	0.0417171399848174\\
262144	0.0300888849622725\\
524288	0.0209313672437054\\
1048576	0.0133661841796302\\
2097152	0.0073900315789433\\
4194304	0.00341264182539281\\
8388608	0.00128796967529825\\
16777216	0.000381422039975081\\
33554432	7.57882903755172e-05\\
67108864	7.83672783139564e-06\\
134217728	3.37826563099241e-07\\
268435456	2.67351341874918e-07\\
};
\addlegendentry{Algorithm \ref{algo:thresh}};

\addplot [color=mycolor3,solid,very thick,mark=o,mark options={solid}]
  table[row sep=crcr]{%
67584	0.182884143188321\\
139264	0.182884143188321\\
264960	0.168880352845242\\
525824	0.161773536224027\\
1053696	0.160384717102864\\
2100736	0.150808123841209\\
4209664	0.134256469715825\\
8389632	0.117675270228437\\
16453488	0.0857091911769237\\
30607743	0.0675942962760025\\
59008348	0.0588620554958665\\
108585392	0.0346517720439939\\
202828634	0.00946671987587235\\
};
\addlegendentry{Algorithm \ref{algo:thresh2}};

\end{axis}
\end{tikzpicture}%
\end{subfigure}

\begin{subfigure}[b]{0.48\textwidth}
%
%
\definecolor{mycolor1}{rgb}{0.00000,0.44700,0.74100}%
\definecolor{mycolor2}{rgb}{0.85000,0.32500,0.09800}%
\definecolor{mycolor3}{rgb}{0.92900,0.69400,0.12500}%
\begin{tikzpicture}

\begin{axis}[%
width=0.731\textwidth,
height=0.622\textwidth,
at={(0\textwidth,0\textwidth)},
scale only axis,
xmode=log,
xmin=65536,
xmax=268435456,
xminorticks=true,
tick label style={font=\tiny},
xlabel={Number of operations},
ymin=0,
ymax=0.502873895371765,
yminorticks=true,
ylabel={Approximation error},
axis background/.style={fill=white},
legend style={legend cell align=left,align=left,draw=white!15!black,font=\tiny}
]
\addplot [color=blue,solid,very thick,mark=x,mark options={solid}]
  table[row sep=crcr]{%
65536	0.380533563217209\\
131072	0.32624523846054\\
262144	0.255079262744851\\
524288	0.190117484793674\\
1048576	0.13349948195938\\
2097152	0.0707742644500429\\
4194304	0.030947795633546\\
8388608	0.0119931284079998\\
16777216	0.00368622802884437\\
33554432	0.000860516567022863\\
67108864	8.52323700234465e-05\\
134217728	2.28647541381975e-06\\
268435456	3.10490114549402e-09\\
};
\addlegendentry{Thresholded Matrix};

\addplot [color=black!50!green,dashed,very thick]
  table[row sep=crcr]{%
65536	0.117047923484813\\
131072	0.0824832563183131\\
262144	0.0534604139633752\\
524288	0.0320290520866279\\
1048576	0.017507677342871\\
2097152	0.00862636550127428\\
4194304	0.00368865849941645\\
8388608	0.00129581259047169\\
16777216	0.000377688813881235\\
33554432	7.93944413406549e-05\\
67108864	7.9810538218937e-06\\
134217728	4.04311429996231e-07\\
268435456	3.56993473374943e-07\\
};
\addlegendentry{Algorithm \ref{algo:thresh}};

\addplot [color=mycolor3,solid,very thick,mark=o,mark options={solid}]
  table[row sep=crcr]{%
69376	0.502873895371765\\
131072	0.502853991712991\\
262912	0.464956589444791\\
525056	0.434713276980079\\
1059840	0.38749358532462\\
2113280	0.330604100437448\\
4197119	0.295946324092019\\
8393726	0.248649653993522\\
16599037	0.174916920804258\\
32259081	0.127408250158811\\
60307064	0.124525143006556\\
112092136	0.105906783379061\\
206563400	0.0400196423062025\\
};
\addlegendentry{Algorithm \ref{algo:thresh2}};

\end{axis}
\end{tikzpicture}%
\end{subfigure}
	\caption{The operator norms $\norm{\bH - \wtilde \bH}_{X \to 2}$ are displayed for kernels Figure \ref{fig:kernel1} (left) and Figure \ref{fig:kernelRotation} (right); and with respect to the number of operations needed to compute $\wtilde \bH u$. The abscissas are in log scale. Daubechies wavelets with 10 vanishing moments have been used.} \label{fig:normBesovOperator}
\end{figure}

Figure \ref{fig:structures} shows the sparsity patterns of matrices obtained with Algorithms \ref{algo:thresh} and \ref{algo:thresh2} for $K=30 N$ and $K=128 N$ coefficients. The sparsity patterns look quite similar.
However, Algorithm \ref{algo:thresh} selects subbands that are not selected by Algorithm \ref{algo:thresh2}, which might explain the significant performance differences. 
Similarly, Algorithm \ref{algo:thresh2} select subbands that would probably be crucial for some blur kernels, but which are not significant for this particular blur kernel.

\begin{figure}[htp]
\centering
\begin{subfigure}[b]{0.45\textwidth}\includegraphics[width=\textwidth]{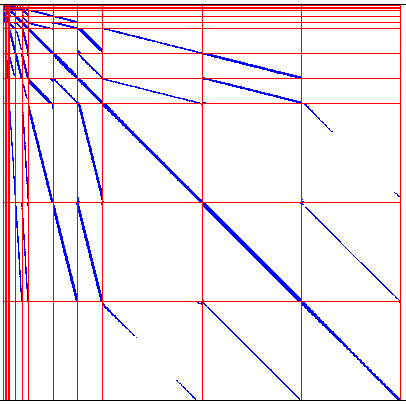} 
\caption{Algorithm \ref{algo:thresh} -- $K = 30 N$}
\end{subfigure}
\quad
\begin{subfigure}[b]{0.45\textwidth}\includegraphics[width=\textwidth]{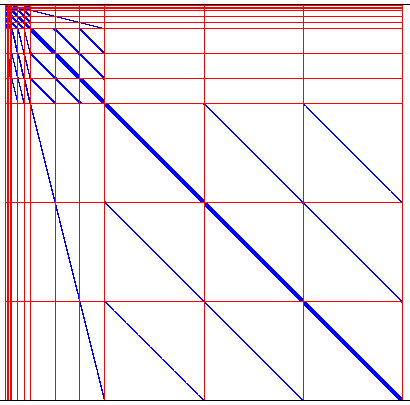}
\caption{Algorithm \ref{algo:thresh2} -- $K = 30 N$}
\end{subfigure}

\begin{subfigure}[b]{0.45\textwidth}\includegraphics[width=\textwidth]{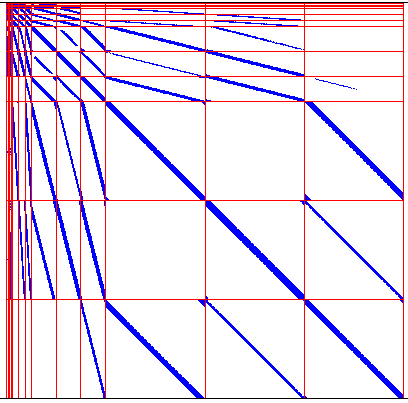}
\caption{Algorithm \ref{algo:thresh} -- $K = 128 N$}
\end{subfigure}
\quad
\begin{subfigure}[b]{0.45\textwidth}\includegraphics[width=\textwidth]{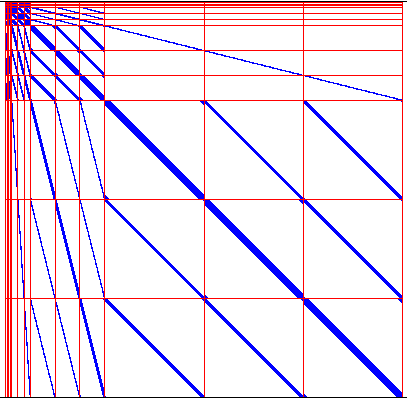}
\caption{Algorithm \ref{algo:thresh2} -- $K = 128 N$}
\end{subfigure}
\caption{The structure of the wavelet matrices of $\bTheta_K$ are displayed for Algorithms \ref{algo:thresh} and \ref{algo:thresh2} and for $K = 30 N$ and $K = 128 N$ coefficients. Algorithm \ref{algo:thresh} has been applied using the second $\bSigma = \textrm{diag}(2^{j(i)})_i$ matrix. This experiment corresponds to the blur in Figure \ref{fig:kernelRotation}} \label{fig:structures}
\end{figure}

\FloatBarrier
\subsubsection{Quality of matrix-vector products for real images}

In this section, we evaluate the performance of wavelet based methods for matrix-vector products with real images. 

\paragraph{Quality VS complexity.}
We compare $\wtilde \bH \bu$ to $\bH \bu$, where $\bu$ is the image in Figure \ref{im:letters} and where $\wtilde \bH$ is obtained either by windowed convolutions methods or by sparse wavelet approximations. 
We plot the pSNR between the exact blurred image $\bH \bu$ and the blurred image using the approximated operator $\wtilde \bH \bu$ in Figure \ref{fig:pSNR_Direct}. Different approximation methods are tested:
\begin{description}[font=\normalfont]
 \item[\textit{Thresholded matrix}:] This corresponds to a simple thresholding of the wavelet matrix $\bTheta$.
 \item[\textit{$\bSigma$ n$^\circ$1}:] This corresponds to applying Algorithm \ref{algo:thresh} with $\sigma_{i} = 1, \ \forall i$ where $j(i)$ corresponds to the scale of the $i$-th wavelet.
 \item[\textit{$\bSigma$ n$^\circ$2}:] This corresponds to applying Algorithm \ref{algo:thresh} with $\sigma_{i} = 2^{j(i)} \ \forall i$.
 \item[\textit{\cite{wei2014fast}}:] The method presented in \cite{wei2014fast} with  $K = l \times N$ coefficients in the matrix, with $l\in \set{1,\ldots,100}$.
 \item[\textit{WC, Overlap 50\%}:] This corresponds to the windowed convolution with 50\% overlap. We use this overlap since it produces better pSNRs.
 \item[\textit{Algo \ref{algo:thresh2}}:]  The algorithm finds multi-scale neighbourhoods until $K = l \times N$ coefficients populate the matrix, with $l\in \set{1,\ldots,100}$. In this experiment, we set $M=1$, $f(t)=\frac{1}{1+t}$ and $\sigma_{i} = 2^{j(i)}, \ \forall i$.
\end{description}

The pSNRs are averaged over the set of 16 images. The results of this experiment are displayed in Figure \ref{fig:pSNR_Direct} for the two kernels from Figures \ref{fig:kernelRotation} and \ref{fig:kernel1}. 
Let us summarize the conclusions from this experiment: 
\begin{itemize}
 \item A clear fact is that windowed convolution methods are significantly outperormed by wavelet based methods for all blur kernels. Moreover, the differences between wavelet and windowed convolution based methods get larger as the blurs regularity decreases.  
 \item A second result is that wavelet based methods with fixed sparsity patterns (Algo \ref{algo:thresh2}) are quite satisfactory for very sparse patterns (i.e. less than $20N$ operations) and kernels \ref{fig:kernel1} and \ref{fig:kernelRotation}. We believe that the most important regime for applications is in the range $[N,20N]$, so that this result is rather positive. However, Algo \ref{algo:thresh2} suffers from two important drawbacks: first, the increase in SNR after a certain value becomes very slow. Second, this algorithm provides very disappointing results for the last blur map \ref{fig:kernel_rotation_skew}. These results suggest that this method should be used with caution if one aims at obtaining very good approximations. In particular, the algorithm is dependent on the bound \eqref{eq:decay} which itself depends on user given parameters such as function $f$ in \eqref{def:blurring_operators:PSFsmoothness}. Modifying those parameters might result in better results, but is usually hard to tweak manually.
 \item The methods $\bSigma$ n$^\circ$ 1, $\bSigma$ n$^\circ$ 2, \textit{Thresholded matrix} all behave similarly. Method $\bSigma$ n$^\circ$ 1 is however significantly better, showing the importance of choosing the weights $\sigma_i$ in equation \eqref{eq:finalthreshprob} carefully.
 \item The methods $\bSigma$ n$^\circ$ 1, $\bSigma$ n$^\circ$ 2, \textit{Thresholded matrix} outperform the method proposed in \cite{wei2014fast} for very sparse patterns ($<20N$) and get outperformed for mid-range sparisfication $>40N$. The main difference between algorithm \cite{wei2014fast} and the methods proposed in this paper is the number of vanishing moments. In \cite{wei2014fast}, the authors propose using the Haar wavelet (i.e. 1 vanishing moment), while we use Daubechies wavelets with 10 vanishing moments. In practice, this results in better approximation properties in the very sparse regime, which might be the most important in applications. For mid-range sparsification, the Haar wavelet provides better results. Two reasons might explain this phenomenon: first, Haar wavelets have a small spatial support, therefore matrix $\bTheta$ contains less non-zero coefficients when expressed with Haar wavelets than Daubechies wavelets. Second, the constants $C_M'$ and $C_M''$ in Theorem \eqref{thm:proof_thresh} are increasing functions of the number of vanishing moments. 
\end{itemize}

\paragraph{Illustration of artefacts.}

Figure \ref{fig:comparisonPC_WB} provides a comparison of the windowed convolutions methods and the wavelet based approach in terms of approximation quality and computing times. 
The following conclusions can be drawn from this experiment:
\begin{itemize}
 \item The residual artefacts appearing in the windowed convolutions approach and wavelet based approach are different. They are localized at the interfaces between sub-images for the windowed convolutions approach while they span the whole image domain for the wavelet based approach. It is likely that using translation and/or rotation invariant wavelet would improve the result substantially.
 \item The approximation using the second $\bSigma$ matrix produces the best results and should be preferred over more simple approaches.
 \item In our implementation, the windowed convolutions approach (implemented in C) is  outperformed by the wavelet based method (implemented in Matlab with C-mex files). For instance, for a precision of 45dBs, the wavelet based approach is about 10 times faster. 
 \item The computing time of $1.21$ seconds for the windowed convolutions approach with a $2\times 2$ partition might look awkward since the computing times are significantly lower for finer partitions. This is because the efficiency of FFT methods depend greatly on the image size. The time needed to compute an FFT is usually lower for sizes that have a prime factorization comprising only small primes (e.g. less than 7). This phenomenon explains the fact that the practical complexity of windowed convolutions algorithms may increase in a chaotic manner with respect to $m$.
\end{itemize}

\begin{figure}[htp]
\centering
\begin{subfigure}[b]{0.42\textwidth}
%
%
\definecolor{mycolor1}{rgb}{0.00000,0.44700,0.74100}%
\definecolor{mycolor2}{rgb}{0.85000,0.32500,0.09800}%
\definecolor{mycolor3}{rgb}{0.92900,0.69400,0.12500}%
\definecolor{mycolor4}{rgb}{0.49400,0.18400,0.55600}%
\definecolor{mycolor5}{rgb}{0.46600,0.67400,0.18800}%
\definecolor{mycolor6}{rgb}{0.30100,0.74500,0.93300}%
\definecolor{mycolor7}{rgb}{0.87059,0.49020,0.00000}%
\begin{tikzpicture}

\begin{axis}[%
width=0.951\textwidth,
height=0.804\textwidth,
at={(0\textwidth,0\textwidth)},
scale only axis,
xmin=0,
xmax=100,
xlabel={Number of operations divided by $N$},
ymin=30,
ymax=100,
ylabel={pSNR},
axis background/.style={fill=white},
legend style={at={(0.03,0.97)},anchor=north west,legend cell align=left,align=left,draw=white!15!black,font=\tiny}
]
\addplot [color=mycolor1,solid, very thick]
  table[row sep=crcr]{%
1	38.3737655092748\\
5	42.3609799677523\\
9	43.8868892209351\\
13	47.3343845317157\\
17	49.4010141662496\\
21	50.4650780924243\\
25	51.236109854016\\
29	52.3199383546051\\
33	53.900781193337\\
37	55.4018804671553\\
41	55.9596249043284\\
45	56.9850783082451\\
49	57.5312708391459\\
53	57.9404391498826\\
57	58.3342739466836\\
61	59.2404298834065\\
65	59.9563041991075\\
69	61.1014058913323\\
73	61.7144758199929\\
77	61.9786673824602\\
81	62.6544538393127\\
85	62.3828734196056\\
89	62.4431464422887\\
93	62.8380060259178\\
97	63.2364924317514\\
};
\addlegendentry{Thresholded Matrix};

\addplot [color=mycolor2,solid, very thick, mark=x]
  table[row sep=crcr]{%
1	37.9425601708378\\
5	42.2344990031582\\
9	43.0350854027493\\
13	43.4964917533029\\
17	43.7761954868729\\
21	44.5189668210504\\
25	46.2574898583631\\
29	48.4563674952982\\
33	50.1746071019458\\
37	50.4354641203378\\
41	51.4220410355927\\
45	52.6414068146064\\
49	52.794209051537\\
53	54.8894444593633\\
57	54.8340611900775\\
61	55.799129309698\\
65	57.1900320033437\\
69	57.4360825754678\\
73	58.6790611686254\\
77	59.5409721681366\\
81	60.2268425727356\\
85	60.0932128000088\\
89	60.2431338440255\\
93	60.4393853007391\\
97	62.0350078565629\\
};
\addlegendentry{$\bSigma$ $\text{n}^\circ\text{1}$};

\addplot [color=mycolor3,solid, very thick, mark=o]
  table[row sep=crcr]{%
1	38.9390552600598\\
5	46.8174895104493\\
9	49.4715906868721\\
13	51.5118587432975\\
17	53.2682824862763\\
21	54.8771879120294\\
25	56.2476535610529\\
29	57.3198932565004\\
33	58.4436304984217\\
37	59.4459672109116\\
41	60.4004108738815\\
45	61.3302678623594\\
49	62.2244280104397\\
53	63.08241658929\\
57	63.9525460981033\\
61	64.696358900534\\
65	65.5020501869541\\
69	66.1439945159591\\
73	66.7585358248292\\
77	67.3785903600714\\
81	68.0197011865665\\
85	68.6056835119602\\
89	69.2225899966014\\
93	69.7452046697382\\
97	70.2902958306991\\
};
\addlegendentry{$\bSigma$ $\text{n}^\circ\text{2}$};

\addplot [color=mycolor5,dotted, very thick]
  table[row sep=crcr]{%
1	27.923508901857\\
5	35.8238136176856\\
9	40.3766765762074\\
13	44.1109548719989\\
17	46.7720881310213\\
21	49.13236202486\\
25	51.2728845802231\\
29	53.8497615792038\\
33	56.6226004165184\\
37	59.8017361208358\\
41	64.2415313249062\\
45	68.800424471195\\
49	71.7135001314023\\
53	75.1464270740385\\
57	79.1536544588186\\
61	82.3993864575468\\
65	85.6948722613662\\
69	89.1985146568623\\
73	93.6175000521997\\
77	98.4396236733902\\
81	103.300161551367\\
85	107.98640462554\\
89	114.217900950707\\
93	121.127415644683\\
97	127.599599505161\\
};
\addlegendentry{\cite{wei2014fast}};

\addplot [color=mycolor6,dashed, very thick]
  table[row sep=crcr]{%
1	31.9060872774412\\
5	42.0429394100431\\
9	42.6995650201535\\
13	43.725576136002\\
17	44.2295704201468\\
21	44.2857841355936\\
25	44.4282494565143\\
29	44.5686559258873\\
33	44.6423276430479\\
37	44.7004629368023\\
41	44.8370214602251\\
45	45.0048246821164\\
49	45.0915441087839\\
53	45.1069451671724\\
57	45.2452727743759\\
61	45.2890057757044\\
65	45.2910905217892\\
69	45.3670249401526\\
73	45.4289950326762\\
77	45.4277418806428\\
81	45.4304578168263\\
85	45.4279783258076\\
89	45.4286843943939\\
93	45.4335057466767\\
97	45.8002336570791\\
};
\addlegendentry{Algo \ref{algo:thresh2}};

\addplot [color=black,solid, very thick,mark=square]
  table[row sep=crcr]{%
17.394	32.0751947274113\\
69.575	42.7098723492563\\
66.101	49.8172119495881\\
66.412	59.2892043935764\\
74.313	68.1898037082421\\
99.297	73.5915563023645\\
168.86	75.0897348119541\\
};
\addlegendentry{WC, Overlap 50\%};

\end{axis}
\end{tikzpicture}%
\end{subfigure}
\qquad\qquad
\begin{subfigure}[b]{0.42\textwidth}
%
%
\definecolor{mycolor1}{rgb}{0.00000,0.44700,0.74100}%
\definecolor{mycolor2}{rgb}{0.85000,0.32500,0.09800}%
\definecolor{mycolor3}{rgb}{0.92900,0.69400,0.12500}%
\definecolor{mycolor4}{rgb}{0.49400,0.18400,0.55600}%
\definecolor{mycolor5}{rgb}{0.46600,0.67400,0.18800}%
\definecolor{mycolor6}{rgb}{0.30100,0.74500,0.93300}%
\definecolor{mycolor7}{rgb}{0.87059,0.49020,0.00000}%
\begin{tikzpicture}

\begin{axis}[%
width=0.951\textwidth,
height=0.804\textwidth,
at={(0\textwidth,0\textwidth)},
scale only axis,
unbounded coords=jump,
xmin=0,
xmax=100,
xlabel={Number of operations divided by $N$},
ymin=25,
ymax=75,
ylabel={pSNR},
axis background/.style={fill=white},
legend style={at={(0.03,0.97)},anchor=north west,legend cell align=left,align=left,draw=white!15!black,font=\tiny}
]
\addplot [color=mycolor1,solid,very thick]
  table[row sep=crcr]{%
1	33.3412922044385\\
5	35.2856522292757\\
9	38.7878235568977\\
13	41.0097231915556\\
17	42.3940910722387\\
21	43.1723541954876\\
25	43.765253533487\\
29	44.5219112899715\\
33	44.9456526725852\\
37	45.6214460986524\\
41	46.265880050114\\
45	46.7594246162776\\
49	47.3087337179771\\
53	47.9541228359501\\
57	48.3186051754158\\
61	48.7543888090019\\
65	49.2694268406547\\
69	49.6408755923933\\
73	50.0800395111575\\
77	50.4583722147827\\
81	51.0253094331269\\
85	51.569058491467\\
89	51.6458026736166\\
93	52.1142527835026\\
97	52.4190964447564\\
};
\addlegendentry{Thresholded Matrix};

\addplot [color=mycolor2,solid, very thick, mark=x]
  table[row sep=crcr]{%
1	32.9447704088915\\
5	34.3018465293273\\
9	34.814751220733\\
13	35.1359062802291\\
17	35.3556821244817\\
21	35.8820002648708\\
25	37.0675524700065\\
29	38.2236659482106\\
33	39.666249909236\\
37	41.1522125789804\\
41	42.7448473435852\\
45	43.843812049129\\
49	44.9743313744475\\
53	46.0837725024288\\
57	46.6998844698713\\
61	47.3963232213995\\
65	47.9690327632597\\
69	48.6185174073685\\
73	48.8836942549563\\
77	49.7408827175451\\
81	50.3230819555685\\
85	50.8024249824292\\
89	50.9223348696545\\
93	51.60236818212\\
97	52.1000168380007\\
};
\addlegendentry{$\bSigma$ $\text{n}^\circ\text{1}$};

\addplot [color=mycolor3,solid, very thick, mark=o]
  table[row sep=crcr]{%
1	33.9418364716922\\
5	39.7016305103405\\
9	41.8713469328484\\
13	43.3894981778886\\
17	44.7933698757962\\
21	45.9361119376355\\
25	47.0828769389484\\
29	48.1888861094176\\
33	49.218025610404\\
37	50.1478163240477\\
41	51.0422404213205\\
45	51.8804641633854\\
49	52.6687676420034\\
53	53.4136710258217\\
57	54.1643103337481\\
61	54.8680658533602\\
65	55.5668962382774\\
69	56.1989591010147\\
73	56.832959165915\\
77	57.4291452205281\\
81	57.9969219031146\\
85	58.5477872289298\\
89	59.0523646013182\\
93	59.5865627342403\\
97	60.1194689036537\\
};
\addlegendentry{$\bSigma$ $\text{n}^\circ\text{2}$};

\addplot [color=mycolor5,dotted, very thick]
  table[row sep=crcr]{%
1	26.6085848143243\\
5	32.7180602916841\\
9	35.8782868901111\\
13	38.5302614149853\\
17	40.6088893179353\\
21	42.6938222753724\\
25	44.6465214270242\\
29	46.4986276594591\\
33	48.0943809034439\\
37	49.7113299647658\\
41	51.1820308020846\\
45	52.5690990030626\\
49	53.9450430225951\\
53	55.3185792483038\\
57	56.4412707711682\\
61	57.8834527924044\\
65	59.1593503459874\\
69	60.5172903403765\\
73	61.9660855951205\\
77	63.6510469369794\\
81	65.299010272822\\
85	66.7937628780399\\
89	68.3506040125285\\
93	69.7583708496091\\
97	71.1919979684648\\
};
\addlegendentry{\cite{wei2014fast}};

\addplot [color=mycolor6,dashed, very thick]
  table[row sep=crcr]{%
1	35.1614331204701\\
5	38.8212134761636\\
9	39.3955179105383\\
13	39.9593227573035\\
17	40.176506623502\\
21	40.2531398199156\\
25	40.3538857686471\\
29	40.4320153862288\\
33	40.4803091723882\\
37	40.5464543702832\\
41	40.5551380296644\\
45	40.4315238023422\\
49	40.4524621417548\\
53	40.6098831974173\\
57	40.7810896523393\\
61	40.7987163563746\\
65	40.8770907940169\\
69	40.9187694788969\\
73	40.7702188752685\\
77	40.7723643805551\\
81	40.9892255550582\\
85	40.996436723698\\
89	40.9685529905661\\
93	41.0117418067836\\
97	41.1554714539667\\
};
\addlegendentry{Algo \ref{algo:thresh2}};

\addplot [color=black,solid, very thick,mark=square]
  table[row sep=crcr]{%
20.547	nan\\
90.405	34.6999508518416\\
107.27	41.5546841975246\\
153.88	48.3056463803247\\
281.94	56.9045938797162\\
664.53	65.9319645021475\\
1926.7	68.2406155205989\\
};
\addlegendentry{WC, Overlap 50\%};

\end{axis}
\end{tikzpicture}%
\end{subfigure}

\begin{subfigure}[b]{0.45\textwidth}
%
%
\definecolor{mycolor1}{rgb}{0.00000,0.44700,0.74100}%
\definecolor{mycolor2}{rgb}{0.85000,0.32500,0.09800}%
\definecolor{mycolor3}{rgb}{0.92900,0.69400,0.12500}%
\definecolor{mycolor4}{rgb}{0.49400,0.18400,0.55600}%
\definecolor{mycolor5}{rgb}{0.46600,0.67400,0.18800}%
\definecolor{mycolor6}{rgb}{0.30100,0.74500,0.93300}%
\definecolor{mycolor7}{rgb}{0.87059,0.49020,0.00000}%
\begin{tikzpicture}

\begin{axis}[%
width=0.951\textwidth,
height=0.804\textwidth,
at={(0\textwidth,0\textwidth)},
scale only axis,
unbounded coords=jump,
xmin=0,
xmax=100,
xlabel={Number of operations divided by $N$},
ymin=25,
ymax=75,
ylabel={pSNR},
axis background/.style={fill=white},
legend style={at={(0.03,0.97)},anchor=north west,legend cell align=left,align=left,draw=white!15!black,font=\tiny}
]
\addplot [color=mycolor1,solid,very thick]
  table[row sep=crcr]{%
1	25.2590229906184\\
5	30.6059947483309\\
9	33.2023392674006\\
13	35.1781841592803\\
17	36.4890275897252\\
21	37.8736170571906\\
25	38.7429160155218\\
29	39.2367671412775\\
33	39.9082231534916\\
37	40.5483412177317\\
41	41.1831761438369\\
45	41.8158589173642\\
49	42.6752387359369\\
53	43.5383449669479\\
57	44.2688959885032\\
61	44.8066628678349\\
65	45.3227382529918\\
69	45.7219864039925\\
73	46.0871094999709\\
77	46.57560337237\\
81	47.0077058017436\\
85	47.4607929170408\\
89	47.9112463722178\\
93	48.329153312811\\
97	48.7172437414546\\
};
\addlegendentry{Thresholded Matrix};

\addplot [color=mycolor2,solid, very thick, mark=x]
  table[row sep=crcr]{%
1	25.7705976128403\\
5	28.7872815018112\\
9	31.0791035093065\\
13	33.2082290498077\\
17	34.9890075781194\\
21	36.5610396322197\\
25	37.8318460068953\\
29	39.1374038570171\\
33	40.3540788746423\\
37	41.4808653534084\\
41	42.4248458666275\\
45	43.3246098079677\\
49	44.2382487875394\\
53	44.9926710608954\\
57	45.6611866420541\\
61	46.3590349885417\\
65	46.9496840661064\\
69	47.5178561852208\\
73	48.099109764803\\
77	48.6200145872472\\
81	49.1159472860806\\
85	49.6754112610005\\
89	50.2887305478608\\
93	50.9003211270541\\
97	51.442330520237\\
};
\addlegendentry{$\bSigma$ $\text{n}^\circ\text{1}$};

\addplot [color=mycolor3,solid, very thick, mark=o]
  table[row sep=crcr]{%
1	28.283939584095\\
5	34.9043858154944\\
9	38.2849566604354\\
13	40.5529232273136\\
17	42.2677721462462\\
21	43.7713797561322\\
25	45.0601985274128\\
29	46.2049756632877\\
33	47.3055786961118\\
37	48.3577250632771\\
41	49.3418306082446\\
45	50.2933957774352\\
49	51.2022019617528\\
53	52.056106654369\\
57	52.8495526676723\\
61	53.602776594359\\
65	54.3024086806561\\
69	54.9942739501409\\
73	55.687794603869\\
77	56.3448668546199\\
81	56.9825888564327\\
85	57.5924033928448\\
89	58.1832215192953\\
93	58.7462810323283\\
97	59.3129174686777\\
};
\addlegendentry{$\bSigma$ $\text{n}^\circ\text{2}$};

\addplot [color=mycolor5,dotted, very thick]
  table[row sep=crcr]{%
1	26.0246405056029\\
5	32.5513042891205\\
9	36.836758538066\\
13	39.9348106473283\\
17	42.5054373654954\\
21	44.6664095616409\\
25	46.7664306016171\\
29	48.6596412992172\\
33	50.3698948617981\\
37	52.1725757465132\\
41	53.866135118423\\
45	55.4729167077813\\
49	57.1902300582184\\
53	58.810995077794\\
57	60.3424945524588\\
61	61.8696528169147\\
65	63.3091786557336\\
69	64.6538722274901\\
73	66.1095470946867\\
77	67.5739221500053\\
81	69.0780090095119\\
85	70.5591986548684\\
89	72.0080998994429\\
93	73.6070926702016\\
97	74.9855591148015\\
};
\addlegendentry{\cite{wei2014fast}};

\addplot [color=mycolor6,dashed, very thick]
  table[row sep=crcr]{%
1	27.1674561892405\\
5	28.0944206490864\\
9	27.73823482208\\
13	27.7787986915469\\
17	27.8357752660031\\
21	28.0484334294239\\
25	28.1486897220396\\
29	28.157801296052\\
33	28.1575074158588\\
37	28.1871721755879\\
41	28.5110800469558\\
45	28.810981630167\\
49	28.8404649436837\\
53	28.9708871987387\\
57	29.2770671966698\\
61	29.3026945731241\\
65	29.663735195741\\
69	29.9556848301555\\
73	30.4336871117045\\
77	30.4455416239947\\
81	30.5817897943087\\
85	30.5947734818524\\
89	30.5809712347743\\
93	30.6144137718161\\
97	30.7109195360293\\
};
\addlegendentry{Algo \ref{algo:thresh2}};

\addplot [color=black,solid, very thick,mark=square]
  table[row sep=crcr]{%
22.601	nan\\
90.405	27.6999508518416\\
107.27	29.5546841975246\\
153.88	41.3056463803247\\
281.94	49.9045938797162\\
664.53	65.9319645021475\\
1926.7	68.2406155205989\\
};
\addlegendentry{WC, Overlap 50\%};

\end{axis}
\end{tikzpicture}%
\end{subfigure}
\caption{pSNR of the blurred image using the approximated operators $\wtilde \bH \bu$ with respect to the blurred image using the exact operator $\bH \bu$. 
The results have been obtained with blur Figure \ref{fig:kernel1} for top-left graph, blur Figure \ref{fig:kernelRotation} for top-right graph and blur Figure \ref{fig:kernel_rotation_skew} for the bottom. pSNR are averaged over the set of 16 images.} \label{fig:pSNR_Direct}
\end{figure}
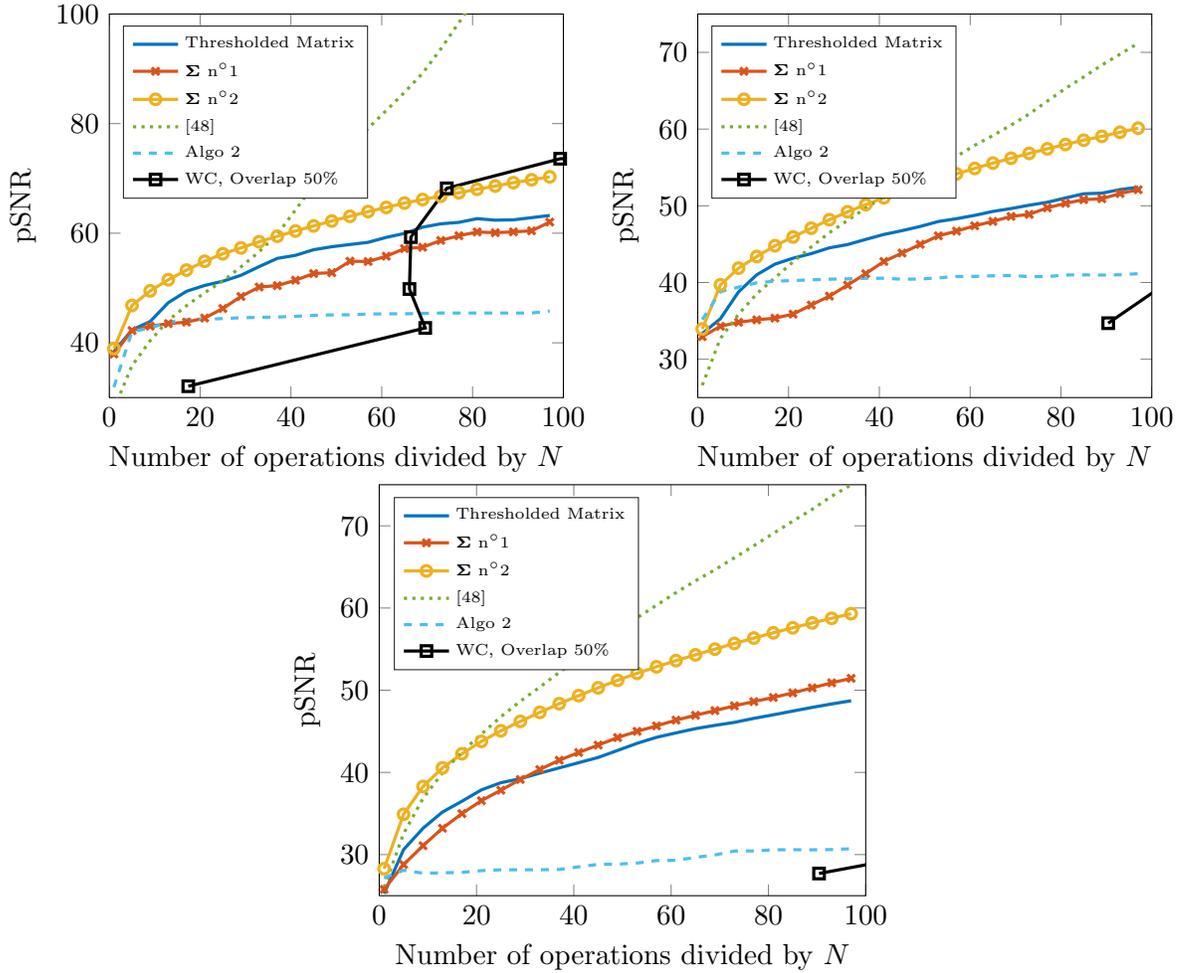

\begin{figure}[htbp]
	\centering
	\begin{tabular}{l|c c || c c |l}
	&  Piece. Conv. & Difference & Algorithm \ref{algo:thresh} & Difference & $l=$\\ \hline
	$2 \times 2$ &  31.90 dB & & 36.66 dB & & 5\\
	 1.21 sec & \includegraphics[width=0.17\textwidth]{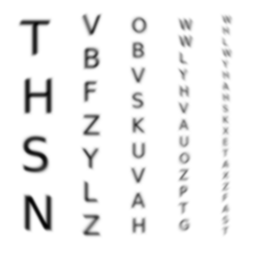}  & \includegraphics[width=0.17\textwidth]{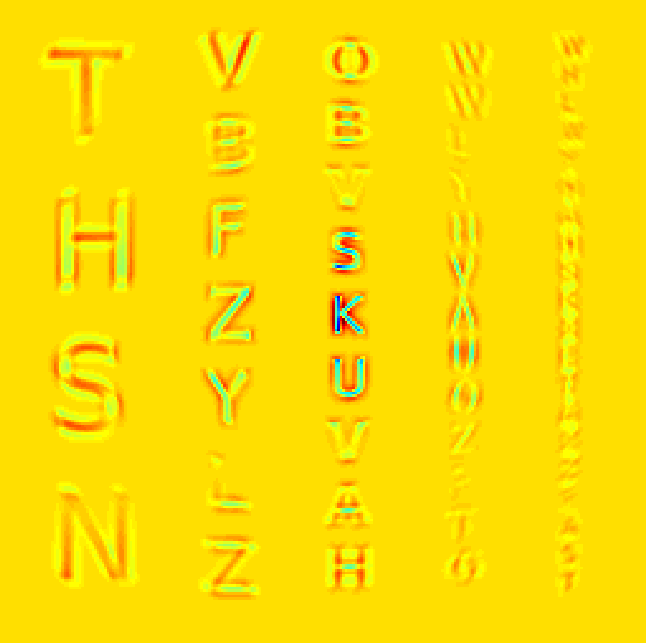} &  \includegraphics[width=0.17\textwidth]{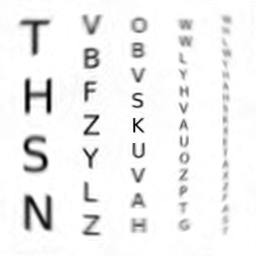} & \includegraphics[width=0.17\textwidth]{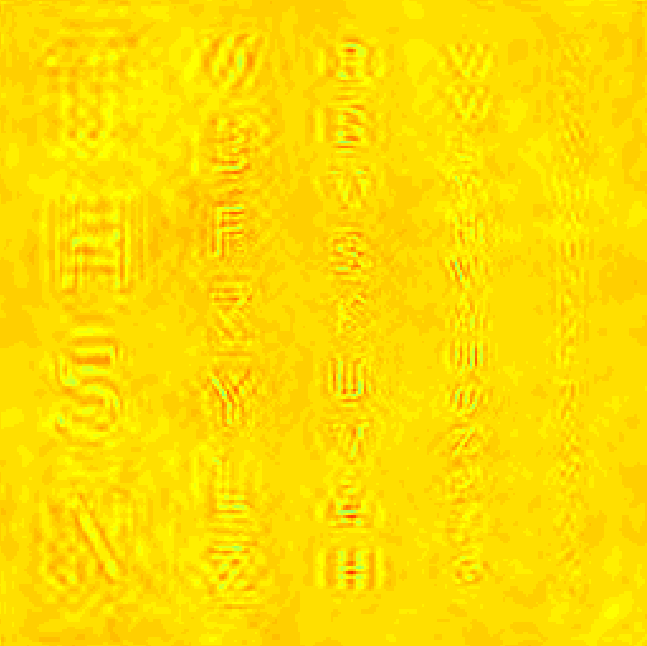} & 0.039 sec \\ \hline 
	$4 \times 4$ &  38.49 dB & & 45.87 dB &  & 30 \\
	 0.17 sec & \includegraphics[width=0.17\textwidth]{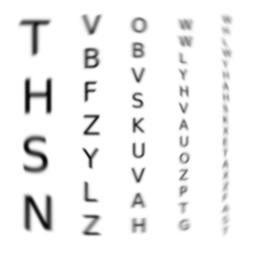} & \includegraphics[width=0.17\textwidth]{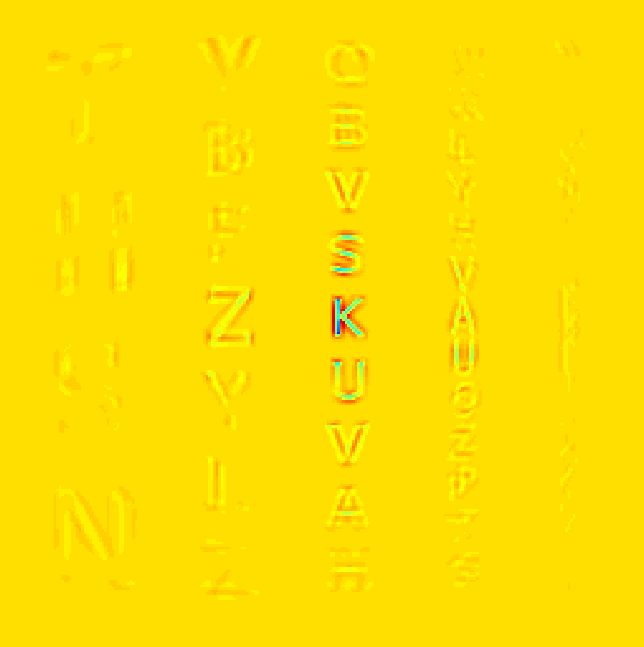} & \includegraphics[width=0.17\textwidth]{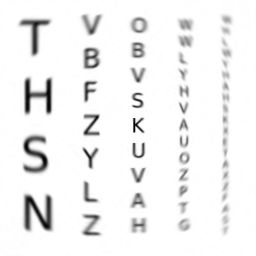} & \includegraphics[width=0.17\textwidth]{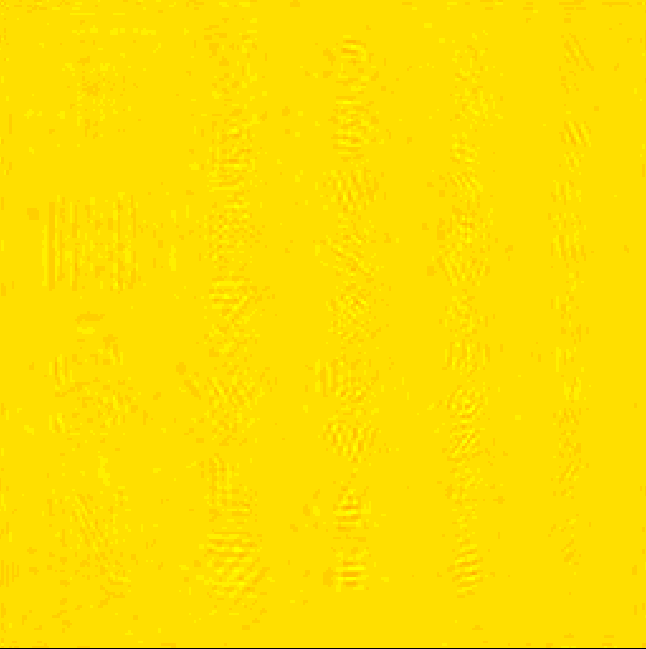} & 0.040 sec \\ \hline 
	$8 \times 8$ &  44.51 dB & & 50.26 dB & & 50\\
		0.36 sec & \includegraphics[width=0.17\textwidth]{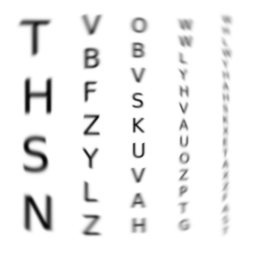} & \includegraphics[width=0.17\textwidth]{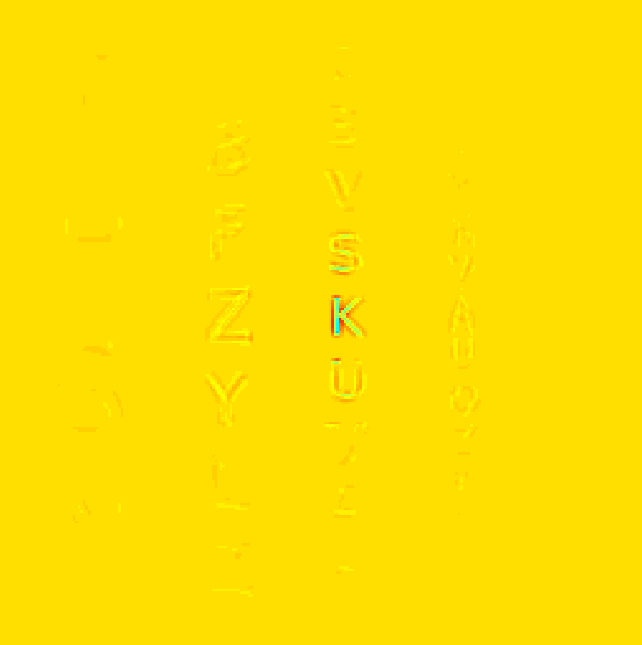} & \includegraphics[width=0.17\textwidth]{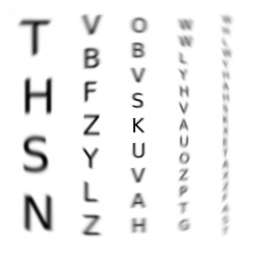} & \includegraphics[width=0.17\textwidth]{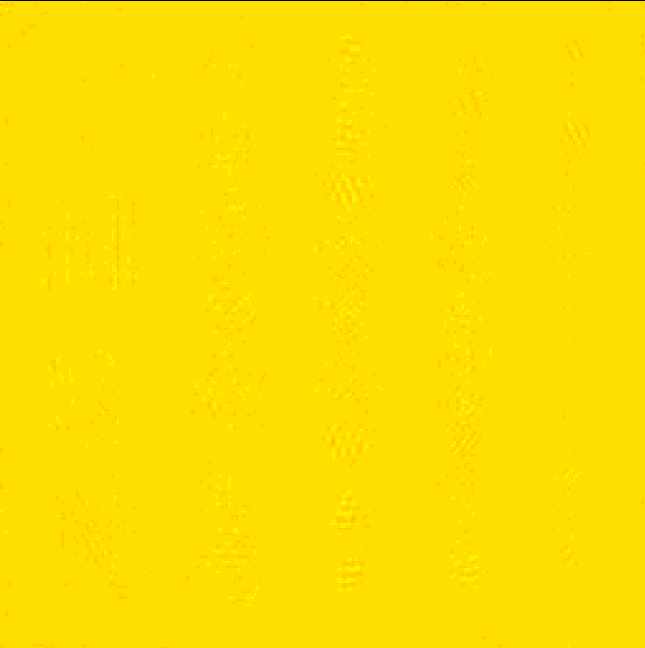} & 0.048 sec \\ \hline 
		$16 \times 16$ &  53.75 dB & & 57.79 dB & & 100\\
		0.39 sec & \includegraphics[width=0.17\textwidth]{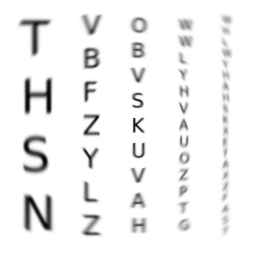} & \includegraphics[width=0.17\textwidth]{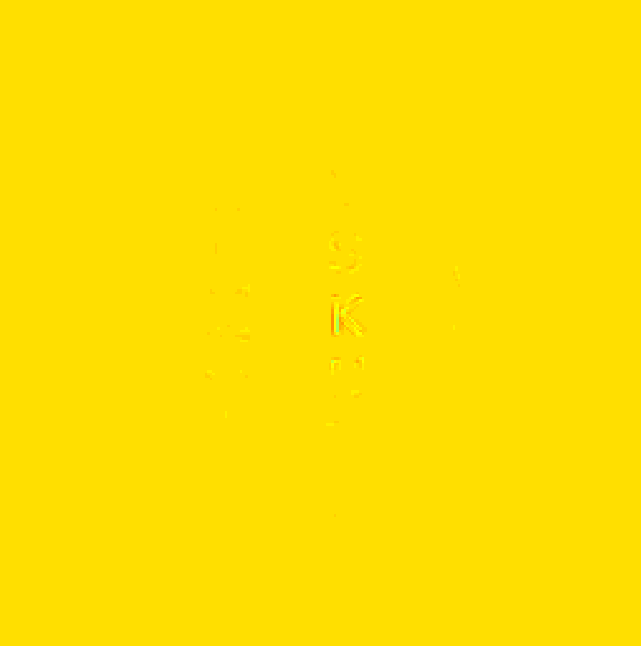} & \includegraphics[width=0.17\textwidth]{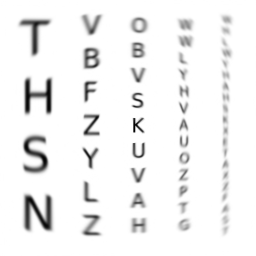} & \includegraphics[width=0.17\textwidth]{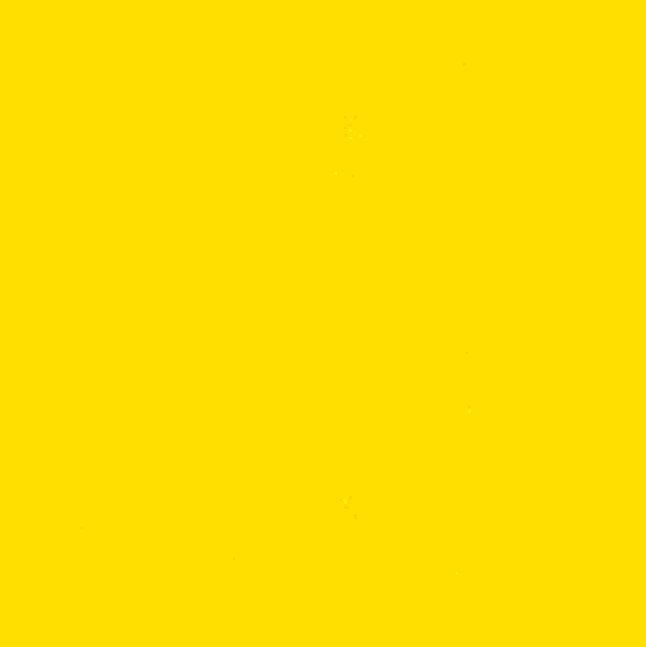} & 0.058 sec \\ \hline 
	\end{tabular}
	\caption{Blurred images and the differences $\bH \bu - \wtilde \bH \bu$ for the kernel Figure \ref{fig:kernelRotation}. Results on the left are obtained using windowed convolutions approximations with $2 \times 2$, $4 \times 4$, $8 \times 8$ and $16 \times 16$ partitionings all with 50\% overlap. Results on the right are obtained using Algorithm \ref{algo:thresh} with the second $\bSigma = \textrm{diag}(2^{j(i)})_i$ matrix keeping $K = l N$ coefficients. The pSNR and the time needed for the computation for the matrix-vector product are shown. }
	\label{fig:comparisonPC_WB}
\end{figure}

\FloatBarrier

\subsection{Application to inverse problems}\label{sec:Inverse_Problems}

In this experiment we 
compare the methods efficiency in deblurring problems. We assume the following classical image degradation model
\begin{equation}
	\bv = \bH \bu + \bbeta, \quad \bbeta \sim \mathcal{N}\left(0, \sigma^2 \textrm{Id}\right),
\end{equation}
where $\bv$ is the degraded image observed, $\bu$ is the image to restore, $\bH$ in the blurring operator and $\sigma^2$ is the noise variance. A standard TV-L2 optimization problem is solved to restore the image $\bu$:
\begin{equation}
	\text{Find } \bu^* \in \argmin_{\bu \in \R^{N}, \norm{ \wtilde \bH \bu -\bv }_2^2 \leq \alpha} TV(\bu),
\end{equation}
where $\wtilde \bH$ is an approximating operator and $TV$ is the isotropic total variation of $\bu$. The optimization problem is solved using the primal-dual algorithm proposed in \cite{chambolle2011first}. We do not detail the resolution method since it is now well documented in the literature.

An important remark is that the interest of the total variation term is not only to regularize the ill-posed inverse problem, but also to handle the errors in the operator approximation. In practice we found that setting $\alpha = (1 + \epsilon) \sigma^2 N$ where $\epsilon > 0$ is a small parameter provides good experimental results. 

In Figures \ref{fig:deconv_letters_fKernelRotation_0} and \ref{fig:deconv_letters_fKernelRotation_2e-2}, we present deblurring results using Figure \ref{im:letters} with kernel \ref{fig:kernelRotation}. 

In both the noisy and noiseless cases, the $4 \times 4$ windowed convolutions method performs worst reconstructions than wavelet approaches with $30N$. Moreover, they are between 4 and 6 times significantly slowlier. Surprisingly even the implementation in the space domain is faster. The reason for that is probably a difference in the quality of implementation: we use Matlab sparse matrix-vector products for space and wavelet methods. This routine is cautiously optimized while our c implementation of windowed convolutions can probably be improved. 
In addition, let us mention that two wavelet transforms need to be computed at each iteration with the wavelet based methods, while this is not necessary with the space implementation. It is likely that the acceleration factor would have been significantly higher if wavelet based regularizations had been used.

In the noiseless case, the simple thresholding approach provides significantlty better SNRs than the more advanced proposed in this paper and in \cite{wei2014fast}. Note however that it produces more significant visual artefacts. 
This result might come as a surprise at first sight. 
However, as was explained in section \ref{sec:algorithms}, our aim to design sparsity patterns was to minimize an operator norm $\|\bH - \wtilde \bH\|_{X\to 2}$. When dealing with an inverse problem, approximating the direct operator is not as relevant as approximating its inverse. 
This calls for new methods specific to inverse problems.

In the noisy case, all three thresholding strategies produce results of a similar quality. The Haar wavelet transform is however about twice faster since the Haar wavelet support is smaller. Moreover, the results obtained with the approximated matrices are nearly as good as the ones with the true operator. 
It suggests that it is not necessary to construct accurate approximations of the operators in practical problems. This observation is also supported by the experiment in Figure \ref{fig:deconv_pSNR_vs_ops}. In this experiment, we plot the pSNR of the deblurred image in presence of noise with respect to the number of elements in $\bTheta_K$. Interestingly, a matrix containing only $20 N$ coefficients leads to deblurred images close to the results obtained with the exact operator. In this experiment, a total of $K=5N$ coefficients in $\bTheta_K$ is enough to retrieve satisfactory results. This is a very encouraging result for blind deblurring problems.


\begin{figure}[htbp]
\centering
\begin{subfigure}[b]{0.40\textwidth} \centering
\includegraphics[width=0.8\textwidth]{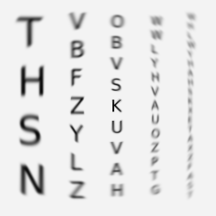} 
\caption{Degraded image \\ 21.85dB }
\end{subfigure}
\begin{subfigure}[b]{0.40\textwidth} \centering
\includegraphics[width=0.8\textwidth]{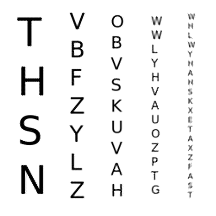} 
\caption{Exact operator \\ 34.53dB -- 64.87 sec}
\end{subfigure}

\begin{subfigure}[b]{0.40\textwidth} \centering
\includegraphics[width=0.8\textwidth]{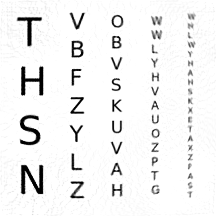} 
\caption{Simple thresh\\ 31.68dB -- 21.68 sec}
\end{subfigure}
\begin{subfigure}[b]{0.40\textwidth} \centering
\includegraphics[width=0.8\textwidth]{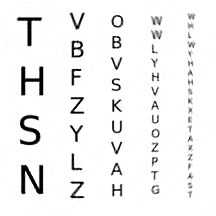} 
\caption{Algorithm \ref{algo:thresh} \\ 30.57dB -- 21.16 sec}
\end{subfigure}

\begin{subfigure}[b]{0.40\textwidth} \centering
\includegraphics[width=0.8\textwidth]{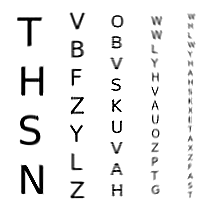} 
\caption{WC $4\times4$  \\ 28.37dB -- 85.60 sec}
\end{subfigure}
\begin{subfigure}[b]{0.40\textwidth} \centering
\includegraphics[width=0.8\textwidth]{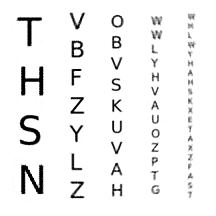} 
\caption{ \cite{wei2014fast} \\ 30.53dB -- 14.12 sec}
\end{subfigure}
\caption{\small Deblurring results for kernel Figure \ref{fig:kernelRotation} and without noise. Top-left: degraded image. Top-right: deblurred using the exact operator. Middle-left: deblurred by the wavelet based method and a simple thresholding. Middle-right: deblurred by the wavelet based method and Algorithm \ref{algo:thresh2} with the second $\bSigma = \textrm{diag}(2^{j(i)})_i$ matrix. Bottom: deblurred using a $4 \times 4$ windowed convolutions algorithm with 50\% overlap. For wavelet methods $K = 30 N$ coefficients are kept in matrices. pSNR are displayed for each restoration.}\label{fig:deconv_letters_fKernelRotation_0}
\end{figure}

\begin{figure}[htbp]
\centering
\begin{subfigure}[b]{0.40\textwidth} \centering
\includegraphics[width=0.8\textwidth]{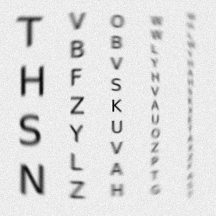} 
\caption{Degraded image \\ 21.62dB }
\end{subfigure}
\begin{subfigure}[b]{0.40\textwidth} \centering
\includegraphics[width=0.8\textwidth]{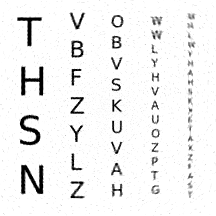} 
\caption{Exact operator \\ 29.09dB -- 64.87 sec}
\end{subfigure}

\begin{subfigure}[b]{0.40\textwidth} \centering
\includegraphics[width=0.8\textwidth]{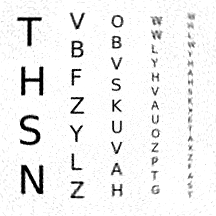} 
\caption{Simple thresh \\ 28.64dB -- 21.68 sec}
\end{subfigure}
\begin{subfigure}[b]{0.40\textwidth} \centering
\includegraphics[width=0.8\textwidth]{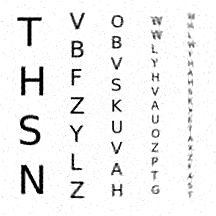} 
\caption{Algorithm \ref{algo:thresh} \\ 28.24dB -- 21.16 sec}
\end{subfigure}

\begin{subfigure}[b]{0.40\textwidth} \centering
\includegraphics[width=0.8\textwidth]{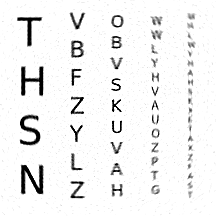} 
\caption{WC $4\times4$  \\ 27.62dB -- 85.60 sec}
\end{subfigure}
\begin{subfigure}[b]{0.40\textwidth} \centering
\includegraphics[width=0.8\textwidth]{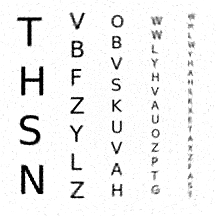} 
\caption{\cite{wei2014fast} \\ 28.37dB -- 14.12 sec}
\end{subfigure}
\caption{\small Deblurring results for kernel Figure \ref{fig:kernelRotation} and with $\sigma=0.02$ noise. Top-left: degraded image. Top-right: deblurred using the exact operator. Middle-left: deblurred by the wavelet based method and a simple thresholding. Middle-right: deblurred by the wavelet based method and Algorithm \ref{algo:thresh2} with the second $\bSigma = \textrm{diag}(2^{j(i)})_i$ matrix. Bottom: deblurred using a $4 \times 4$ windowed convolutions algorithm with 50\% overlap. For wavelet methods $K = 30 N$ coefficients are kept in matrices. pSNR are displayed for each restoration.} \label{fig:deconv_letters_fKernelRotation_2e-2}
\end{figure}


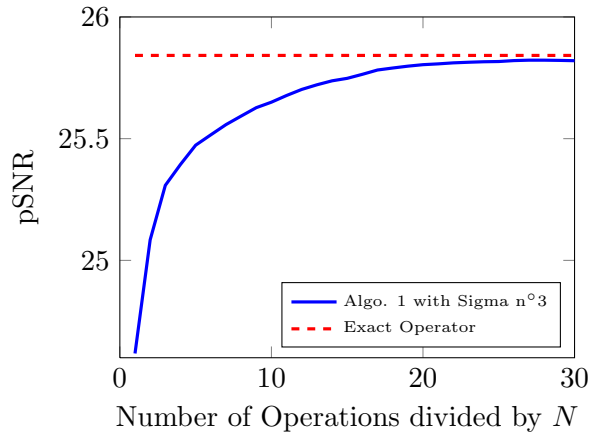
\begin{figure}[htbp]
\centering
%
%
\begin{tikzpicture}

\begin{axis}[%
width=0.4\textwidth,
height=0.3\textwidth,
at={(0\textwidth,0\textwidth)},
scale only axis,
xmin=0,
xmax=30,
xlabel={Number of Operations divided by $N$},
ymin=24.6,
ymax=26,
ylabel={pSNR},
axis background/.style={fill=white},
legend style={at={(0.97,0.03)},anchor=south east,legend cell align=left,align=left,draw=black,font=\tiny}
]
\addplot [color=blue,solid,very thick]
  table[row sep=crcr]{%
1	24.6175179347433\\
2	25.0842700502701\\
3	25.3081851009426\\
4	25.3940688001741\\
5	25.4731224473305\\
6	25.5156837916106\\
7	25.5576399947774\\
8	25.5927081987149\\
9	25.6272981256923\\
10	25.6499033289016\\
11	25.6773527818184\\
12	25.7022653370157\\
13	25.7212633960335\\
14	25.7376198560422\\
15	25.7478177834637\\
16	25.7643343966873\\
17	25.7820346038751\\
18	25.7902349043193\\
19	25.7978454257447\\
20	25.803953594932\\
21	25.8072610864368\\
22	25.8114281736139\\
23	25.8139252933415\\
24	25.8159497472874\\
25	25.8169092712485\\
26	25.8207456702191\\
27	25.8224238107898\\
28	25.8224249885689\\
29	25.8215690430367\\
30	25.8202731630227\\
};
\addlegendentry{$\text{Algo. 1 with Sigma n}^\circ\text{3}$};

\addplot [color=red,dashed,very thick]
  table[row sep=crcr]{%
1	25.8419682708757\\
30	25.8419682708757\\
};
\addlegendentry{Exact Operator};

\end{axis}
\end{tikzpicture}%
\caption{pSNR of the deblurred image with respect to the number of coefficients in the matrix divided by $N$ for the image Figure \ref{im:mandrill} and the kernel Figure \ref{fig:kernel1}. The matrix is constructed using Algorithm \ref{algo:thresh} with the second $\bSigma = \textrm{diag}(2^{j(i)})_i$ matrix with $K = l N$ coefficients for $l$ from $1$ to $30$. Deblurred imaged using these matrices are compared with the one obtained with the exact operator.} \label{fig:deconv_pSNR_vs_ops}
\end{figure}

\section{Conclusion}

\subsection{Brief summary}
In this paper, we introduced an original method to represent spatially varying blur operators in the wavelet domain. 
We showed that this new technique has a great adaptivity to the smoothness of the operator and exhibit an $\mathcal{O}(N \epsilon^{-d/M})$ complexity, where $M$ denotes the kernel regularity. 
This method is versatile since it is possible to adapt it to the kind of images that have to be treated. 
We showed that much better performance to approximate the direct operator can be obtained by leveraging the fact that natural signals exhibit some structure in the wavelet domain. 
Moreover, we proposed a original method to design sparsity patterns for class of blurring operators when only the operator regularity is known.
These theoretical results were confirmed by practical experiments on real images. 
Even though our conclusions are still preliminary since we tested only small $256\times 256$ images, the wavelet based methods seem to significantly outperform standard windowed convolutions based approaches. 
Moreover, they seem to provide satisfactory deblurring results on practical problems with a complexity no greater than $5N$ operations, where $N$ denotes the pixels number.

\subsection{Outlook}

We provided a simple complexity analysis based solely on the \textit{global} regularity of the kernel function. It is well known that wavelets are able to adapt locally to the structures of images or operators \cite{cohen2002adaptive}. 
The method should thus provide an efficient tool for piecewise regular blurs appearing in computer vision for instance. 
It could be interesting to evaluate precisely the complexity of wavelet based approximations for piecewise regular blurs.

A key problem of the wavelet based approach is the need to project the operator on a wavelet basis. 
In this paper we performed this operation using the computationally intensive Algorithm \ref{algo:btheta}. 
It could be interesting to derive fast projection methods.
Let us note that such methods already exist in the literature \cite{BCR}. 
A similar procedure was used in the specific context of spatially varying blur in \cite{wei2014fast}. 

Moreover, the proposed method can already be applied to situations where the blur mostly depends on the instrument: the wavelet representation has to be computed once for all off-line, and then all deblurring operations can be handled much faster.
This situation occurs in satellite imaging or for some fluorescence microscopes (see e.g. \cite{hajlaoui2010satellite,temerinac2012multiview,maalouf2011fluorescence}).

The design of good sparsity patterns is an open and promising research avenue. In particular, designing patterns adapted to specific inverse problems could have some impact as was illustrated in section \ref{sec:Inverse_Problems}.

Another exciting research perspective is the problem of blind deconvolution. 
Expressing the unknown operator as a sparse matrix in the wavelet domain is a good way to improve the problem identifiability.
This is however far from being sufficient since the blind deconvolution problem has far more unknowns (a full operator and an image) than data (a single image).  
Further assumptions should thus be made on the wavelet coefficients regularity, and we plan to study this problem in a forthcoming work. 

Finally let us mention that we observed some artefacts when using the wavelet based methods with high sparsity levels. 
This is probably due to their non translation and rotation invariance. 
It could be interesting to study sparse approximations in redundant wavelet bases or other time-frequency bases.
It was shown for instance in \cite{candes2003curvelets} that curvelets are near optimal to represent Fourier integral operators.
Similarly, Gabor frames are known to be very efficient to describe smoothly varying integral operators in the 1D setting \cite{hrycak2011practical}.
\section*{Acknowledgements}

The authors thank Fran\c{c}ois Malgouyres and Mathieu Bouyrie for stimulating discussions in the early preparation of this work.
They thank J\'er\'emie Bigot and Guillermo Cabrera for providing the SDSS database address as an illustration of a deblurring problem with a spatially variant PSF.
They thank J\'er\^ome Fehrenbach and the IP3D team of ITAV for their comments and support.
They thank Sandrine Anthoine, Caroline Chaux, Hans Feichtinger and Clothilde M\'elot for their comments on the manuscript.
They thank Michael Hirsch for providing the pictures in Figure \ref{fig:dwarves_motion} and indicating reference \cite{hirsch2010efficient}.
They thank Lo\"ic Denis and F\'err\'eol Soulez for indicating references \cite{denis2014fast,wei2014fast}.
This work was supported by ANR SPH-IM-3D (ANR-12-BSV5-0008). 
Paul Escande is pursuing a PhD degree supported by the MODIM project funded by the PRES of Toulouse University and Midi-Pyr\'en\'ees region. 

\bibliographystyle{abbrv}
\bibliography{biblio}

\appendix
\section{Proof of Lemma \ref{lem:decay}} \label{appendix:proof_decay}

We let $\Pi_M$ denote the set of polynomials of degree less or equal to $M$.

Lemma \ref{lemma:poly_approx} below is a common result in numerical analysis \cite{deny1954espaces} (see also Theorem 3.2.1 in \cite{cohen2003numerical}). 
It ensures that the approximation error of a function by a polynomial of degree $M$ is bounded by the Sobolev semi-norm $W^{M,p}$.

\begin{lemma}[Polynomial approximation] \label{lemma:poly_approx}
For $1 \leq p \leq +\infty$, $M \in \N^*$ and $\Omega \subset \R^d$ a bounded domain, the following bound holds
	\begin{equation}
		\inf_{g \in \Pi_M}\norm{f - g}_{\LL^p(\Omega)} \leq C \abs{f}_{W^{M+1,p}(\Omega)},
	\end{equation}		
	where $C$ is a constant that depends on $d,M, p$ and $\Omega$ only.

	Moreover, if $I_h \subset \Omega \subset \R^d$ is a cube of sidelength $h$, the following estimate holds
	\begin{equation}
		\inf_{g \in \Pi_M}\norm{f - g}_{\LL^p(I_h)} \leq C h^{M+1} \abs{f}_{W^{M+1,p}(I_h)},
	\end{equation}		
	where $C$ is a constant only depending on $d,M, p$ and $\Omega$.
\end{lemma}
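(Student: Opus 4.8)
The plan is to establish this estimate, which is the classical Bramble--Hilbert (or Deny--Lions) lemma, in two stages: first prove the bound on a fixed reference domain by a compactness argument, then recover the $h$-dependent version on a cube by homogeneity (scaling).

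\textbf{Stage 1: the estimate on a fixed domain.} I would work with $\Omega$ bounded, connected and with a boundary regular enough (Lipschitz, which covers the cube) for the Rellich--Kondrachov embedding $W^{M+1,p}(\Omega)\hookrightarrow W^{M,p}(\Omega)$ to be compact. Fix once and for all a continuous linear projection $P:\LL^p(\Omega)\to\Pi_M$ acting as the identity on $\Pi_M$ (one can build it from a biorthogonal system $\int q_i\varphi_j=\delta_{ij}$, with $(q_i)$ a basis of $\Pi_M$ and $\varphi_j\in C^\infty_c(\Omega)$); since $\Pi_M$ is finite dimensional, $P$ is then also continuous from $W^{M+1,p}(\Omega)$ into $W^{M+1,p}(\Omega)$. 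The core claim is
\[
\norm{f-Pf}_{W^{M+1,p}(\Omega)}\leq C\,\abs{f}_{W^{M+1,p}(\Omega)}\qquad\text{for all }f\in W^{M+1,p}(\Omega),
\]
with $C=C(d,M,p,\Omega)$. I would prove it by contradiction: if it fails there are $f_n$ with $\norm{f_n-Pf_n}_{W^{M+1,p}}=1$ and $\abs{f_n}_{W^{M+1,p}}\to0$; replacing $f_n$ by $f_n-Pf_n$ (which changes neither quantity, since $P(f_n-Pf_n)=0$) we may assume $Pf_n=0$ and $\norm{f_n}_{W^{M+1,p}}=1$. Being bounded in $W^{M+1,p}$, a subsequence converges in $W^{M,p}$ by compactness; since moreover every derivative of order $M+1$ tends to $0$ in $\LL^p$, that subsequence is Cauchy in $W^{M+1,p}$ and converges there to some $f$ with $\abs{f}_{W^{M+1,p}}=0$. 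Connectedness of $\Omega$ forces $f\in\Pi_M$, hence $Pf=f$; but $P$ is continuous and $Pf_n=0$, so $f=0$, contradicting $\norm{f}_{W^{M+1,p}}=\lim_n\norm{f_n}_{W^{M+1,p}}=1$. The first displayed inequality of the lemma then follows from
\[
\inf_{g\in\Pi_M}\norm{f-g}_{\LL^p(\Omega)}\leq\norm{f-Pf}_{\LL^p(\Omega)}\leq\norm{f-Pf}_{W^{M+1,p}(\Omega)}\leq C\,\abs{f}_{W^{M+1,p}(\Omega)}.
\]

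\textbf{Stage 2: scaling onto a cube $I_h$.} Apply Stage~1 on the unit cube $Q=[0,1]^d$, obtaining a constant $C_Q=C_Q(d,M,p)$. Given $f$ on a cube $I_h$ of sidelength $h$ with corner $x_0$, set $\widehat f(\widehat x)=f(x_0+h\widehat x)$ for $\widehat x\in Q$. The change of variables $x=x_0+h\widehat x$ gives $\norm{\widehat f}_{\LL^p(Q)}=h^{-d/p}\norm{f}_{\LL^p(I_h)}$, and since $\p^{\alpha}\widehat f(\widehat x)=h^{\abs{\alpha}}(\p^{\alpha}f)(x_0+h\widehat x)$ it also gives $\abs{\widehat f}_{W^{M+1,p}(Q)}=h^{M+1-d/p}\abs{f}_{W^{M+1,p}(I_h)}$; moreover $g\mapsto g(x_0+h\,\cdot\,)$ is a bijection of $\Pi_M$ onto $\Pi_M$. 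Hence
\[
h^{-d/p}\inf_{g\in\Pi_M}\norm{f-g}_{\LL^p(I_h)}=\inf_{\widehat g\in\Pi_M}\norm{\widehat f-\widehat g}_{\LL^p(Q)}\leq C_Q\,\abs{\widehat f}_{W^{M+1,p}(Q)}=C_Q\,h^{M+1-d/p}\abs{f}_{W^{M+1,p}(I_h)},
\]
and multiplying through by $h^{d/p}$ yields the second inequality, with a constant depending only on $d,M,p$ (a fortiori only on $d,M,p$ and $\Omega$).

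\textbf{Main obstacle.} Everything nontrivial sits in Stage~1: the normal-family argument relies on the compact Sobolev embedding (so $\Omega$ must be mildly regular and connected) and on identifying a function with vanishing $W^{M+1,p}$-seminorm as a polynomial. Stage~2 is pure bookkeeping. Since in this paper the lemma is only ever applied to the cubes $I_\lambda$ supporting the wavelets, the regularity hypotheses needed in Stage~1 are automatically met, so these subtleties cause no real difficulty here.
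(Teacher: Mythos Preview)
Your proof is correct and follows the classical Deny--Lions/Bramble--Hilbert route: a Peetre--Tartar compactness argument on a fixed Lipschitz domain to kill the seminorm kernel $\Pi_M$, followed by an affine rescaling to pull out the factor $h^{M+1}$ on cubes. Both stages are carried out properly; in particular, the reduction to $Pf_n=0$, the use of Rellich--Kondrachov to upgrade $W^{M,p}$-convergence plus $\abs{\cdot}_{W^{M+1,p}}\to 0$ into full $W^{M+1,p}$-convergence, and the identification of the limit as a polynomial on a connected domain are all standard and sound. The scaling bookkeeping in Stage~2 is also correct.

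The paper, however, does not give its own proof of this lemma: it simply states it as a known result and cites Deny--Lions \cite{deny1954espaces} and Cohen \cite[Theorem~3.2.1]{cohen2003numerical}. So there is no ``paper proof'' to compare against; you have supplied precisely the argument those references contain, which is more than the paper itself does. Since the lemma is only ever applied to the wavelet support cubes $I_\lambda$, your remark that the needed regularity and connectedness hypotheses are trivially satisfied in this context is apt.
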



Let $I_{\lambda}=\supp(\psi_\lambda)$.
From the wavelets definition, we get
		\[
			I_\lambda = 2^{-j}(m + [-c(M)/2,c(M)/2]^d)
		\]
	therefore $\abs{I_{\lambda}} = c(M)^d \cdot 2^{-jd}$.
We will now prove Lemma \ref{lem:decay}.

\begin{proof}[Proof of Lemma \ref{lem:decay}]
Since the mapping $(x,y)\mapsto K(x,y) \psi_{\lambda}(y) \psi_{\mu}(x)$ is bounded, it is also absolutely integrable on compact domains. 
Therefore $\dotproduct{H \psi_{\lambda}}{\psi_{\mu}}$ is well-defined for all $(\lambda, \mu)$. Recall that $\lambda = (j,m,e) \in \Lambda$ and $\mu = (k,n,e') \in \Lambda$.
Moreover Fubini's theorem can be applied and we get
\begin{align*}
	\dotproduct{H\psi_{\lambda}}{\psi_{\mu}} &= \int_{I_{\mu}} \int_{I_{\lambda}} K(x,y) \psi_{\lambda}(y) \psi_{\mu}(x) dy dx \\
	& = \int_{I_\lambda} \int_{I_{\mu}} K(x,y) \psi_{\lambda}(y) \psi_{\lambda}(x) dx dy.
\end{align*}

To prove the result, we distinguish the cases $j \leq k$ and $ j > k$. 
In this proof, we focus on the case $j\leq k$. 
The other one can be obtained by symmetry, using the facts that $\dotproduct{H \psi_{\lambda}}{\psi_{\mu}}=\dotproduct{\psi_{\lambda}}{H^*\psi_{\mu}}$ and that $H$ and $H^*$ are both blurring operators in the same class.

To exploit the regularity of $K$ and $\psi$, note that for all $g \in \Pi_{M-1}$, $\displaystyle \int_{I_\mu} g(x) \psi_{\mu}(x) dx=0$ since $\psi$ has $M$ vanishing moments. 
Therefore,
\begin{align*}
  \dotproduct{H \psi_{\lambda}}{\psi_{\mu}} &= \int_{I_{\lambda}} \inf_{g \in \Pi_{M-1}} \int_{I_{\mu}} \left( K(x,y) - g(x) \right) \psi_{\lambda}(y)  \psi_{\mu}(x) dx dy,
\end{align*}
and
\begin{align*}
  \abs{\dotproduct{H \psi_{\lambda}}{\psi_{\mu}}} & \leq \int_{I_{\lambda}} \inf_{g \in \Pi_{M-1}} \int_{I_{\mu}} \abs{ K(x,y) - g(x) }  \abs{\psi_{\lambda}(y)} \abs{ \psi_{\mu}(x)} dx dy \\
  & \leq \int_{I_{\lambda}} \inf_{g \in \Pi_{M-1}} \norm{ K(\cdot,y) - g}_{\LL^{\infty}(I_{\mu}) }  \norm{\psi_{\mu}}_{\LL^1(I_{\mu})} \abs{\psi_{\lambda} (y)} dy.
\end{align*}
By Lemma \ref{lemma:poly_approx}, $\displaystyle \inf_{g \in \Pi_{M-1}} \norm{ K(\cdot,y) - g}_{\LL^{\infty}(I_{\mu}) } \lesssim 2^{-kM}  \left| K(\cdot,y) \right|_{W^{M,\infty}(I_{\mu})}$ since $I_{\mu}$ is a cube of sidelength $c(M)\cdot 2^{-k}$. We thus obtain 
\begin{align*}
\abs{\dotproduct{H \psi_{\lambda}}{\psi_{\mu}}} & \lesssim 2^{-kM} \norm{\psi_{\mu}}_{\LL^1(I_{\mu})} \norm{\psi_{\lambda}}_{\LL^1(I_{\lambda})} \esssup_{y \in I_{j,m}} \left| K(\cdot,y) \right|_{W^{M,\infty}(I_{\mu})} \nonumber \\
&\lesssim 2^{-kM} 2^{-\frac{dj}{2}} 2^{-\frac{dk}{2}} \esssup_{y \in I_{\lambda}} \left| K(\cdot,y) \right|_{W^{M,\infty}(I_{\mu})} 
\end{align*}
since $\norm{\psi_{\lambda}}_{\LL^1} = 2^{-\frac{dj}{2}} \norm{\psi}_{\LL^1}$.

Since $H \in \mathcal{A}(M,f)$
\begin{align*}
	\esssup_{y \in I_{\lambda}} \left| K(\cdot,y) \right|_{W^{M,\infty}(I_{\mu})} &= \esssup_{y \in  I_{\lambda} } \sum_{\abs{\alpha} = M} \esssup_{x \in I_{\mu}} \abs{\p^{\alpha}_x K(x,y)} \\
	&\leq \sum_{\abs{\alpha} = M} \esssup_{(x,y) \in I_{\lambda} \times I_{\mu} } f\left(\norm{x-y}_\infty\right) \\
	&\lesssim \esssup_{(x,y) \in I_{\lambda} \times I_{\mu} } f\left(\norm{x-y}_\infty\right).
\end{align*}
Because $f$ is a non-increasing function, $f\left( \norm{x-y}_\infty\right) \leq f\left( \dist{ I_{\lambda} }{ I_{\mu}} \right)$ since $ \displaystyle \dist{I_{\lambda}}{I_{\mu}} = \inf_{(x,y) \in I_{\lambda} \times I_{\mu}} \norm{x-y}_\infty$. Therefore
\begin{align*}
	\abs{\dotproduct{H \psi_{\lambda}}{\psi_{\mu}}}& \lesssim 2^{-kM} 2^{-\frac{dj}{2}} 2^{-\frac{dk}{2}} f\left( \dist{ I_{\lambda} }{ I_{\mu}} \right) \\
	& = 2^{-(M+\frac{d}{2})\abs{j-k}} 2^{-j(M+d)} f\left( \dist{ I_{\lambda} }{ I_{\mu}} \right).
\end{align*}

The case $k < j$ gives
\begin{align*}
	\abs{\dotproduct{H \psi_{\lambda}}{\psi_{\mu}}} \lesssim 2^{-(M+\frac{d}{2})\abs{j-k}} 2^{-k(M+d)} f\left( \dist{ I_{\lambda} }{ I_{\mu}} \right),
\end{align*}
which allows to conclude that
\begin{align*}
	\abs{\dotproduct{H \psi_{\lambda}}{\psi_{\mu}}} & \lesssim 2^{-(M+\frac{d}{2})\abs{j-k}} 2^{-\min(j,k)(M+d)} f\left( \dist{ I_{\lambda} }{ I_{\mu}} \right),
\end{align*}

\end{proof}
\section{Proof of Theorem \ref{thm:proof_thresh}} \label{appendix:proof_thresh_d_dim}

Let us begin with some preliminary results. Recall that $\lambda = (j,m,e) \in \Lambda$ and $\mu = (k,n,e') \in \Lambda$. Since $f$ is compactly supported on $[0,\kappa]$ and bounded by $c_f$, we have $f_{\lambda,\mu} = f\left( \dist{I_{\lambda}}{I_{\mu}} \right) \leq c_f \indic_{\dist{I_{\lambda}}{I_{\mu}} \leq \kappa}$. By equation \eqref{eq:defdist}, 
$\dist{I_{\mu}}{I_{\lambda}} \leq \kappa$ if $\|2^{-j}m-2^{-k}n\|_\infty\leq R_{j,k}^{\kappa}$, where $R_{j,k}^{\kappa} = (2^{-j} + 2^{-k})c(M)/2 + \kappa$.

\begin{lemma}
Define 
\[
	\mathcal{G}_{j,k}^{e,e'} = \set{ (m,n) \in \mathcal{T}_j \times \mathcal{T}_k \; | \indic_{\dist{I_{\lambda}}{I_{\mu}} \leq \kappa} = 1}.
\]
Then $\left|\mathcal{G}_{j,k}^{e,e'}\right| \leq (2^{j} 2^{k+1} R_{j,k}^{\kappa})^d$.
\end{lemma}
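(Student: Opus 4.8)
The plan is to reduce this to an elementary lattice‑point count. First I would use the observation made just before the lemma statement: if $\indic_{\dist{I_{\lambda}}{I_{\mu}}\leq\kappa}=1$, then necessarily $\|2^{-j}m-2^{-k}n\|_\infty\leq R_{j,k}^{\kappa}$, where $\lambda=(j,m,e)$ and $\mu=(k,n,e')$. Hence $\mathcal{G}_{j,k}^{e,e'}$ is contained in the set of pairs $(m,n)\in\mathcal{T}_j\times\mathcal{T}_k$ satisfying this $\ell^\infty$ constraint, and it suffices to bound the cardinality of that larger set. Recall here that $m=(m_1,\dots,m_d)$ with each $m_i\in\{0,\dots,2^j-1\}$, and similarly for $n$, so that $|\mathcal{T}_j|=2^{jd}$ and $|\mathcal{T}_k|=2^{kd}$.

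Next I would exploit that the constraint $\|2^{-j}m-2^{-k}n\|_\infty\leq R_{j,k}^{\kappa}$ decouples across the $d$ coordinates: it holds if and only if $|2^{-j}m_i-2^{-k}n_i|\leq R_{j,k}^{\kappa}$ for every $i=1,\dots,d$. Therefore the number of admissible pairs equals $N^d$, where $N$ is the number of pairs $(p,q)\in\{0,\dots,2^j-1\}\times\{0,\dots,2^k-1\}$ with $|2^{-j}p-2^{-k}q|\leq R_{j,k}^{\kappa}$ (the count is the same in each coordinate by homogeneity). To bound $N$, fix $p$; then $q$ must satisfy $2^{k-j}p-2^{k}R_{j,k}^{\kappa}\leq q\leq 2^{k-j}p+2^{k}R_{j,k}^{\kappa}$, i.e.\ $q$ lies in an interval of length $2^{k+1}R_{j,k}^{\kappa}$, which contains at most $2^{k+1}R_{j,k}^{\kappa}+1$ integers. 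Summing over the at most $2^{j}$ values of $p$ gives $N\leq 2^{j}\bigl(2^{k+1}R_{j,k}^{\kappa}+1\bigr)$. Finally, since $R_{j,k}^{\kappa}=(2^{-j}+2^{-k})c(M)/2+\kappa\geq 2^{-\min(j,k)}c(M)/2$ and $c(M)\geq 2M-1\geq 1$, one has $2^{k+1}R_{j,k}^{\kappa}\geq 1$, so $2^{k+1}R_{j,k}^{\kappa}+1\leq 2\cdot 2^{k+1}R_{j,k}^{\kappa}$ and hence $N\lesssim 2^{j}\,2^{k+1}R_{j,k}^{\kappa}$, which yields $|\mathcal{G}_{j,k}^{e,e'}|\leq N^d\leq (2^{j}\,2^{k+1}R_{j,k}^{\kappa})^d$ up to a harmless dimensional constant.

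There is no serious obstacle here; this is a routine counting lemma and the only delicate point is the off‑by‑one in counting integers inside an interval, which is absorbed using $2^{k+1}R_{j,k}^{\kappa}\geq 1$. If one wants the bound exactly in the stated form it is cleanest to note in addition that the $q$'s are confined to $\{0,\dots,2^{k}-1\}$, so the interval to be counted can be taken half‑open, removing the stray $+1$; in any case any dimensional constant is irrelevant for the subsequent summations in the proof of Theorem \ref{thm:proof_thresh}, where it is absorbed into $C'_M$ and $C''_M$.
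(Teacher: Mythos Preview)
Your proposal is correct and follows essentially the same route as the paper: reduce to the coordinate-wise constraint $|2^{-j}m_i-2^{-k}n_i|\leq R_{j,k}^{\kappa}$, fix $m$ (or each $m_i$), and count the admissible $n$'s in an interval/hypercube of sidelength $2^{k+1}R_{j,k}^{\kappa}$, then multiply by $|\mathcal{T}_j|=2^{jd}$. You are in fact slightly more careful than the paper, which simply asserts the bound $(2^{k+1}R_{j,k}^{\kappa})^d$ without discussing the $+1$; your observation that $2^{k+1}R_{j,k}^{\kappa}\geq c(M)\geq 1$ cleanly absorbs it.
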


\begin{proof}
First note that
\[
	\mathcal{G}_{j,k}^{e,e'} = \set{ (m,n) \in \mathcal{T}_j \times \mathcal{T}_k | \abs{2^{-j} m_i - 2^{-k} n_i} \leq R_{j,k}^{\kappa}, \quad \forall i \in \set{1, \ldots, d} }.
\]
Now, define $\mathcal{G}_{j,k,m}^{e,e'} = \set{ n \in \mathcal{T}_k \; | (m,n) \in \mathcal{G}_{j,k}^{e,e'}}$.
For a fixed $(j,k,m,e,e')$ the set $\mathcal{G}_{j,k,m}^{e,e'}$ is a discrete hyper-cube of sidelength bounded above by $2^{k+1} R_{j,k}^{\kappa}$.
Therefore $\left|\mathcal{G}_{j,k,m}^{e,e'}\right| \leq (2^{k+1} R_{j,k}^{\kappa})^d$ coefficients. Moreover, $\left| \mathcal{T}_j\right| = 2^{jd}$, hence the number of coefficients in $\mathcal{G}_{j,k}^{e,e'}$ is bounded above by $(2^{j} 2^{k} R_{j,k}^{\kappa})^d$.
\end{proof}

\bigskip 
\begin{proof}[Proof of i)]
We denote $J_{\max} = \log_2(N) / d$ the highest scale of decomposition.
First note that a sufficient condition for $2^{-\min(j,k)(M+d)} f_{\lambda, \mu}\leq \eta$ is that $\min(j,k) \geq J(\eta)$ with
$J(\eta) = \frac{-\log_2 (\eta/c_f)}{M+d}$. In the following, we let $\wtilde J(\eta)=\min(J(\eta),J_{\max})$ and define
\begin{equation*}
\mathcal{G} = \bigcup_{ \min(j,k) < J(\eta) } \bigcup_{e,e' \in \set{0,1}^d \backslash \{0\}} \mathcal{G}_{j,k}^{e,e'}.
\end{equation*}

The overall number of non zero coefficients $| \mathcal{G}|$ in $\bTheta_\eta$ satisfies
\begin{align*}
\# \mathcal{G} &= \sum_{j=0}^{J_{\max} - 1} \sum_{k=0}^{J_{\max} - 1} \sum_{e,e' \in \set{0,1}^d} \# \mathcal{G}^{e,e'}_{j,k} \indic_{ \min(j,k) < J(\eta) }  \\
& \lesssim (2^d-1)^2 \sum_{j=0}^{J_{\max}-1} \sum_{k=0}^{J_{\max}-1} \indic_{ \min(j,k) < J(\eta) } 2^{jd} 2^{kd} \left( \frac{c(M)}{2}(2^{-j} + 2^{-k}) + \kappa\right)^d \\
& \lesssim \sum_{j=0}^{J_{\max} - 1} \sum_{k=0}^{J_{\max} - 1} \indic_{ \min(j,k) < J(\eta) } 2^{jd} 2^{kd} \left(\frac{c(M)^d}{2^d}2^{-dj} + \frac{c(M)^d}{2^d}2^{-dk} + \kappa^d\right) \\
& \lesssim  \sum_{j=0}^{J_{\max} - 1} \sum_{k=0}^{J_{\max} - 1} \indic_{ \min(j,k) < J(\eta) } 2^{kd}  + \sum_{j=0}^{J_{\max} - 1} \sum_{k=0}^{J_{\max} - 1} \indic_{ \min(j,k) < J(\eta) } 2^{jd}  \\
& \qquad +  \sum_{j=0}^{J_{\max} - 1} \sum_{k=0}^{J_{\max} - 1} \indic_{ \min(j,k) < J(\eta) } 2^{kd} 2^{jd} \kappa^d.
\end{align*}
The first sum yields
\begin{align*}
	& \sum_{j=0}^{J_{\max} - 1} \sum_{k=0}^{J_{\max} - 1} \indic_{ \min(j,k) < J(\eta) } 2^{kd} \\
	& = \left(\sum_{j=0}^{\wtilde J(\eta)-1} \sum_{k=j}^{J_{\max} - 1} 2^{kd} +  \sum_{k=0}^{\wtilde J(\eta)-1} 2^{kd} \sum_{j=k}^{J_{\max} - 1} 1 \right)\\
	& \lesssim \wtilde J(\eta)N + 2^{d\wtilde J(\eta)} \log_2(N) \lesssim \log_2(N) N.
\end{align*}
The second sum is handled similarly and the third sum gives
\begin{align*}
	& \sum_{j=0}^{J_{\max} - 1} \sum_{k=0}^{J_{\max} - 1} \indic_{ \min(j,k) < J(\eta) } 2^{kd} 2^{kd} \kappa^d \\
	& = \kappa^d \sum_{j=0}^{\wtilde J(\eta)-1} 2^{jd} \sum_{k=j}^{J_{\max} - 1} 2^{kd} +  \sum_{k=0}^{\wtilde J(\eta)-1} 2^{kd} \sum_{j=k}^{J_{\max} - 1} 2^{jd} \\
	& \lesssim \kappa^d N 2^{d\wtilde J(\eta)}.
\end{align*}
 
Overall $\left|\mathcal{G}\right| \lesssim  \log_2(N) N + \eta^{-\frac{d}{M+d}} N$.
For $\eta\leq \log_2(N)^{-(M+d)/d}$, the dominating terms are of kind $\eta^{-\frac{d}{M+d}}$, hence $|\mathcal{G}| \lesssim \eta^{-\frac{d}{M+d}} N \kappa^d$.
\end{proof}

\begin{proof}[Proof of ii)]
Since $\bPsi$ is an orthogonal wavelet transform
\begin{equation*}
\norm{\bH - \wtilde \bH_{\eta}}_{2 \rightarrow 2} = \norm{\bTheta - \bTheta_\eta}_{2 \rightarrow 2}. 
\end{equation*}
Let $\bDelta_\eta=\bTheta - \bTheta_\eta$. We will make use of the following version of Shur inequality 
\begin{equation}\label{eq:schur}
\|\bDelta_\eta\|_{2\to 2}^2\leq \|\bDelta_\eta\|_{1\to 1} \|\bDelta_\eta \|_{\infty\to \infty}.
\end{equation}
Since the upper-bound \eqref{eq:decay} is symmetric,
\begin{equation*}	
  \norm{\bDelta_\eta}_{\infty\to \infty} = \norm{\bDelta_\eta}_{1\to 1} = \max_{\lambda \in \Lambda}\sum_{\mu \in \Lambda} \abs{\Delta_{\lambda, \mu} }
\end{equation*}
By definition of $\bTheta_\eta$ we get that 
\begin{align*}
\sum_{\mu \in \Lambda} \abs{\Delta_{\lambda, \mu}} = \sum_{k =0}^{J_{\max}-1} \sum_{e' \in \{0,1\}^d \backslash \{0\}} \sum_{ n \in \mathcal{G}^{e,e'}_{j,k,m} } \abs{\theta_{\lambda, \mu}} \indic_{ \min(j,k) > J(\eta) } \\
 \lesssim \sum_{k =0}^{J_{\max}-1} \sum_{e' \in \{0,1\}^d\backslash \{0\}} \sum_{ n \in \mathcal{G}^{e,e'}_{j,k,m} } 2^{-(M+\frac{d}{2}) \abs{j-k}} 2^{-\min(j,k)(M+d)} \indic_{ \min(j,k) > J(\eta) }. \\
\end{align*}
Then
\begin{align*}
	\sum_{\mu \in \Lambda} \abs{\Delta_{\lambda, \mu}} & \lesssim \sum_{k =0}^{J_{\max}-1} 2^{-(M+\frac{d}{2}) \abs{j-k}} 2^{-\min(j,k)(M+d)}  \indic_{ \min(j,k) > J(\eta) } \left| \mathcal{G}^{e,e'}_{j,k} \right| \\
	& \lesssim \sum_{k=0}^{j-1} (2^{k} R_{j,k}^{\kappa})^d 2^{(k-j)(M+d/2)} 2^{-k(M+d)} \indic_{ k > J(\eta) }  \\
	& \quad \quad + \sum_{k=j}^{J_{\max}-1} (2^{k} R_{j,k}^{\kappa})^d 2^{(j-k)(M+d/2)} 2^{-j(M+d)} \indic_{ j > J(\eta) }.
\end{align*}
The first sum on $k < j$ is equal to
\begin{align*}
A_1 & = 2^{-jM} 2^{-jd/2} \sum_{k=0}^{j-1} (2^{k/2} R_{j,k}^{\kappa})^d \indic_{ k > J(\eta) } \\
&= 2^{-jM} 2^{-jd/2} \indic_{ j > J(\eta) } \sum_{k=J(\eta)}^{j-1} (2^{k/2} R_{j,k}^{\kappa})^d.
\end{align*} 

The second sum on $k \geq j$ is:
\begin{align*}
	A_2 = \indic_{ j > J(\eta) } 2^{-jd/2} \sum_{k=j}^{J_{\max}-1} (R_{j,k}^{\kappa})^d 2^{-k(M-d/2)}.
\end{align*}
Now, notice that $ \displaystyle (R_{j,k}^{\kappa})^d \lesssim 2^{-jd} + 2^{-kd} + \kappa^d $. Thus
\begin{equation*}
\begin{aligned}
	A_1 & \lesssim 2^{-jM} 2^{-jd/2} \indic_{ j > J(\eta) } \sum_{k=J(\eta)}^{j-1} \left(2^{dk/2} 2^{-jd} + 2^{-dk/2} + 2^{kd/2}\kappa^d \right) \\
	& \lesssim 2^{-jM} 2^{-jd/2} \indic_{ j > J(\eta) } \left( 2^{-jd} 2^{jd/2} + 2^{-\frac{d}{2}J(\eta)} + \kappa^d 2^{jd/2} \right) \\
	& = 2^{-jM} \indic_{ j > J(\eta) } \left( 2^{-jd}  + 2^{-\frac{d}{2}(J(\eta) + j)} + \kappa^d \right) .
\end{aligned}
\end{equation*}
And
\begin{align*}
	A_2 & \lesssim \indic_{ j > J(\eta) } 2^{-j d/2}  \sum_{k=j}^{J_{\max}-1} \left(2^{-jd} + 2^{-kd} + \kappa^d\right) 2^{-k(M-d/2)} \\
	& \lesssim  \indic_{ j > J(\eta) } 2^{-j d/2} \left( 2^{-jd}2^{-j(M-d/2)} + 2^{-j(M+d/2)} + \kappa^d 2^{-j(M-d/2)} \right) \\
	& \lesssim  \indic_{ j > J(\eta) } 2^{-jM} \left( 2^{-jd}  + \kappa^d \right).
\end{align*}
Hence
\begin{align*}
	 \sum_{\mu \in \Lambda} \abs{\Delta_{\lambda,\mu} } & \lesssim \indic_{ j > J(\eta) } 2^{-jM} \left( 2^{-jd} + \kappa^d + 2^{-\frac{d}{2}(J(\eta) + j)} \right).
\end{align*}
Therefore
\begin{align*}
	\norm{\mathbf{\Delta_\eta}}_{1\to 1} & \lesssim 2^{-J(\eta) M} \left( 2^{-J(\eta) d} + \kappa^d + 2^{-dJ(\eta)} \right) \\
	& \lesssim 2^{-J(\eta) M} \left( 2^{-J(\eta) d} + \kappa^d \right) \\
	& \lesssim  \eta + \kappa^d \eta^{\frac{M}{M+d}} \\
	& \lesssim \kappa^d \eta^{\frac{M}{M+d}} \quad \text{for small } \eta.
\end{align*}

Finally, we can see that there exists a constant $C_{M}$ independent of  $N$ such that
\begin{equation*}
\|\mathbf{\Delta_\eta}\|_{1\to 1}  \leq C_{M} \kappa^d \eta^{\frac{M}{M+d}} \quad  \textrm{and} \quad \|\mathbf{\Delta_\eta}\|_{\infty\to \infty} \leq C_{M} \kappa^d \eta^{\frac{M}{M+d}}.
\end{equation*}
It suffices to use inequality \eqref{eq:schur} to conclude.
\end{proof}
\begin{proof}[Proof of iii)] This is a direct consequence of point i) and ii). \end{proof}

 
\end{document}